\numberwithin{equation}{section}
\newtheorem{definition}{Definition}[section]
\newtheorem{remark}[definition]{Remark}
\newtheorem{example}[definition]{Example}
\newtheorem{theorem}[definition]{Theorem}
\newtheorem{proposition}[definition]{Proposition}
\newtheorem{lemma}[definition]{Lemma}
\newtheorem{corollary}[definition]{Corollary}
\theoremstyle{remark}
\newcommand{\mbE}{\mathbb{E}}
\newcommand{\mcA}{\mathcal{A}}
\newcommand{\mcB}{\mathcal{B}}
\newcommand{\mcC}{\mathcal{C}}
\newcommand{\mcD}{\mathcal{D}}
\newcommand{\mcF}{\mathcal{F}}
\newcommand{\mcG}{\mathcal{G}}
\newcommand{\mcI}{\mathcal{I}}
\newcommand{\mcP}{\mathcal{P}}
\newcommand{\mcS}{\mathcal{S}}
\newcommand{\mcX}{\mathcal{X}}
\newcommand{\mcY}{\mathcal{Y}}
\newcommand{\mcT}{\mathcal{T}}
\newcommand{\mfC}{\mathfrak{C}}
\newcommand{\mfF}{\mathfrak{F}}
\newcommand{\mfP}{\mathfrak{P}}
\newcommand{\modu}{\mathrm{mod}}
\newcommand{\add}{\mathrm{add}}
\newcommand{\Mod}{\mathsf{Mod}}
\newcommand{\fgMod}{\mathsf{mod}}
\newcommand{\Thick}{\mathsf{Thick}}
\newcommand{\resdim}{\mathrm{resdim}}
\newcommand{\coresdim}{\mathrm{coresdim}}
\newcommand{\pd}{\mathrm{pd}}
\newcommand{\id}{\mathrm{id}}
\newcommand{\Hom}{\mathrm{Hom}}
\newcommand{\Ext}{\mathrm{Ext}}
\def\@seccntformat#1{%
  \protect\textup{\protect\@secnumfont
    \ifnum\pdfstrcmp{section}{#1}=0 \scshape\bfseries\fi
    \ifnum\pdfstrcmp{subsection}{#1}=0 \bfseries\fi
    \csname the#1\endcsname
    \protect\@secnumpunct
  }%
}
\begin{document}

\title{Cut notions in extriangulated categories related to Auslander-Buchweitz theory and cotorsion theory}
\thanks{2020 MSC: 18G25 (18G10; 18G20; 18G35)}
\thanks{Key Words: cut cotorsion pairs, $n$-cotorsion pairs, cut Frobenius pairs, cut Auslander-Buchweitz contexts.}

\author{Mindy Huerta}
\address[M. Huerta]{Facultad de Ciencias. Universidad Nacional Aut\'onoma de M\'exico. Circuito Exterior, Ciudad Universitaria. CP04510. Mexico City, MEXICO}
\email{mindyhp90@ciencias.unam.mx}

\author{Octavio Mendoza}
\address[O. Mendoza]{Instituto de Matem\'aticas. Universidad Nacional Aut\'onoma de M\'exico. Circuito Exterior, Ciudad Universitaria. CP04510. Mexico City, MEXICO}
\email{omendoza@matem.unam.mx}

\author{Corina S\'aenz}
\address[C. S\'aenz]{Facultad de Ciencias. Universidad Nacional Aut\'onoma de M\'exico. Circuito Exterior, Ciudad Universitaria. CP04510. Mexico City, MEXICO}
\email{ecsv@ciencias.unam.mx}

\author{Valente Santiago}
\address[V. Santiago]{Facultad de Ciencias. Universidad Nacional Aut\'onoma de M\'exico. Circuito Exterior, Ciudad Universitaria. CP04510. Mexico City, MEXICO}
\email{valente.santiago.v@gmail.com}

\maketitle

\begin{abstract}
In this work we introduce notions in Auslander-Buchweitz theory and cotorsion theory in extriangulated
categories which extend the given ones for abelian categories. Although these notions have been already developed for extriangulated categories in remarkable works, in this paper, we tackle them in a relative sense by considering subcategories of objects. 
This approach not only covers the existing theory given on abelian, exact and triangulated categories, but it also shows how to get similar results with an appropriate treatment of
local properties. 
\end{abstract}


\pagestyle{myheadings}
\markboth{\rightline {\scriptsize M. Huerta, O. Mendoza, C. S\'{a}enz and V. Santiago}}
         {\leftline{\scriptsize Cut notions in extriangulated categories}}

\section*{\textbf{Introduction}}

An important branch of relative homological algebra was developed
by M. Auslander and R. O. Buchweitz in \cite{ABtheory}.
This theory, called
\emph{Auslander-Buchweitz approximation theory (AB-theory, for short)}, consists of methods for obtaining right and left approximations from (co)generators of a full subcategory of an
abelian category. Thus, computing left and right approximations is a topic of interest in representation theory and homological algebra, and hence the study of techniques on
how to get these approximations by using AB-theory became relevant at this point.
Later, a technique which 
has an appealing relation with
left and right approximations, emerged. 
In \cite{GT, EJ1, EJ2} it is shown that is possible to produce left and
right approximations via \emph{complete cotorsion pairs}. Since then several generalizations have 
been given of this notion, see for instance \cite{HMP, HMPcut}. 

Since the approximation of objects is a common matter between AB-theory and cotorsion pairs theory, it is quiet natural to ask for the possible relationship between them. This question
has generated, in recent years, works
describing such interplay. An example of this can be found in \cite{BMPS} 
where Frobenius pairs and weak Auslander-Buchweitz contexts are defined and a relativization of cotorsion pairs under the name of \emph{$\mcS$-cotorsion pairs} is proposed. Moreover, it is also shown in \cite{BMPS} that AB-contexts and $\mcS$-cotorsion pairs are in one-to-one correspondence. From this approach of 
relativizing cotorsion pairs, the 
notion of \emph{cotorsion pair cut along $\mcS$} is introduced in \cite{HMPcut}.
Moreover, it is also extended the correspondence theorem for this new concept \cite[Thms. 4.6 \& 4.12]{HMPcut}.

The scope of AB-theory and cotorsion pairs is not limited to the setting of abelian categories, it can be extended to more general settings such as triangulated categories, see 
\cite{MendozaSaenzVargasSouto2} and \cite{MendozaSaenzVargasSouto}. In 
\cite{Nakaoka1}, H. Nakaoka and Y. Palu introduce the notion of extriangulated categories, which
extends well-known results on abelian, exact and triangulated categories. It is also known that AB-theory and cotorsion theory
do not scape to generalizations on this context as we can see in the remarkable works done in 
\cite{MendozaSaenzVargasSouto2, MDZtheoryAB, Hegorensteinobjects, TGone} and \cite{LNheartsoftwin, HZontherelation} for such theories, respectively.

The first goal of this work is to give a \emph{cut version} of AB-theory in extriangulated categories. In this case, "cut version" means that we adapt some existing notions of this theory by considering a relativization which depends on a class of objects $\mcS$ in an
extriangulated category $\mcC$. The purpose is also to show that some outcomes already given in the 
absolute case can be covered by taking $\mcS=\mcC$. With this in mind, the second goal is to generalize the notion of cotorsion pair in extriangulated categories
through the concept of \emph{cut $n$-cotorsion pair}. This new concept has the advantage that 
generalizes $n$-cotorsion pairs \cite[Def. 2.1]{HMP} and cut cotorsion pairs \cite[Def. 2.1]{HMPcut} at the same time. Hence, following the ideas in \cite{HMPcut} for abelian categories, it will be useful to show that there is an interplay with
the cut notions developed in the first part.

\subsection*{Organization of the paper} 

In Section 1, we recall some well-known results in extriangulated categories appearing mainly in \cite{Nakaoka1} and establish the notation that will be used for the rest of this work. In Section 2,
we carry outcomes (given originally in \cite{HMPcut} for abelian categories) to extriangulated categories. As a first contribution and for the purpose of this work, we develop in detail the proofs of some of them. Section 3 is devoted to present generalizations of left Frobenius pairs and Auslander-Buchweitz contexts called \emph{cut Frobenius 
pair} and \emph{cut Auslander-Buchweitz context} (Definitions~\ref{def: cut left Frobenius pair} 
and~\ref{def: cut left AB context}), we provide examples of such concepts and finish the section by 
proving that there exists a one-to-one correspondence between them (Theorem~\ref{theo:correspondence_1}). It is worth mentioning that, even when the proofs were done firstly for abelian categories and these
proofs can be easily adapted to extriangulated categories, in this section we rewrite and give 
alternative proofs for some parts of them. In Section 4, we introduce the notion of 
\emph{cut $n$-cotorsion pair} (Definition~\ref{def: CnCP ext}) in order to unify the concepts of cut cotorsion pairs and
$n$-cotorsion pairs for extriangulated categories. We also provide several examples in abelian, 
triangulated and extriangulated categories (Examples~\ref{ex:carcaj},~\ref{ex: sistest} and~\ref{ex: sistest ext}). In particular, we show in Example~\ref{ex: GP, P2} that cut cotorsion pairs are different to 
cotorsion pairs in extriangulated categories whose extriangulated structure is induced by subcategories closed under extensions. Furthermore, we give in Proposition~\ref{pro: 1cot<->complete cot} sufficent and neccesary conditions so that the equality holds. In the last part, we give an example
of an $n$-cotorsion pair in an extriangulated category which is neither exact nor triangulated 
(Example~\ref{ex: ni exact ni triang}). In Section 5 we give results that involved cut $1$-cotorsion 
pairs and Auslander-Buchweitz contexts in order to prove the most important result of this section, Theorem~\ref{theo:correspondence_2}. As a consequence of this, by considering a restriction on 
representatives of such correspondence, we also get Corollary~\ref{cor: leftFrob<->cotor pairs in Thick}. Finally, Section 6 is devoted to show how the notions introduced in the previous sections can
be applied in the case of triangulated categories and the so-called  
co-$t$-structures (Theorems~\ref{thm: t-str y cut cot} and~\ref{thm: co-t-st y cut cot}).


\subsection*{Conventions}

Throughout the paper, $\mcC$ denotes an additive category. Among the main examples considered in this work are the following ones:
\begin{itemize}
\item $\Mod(R)$ which is the category of left $R$-modules over an associative ring $R$ with identity. 

\item $\fgMod(\Lambda)$ which is the category of finitely generated left $\Lambda$-modules over an Artin algebra $\Lambda$.  
\end{itemize}

We write $\mcS \subseteq \mcC$ to denote that $\mcS$ is a full subcategory of $\mcC$ or a class of objects in $\mcC$. All the subcategories of $\mcC$ are assumed to be full. Given $X, Y \in \mcC$, we denote by $\mcC(X,Y)$ the group of morphisms from $X$ to  $Y$. Monomorphisms and epimorphisms in $\mcC$ are denoted by using the arrows $\rightarrowtail$ and $\twoheadrightarrow$, respectively. In case $X$ and $Y$ are isomorphic, we write $X \simeq Y$. The notation $F \cong G$, on the other hand, is reserved to denote the existence of a natural isomorphism between functors $F$ and $G$.


\section{\textbf{Preliminaries}}\label{sec:preliminaries}

\subsection*{Extriangulated categories and terminology}

We begin with some definitions and results related to extriangulated categories. 
For a detailed treatise on this matter we refer to \cite{Nakaoka1, LNheartsoftwin, MDZtheoryAB}. We recall that $\mcC$ denotes an additive category.

\begin{definition}\cite[Def. 2.1 \& Rmk. 2.2]{Nakaoka1}
Let $\mathbb{E}:\mcC^{op}\times \mcC\to \mathrm{Ab}$ be an additive bifunctor. An $\mathbb{E}$-extension is a triplet $(A, \delta, C),$ where $A, C\in \mcC$ and $\delta\in \mathbb{E}(C, A).$ 
For any $a\in \mcC(A, A')$
and $c\in \mcC(C', C)$, we have $\mathbb{E}$-extensions $a\delta:=\mathbb{E}(C, a)(\delta)\in \mathbb{E}(C, A')$ and
$\delta c:=\mathbb{E}(c, A)(\delta)\in \mathbb{E}(C', A)$. In this terminology, we have
$(a\delta)c=a(\delta c)$ in $\mathbb{E}(C', A')$.
\end{definition}

\begin{definition}\cite[Def. 2.3]{Nakaoka1}
Let $(A, \delta, C)$ and  $(A', \delta', C')$ be $\mathbb{E}$-extensions. A morphism $(a, c): 
(A, \delta, C)\to (A', \delta', C')$ of $\mathbb{E}$-extensions is a pair of morphisms $a\in \mcC(A, A')$ and
$c\in \mcC(C, C')$ in $\mcC$, satisfying the equality $a\delta=\delta' c.$
We simply denote it as $(a, c): \delta\to \delta'$.
We obtain the category $\mathbb{E}$-$\Ext(\mcC)$ of $\mathbb{E}$-extensions, with composition and identities
naturally induced by those in $\mcC$.
\end{definition}

\begin{definition}\cite[Def. 2.5]{Nakaoka1}
For any $A, C\in \mcC$, the zero element $0\in \mathbb{E}(C, A)$ is called the split $\mathbb{E}$-extension.
\end{definition}

\begin{definition}\cite[Def. 2.6]{Nakaoka1}
Let $\delta=(A, \delta, C)$ and $\delta'=(A', \delta', C')$ be $\mathbb{E}$-extensions. Let 
$C\mathop{\to}\limits^{\iota_{C}} C\oplus C\mathop{\leftarrow}\limits^{\iota_{C'}} C'$ and 
$A\mathop{\leftarrow}\limits^{p_{A}} A\oplus A'\mathop{\to}\limits^{p_{A'}}A'$ be the coproduct and product in $\mcC$,
respectively. We remark that, by the biadditivity of $\mathbb{E}$, we have a natural isomorphism
$$\mathbb{E}(C\oplus C', A\oplus A')\cong \mathbb{E}(C, A)\oplus \mathbb{E}(C, A')\oplus 
\mathbb{E}(C', A)\oplus \mathbb{E}(C', A').$$
Let $\delta\oplus \delta'\in \mathbb{E}(C\oplus C', A\oplus A')$ be the element corresponding to $(\delta, 0, 0, 
\delta')$ through this isomorphism.

If $A=A'$ and $C=C'$, then the sum $\delta+\delta'\in \mathbb{E}(C, A)$ of $\delta, \delta'\in 
\mathbb{E}(C, A)$ is obtained by 
$$\delta+\delta'=\nabla_{A}(\delta\oplus \delta')\Delta_{C}$$
where $\Delta_{C}=\left( 
\begin{array}{c}
1_C\\1_C
\end{array}
\right) : C\to C\oplus C$ and $\nabla_{A}=(1_A\, 1_A): A\oplus A\to A$.
\end{definition}

\begin{definition}\cite[Def. 2.7]{Nakaoka1}
Let $A, C\in \mcC$ be any pair of objects. Two sequences of morphisms $A\mathop{\to}\limits^{x} B
\mathop{\to}\limits^{y} C$ and $A\mathop{\to}\limits^{x'} B'\mathop{\to}\limits^{y'} C$ in $\mcC$ are said to be
equivalent if there exists an isomorphism $b\in \mcC(B, B')$ which makes the following diagram commutative
\[
\xymatrix@R=4mm{
& B\ar[dr]^{y}\ar[dd]^{b}_{\wr} &\\
A\ar[ur]^{x}\ar[dr]_{x'} & & C.\\
& B'\ar[ur]_{y'} & 
}
\]
We denote the equivalence class of $A\mathop{\to}\limits^{x} B\mathop{\to}\limits^{y} C$ by 
$[A\mathop{\to}\limits^{x} B\mathop{\to}\limits^{y} C]$.
\end{definition}

\begin{definition}\cite[Def. 2.8]{Nakaoka1}
For any two classes $[A\mathop{\to}\limits^{x} B\mathop{\to}\limits^{y} C]$ and
$[A'\mathop{\to}\limits^{x'} B'\mathop{\to}\limits^{y'} C']$, we set
$[A\mathop{\to}\limits^{x} B\mathop{\to}\limits^{y} C]\oplus [A'\mathop{\to}\limits^{x'} B'\mathop{\to}\limits^{y'} C']:=[A\oplus A'\mathop{\to}\limits^{x\oplus x'} B\oplus B'\mathop{\to}\limits^{y\oplus
y'} C\oplus C'].$
\end{definition}

\begin{definition}\cite[Def. 2.9]{Nakaoka1}\label{def 2.9}
Let $\mathfrak{s}$ be a correspondence which associates to each $\mathbb{E}$-extension $\delta\in \mathbb{E}(C, A)$ an equivalence class $\mathfrak{s}(\delta)=
[A\mathop{\to}\limits^{x} B\mathop{\to}\limits^{y} C]$. This $\mathfrak{s}$ is called a realization of $\mathbb{E}$ if it satisfies the following condition:

$(*)$ Let $\delta\in \mathbb{E}(C, A)$ and $\delta'\in \mathbb{E}(C', A')$ be any pair of $\mathbb{E}$-extensions, with $\mathfrak{s}(\delta)=[A\mathop{\to}\limits^{x} B\mathop{\to}\limits^{y} C]$ and
$\mathfrak{s}(\delta')=[A'\mathop{\to}\limits^{x'} B'\mathop{\to}\limits^{y'} C']$. Then, for any morphism 
$(a, c)\in \mathbb{E}\mbox{-}\Ext(\mcC)(\delta, \delta')$, there exists $b\in \mcC(B, B')$ which makes the 
following diagram commutative
\begin{equation}\label{eq: diag1}
\xymatrix{
A\ar[r]^{x}\ar[d]_{a} & B\ar[r]^{y}\ar[d]^{b} & C\ar[d]^{c}\\
A'\ar[r]_{x'} & B'\ar[r]_{y'} & C'.
}
\end{equation}
We say that the sequence $A\mathop{\to}\limits^{x} B\mathop{\to}\limits^{y} C$ realizes $\delta$,
whenever it satisfies $\mathfrak{s}(\delta)=[A\mathop{\to}\limits^{x} B\mathop{\to}\limits^{y} C]$. We remark that this condition does not depend on the choices of the representatives of the equivalence classes. 
In the above situation, we say that \eqref{eq: diag1} (or the triplet $(a, b, c)$) realizes $(a, c)$.
\end{definition}

\begin{definition}\cite[Def. 2.10]{Nakaoka1}
A realization $\mathfrak{s}$ of $\mathbb{E}$ is additive if it satisfies the following two conditions:
\begin{enumerate}
\item $\mathfrak{s}(0)=\left[A\mathop{\to}\limits^{\tiny{\left(
\begin{array}{c}
1\\ 0
\end{array}
\right)}} A\oplus C\mathop{\to}\limits^{(0\, 1)} C
\right]$ for any $A, C\in \mcC;$
\item $\mathfrak{s}(\delta\oplus \delta')=\mathfrak{s}(\delta)\oplus \mathfrak{s}(\delta')$, for
any $\mathbb{E}$-extensions $\delta$ and $\delta'$.
\end{enumerate}
\end{definition}

\begin{definition}\cite[Def. 2.12]{Nakaoka1}
The pair $(\mathbb{E}, \mathfrak{s})$ is an external triangulation of $\mcC$ if it satisfies the following
conditions.
\begin{enumerate}
\item[(ET1)] $\mathbb{E}: \mcC^{op}\times \mcC\to \mathrm{Ab}$ is an additive bifunctor.
\item[(ET2)] $\mathfrak{s}$ is an additive realization of $\mathbb{E}$.
\item[(ET3)] Let $\delta\in \mathbb{E}(C, A)$ and $\delta'\in \mathbb{E}(C', A')$ be any pair of $\mathbb{E}$-extensions, realized as $\mathfrak{s}(\delta)=[A\mathop{\to}\limits^{x} B\mathop{\to}\limits^{y} C]$ and
$\mathfrak{s}(\delta')=[A'\mathop{\to}\limits^{x'} B'\mathop{\to}\limits^{y'} C']$. For any commutative square
in $\mcC$
\[
\xymatrix{
A\ar[r]^{x}\ar[d]_{a} & B\ar[r]^{y}\ar[d]^{b} & C\\
A'\ar[r]_{x'} & B'\ar[r]_{y'} & C',
}
\]
there exists a morphism $(a, c): \delta\to \delta'$ which is realized by $(a, b, c)$.
\item[(ET3)$^{op}$] Let $\delta\in \mathbb{E}(C, A)$ and $\delta'\in \mathbb{E}(C', A')$ be any pair of 
$\mathbb{E}$-extensions, realized by $A\mathop{\to}\limits^{x} B\mathop{\to}\limits^{y} C$ and 
$A'\mathop{\to}\limits^{x'} B'\mathop{\to}\limits^{y'} C'$, respectively. For any commutative square in $\mcC$
\[
\xymatrix{
A\ar[r]^{x} & B\ar[r]^{y}\ar[d]_{b} & C\ar[d]^{c}\\
A'\ar[r]_{x'} & B'\ar[r]_{y'} & C'
}
\]
there exists a morphism $(a, c): \delta\to \delta'$ which is realized by $(a, b, c)$.

\item[(ET4)] Let $(A, \delta, D)$ and $(B, \delta', F)$ be $\mathbb{E}$-extensions, respectively realized by
$A\mathop{\to}\limits^{f} B\mathop{\to}\limits^{f'} D$ and $B\mathop{\to}\limits^{g} C\mathop{\to}\limits^{g'}
F$. Then there exist an object $E\in \mcC$, a commutative diagram
\[
\xymatrix{
A\ar@{=}[d]\ar[r]^{f} & B\ar[r]^{f'}\ar[d]_{g} & D\ar[d]^{d}\\
A\ar[r]_{h} & C\ar[r]_{h'}\ar[d]_{g'} & E\ar[d]^{e}\\
& F\ar@{=}[r] & F
}
\]
in $\mcC$, and an $\mathbb{E}$-extension $\delta''\in \mathbb{E}(E, A)$ realized by $A\mathop{\to}\limits^{h} C
\mathop{\to}\limits^{h'} E$, which satisfy $\mathfrak{s}(f'\delta ')=[D\mathop{\to}\limits^{d} E\mathop{\to}\limits^{e} F]$, $\delta''d=\delta$ and $f\delta''=\delta'e$.

\item[(ET4)$^{op}$] Let $(D, \delta, B)$ and $(F, \delta', C)$ be $\mathbb{E}$-extensions realized by $D\mathop{\to}\limits^{f'} A\mathop{\to}\limits^{f} B$ and
$F\mathop{\to}\limits^{g'} B\mathop{\to}\limits^{g} C$, respectively. Then there
exist an object $E\in \mcC$, a commutative diagram
\[
\xymatrix{
D\ar@{=}[d]\ar[r]^{d} & E\ar[r]^{e}\ar[d]_{h'} & F\ar[d]^{g'}\\
D\ar[r]_{f'} & A\ar[r]_{f}\ar[d]_{h} & B\ar[d]^{g}\\
& C\ar@{=}[r] & C
}
\]
in $\mcC$ and an $\mathbb{E}$-extension $\delta''\in \mathbb{E}(C, E)$ realized by
$E\mathop{\to}\limits^{h'} A\mathop{\to}\limits^{h} C$ which satisfy $\mathfrak{s}(\delta g')=[D\mathop{\to}\limits^{d} E\mathop{\to}\limits^{e} F]$, $\delta'=e\delta''$ and $d\delta=\delta''g$.
\end{enumerate}
If the above conditions hold true, we call $\mathfrak{s}$ an $\mathbb{E}$-triangulation of $\mcC$, and call the triplet
$(\mcC, \mathbb{E}, \mathfrak{s})$ an externally triangulated category, or for short, extriangulated 
category. Sometimes, for the sake of
simplicity, we only write $\mcC$ instead of $(\mcC, \mathbb{E}, \mathfrak{s}).$ 
\end{definition}

\begin{definition}\cite[Def. 2.15]{Nakaoka1}
Let $(\mcC, \mathbb{E}, \mathfrak{s})$ be a triplet satisfying (ET1) and (ET2).
\begin{enumerate}
\item A sequence $A\mathop{\to}\limits^{x} B\mathop{\to}\limits^{y} C$ is called a conflation if it
realizes some $\mathbb{E}$-extension $\delta\in \mathbb{E}(C, A)$.
\item A morphism $f\in\mcC(A, B)$ is called an inflation if it admits some conflation 
$A\mathop{\to}\limits^{f} B\to C$.
\item A morphism $f\in\mcC(A,B)$ is called a deflation if it admits some conflation $K\to A\mathop{\to}\limits^{f} B$.
\end{enumerate}
\end{definition}

\begin{definition}\cite[Def. 2.17]{Nakaoka1} Let $\mcC$ be an extriangulated category. A subcategory 
$\mcD\subseteq 
\mcC$ is \emph{closed under extensions} if, for any conflation $A\to B\to C$ with $A, C\in \mcD$, we have $B\in \mcD$.
\end{definition}

\begin{definition}\cite[Def. 2.19]{Nakaoka1}
Let $(\mcC, \mathbb{E}, \mathfrak{s})$ be a triplet satisfying (ET1) and (ET2).
\begin{enumerate}
\item If $A\mathop{\to}\limits^{x} B\mathop{\to}\limits^{y} C$ realizes $\delta\in \mathbb{E}(C, A)$, we call the pair $(A\mathop{\to}\limits^{x} B\mathop{\to}\limits^{y} C, \delta)$ an $\mathbb{E}$-triangle,
and we write it in the following way 
$$
A\mathop{\to}\limits^{x} B\mathop{\to}\limits^{y} C\mathop{\dashrightarrow}^{\delta}
$$
\item Let $A\mathop{\to}\limits^{x} B\mathop{\to}\limits^{y} C\mathop{\dashrightarrow}\limits^{\delta}$ and
$A'\mathop{\to}\limits^{x'} B'\mathop{\to}\limits^{y'} C'\mathop{\dashrightarrow}\limits^{\delta'}$ be  $\mathbb{E}$-triangles. If a triplet $(a, b, c)$ realizes $(a, c): \delta\to \delta'$ as in (1), then we write
it as
\[
\xymatrix{
A\ar[r]^{x}\ar[d]_{a} & B\ar[r]^{y}\ar[d]^{b} & C\ar@{-->}[r]^{\delta}\ar[d]^{c} &\\
A'\ar[r]_{x'} & B'\ar[r]_{y'} & C'\ar[r]_{\delta'} &
}
\]
and we call $(a, b, c)$ a morphism of $\mathbb{E}$-triangles.
\end{enumerate}
\end{definition}

\begin{definition}\cite[Def. 3.1]{Nakaoka1}
Let $\mathbb{E}:\mcC^{op}\times\mcC\to Ab$ be an additive bifunctor.
By Yoneda's lemma, any $\mathbb{E}$-extension
$\delta\in \mathbb{E}(C, A)$ induces natural
transformations $\delta_{\#}: \mcC(-,C)\rightarrow \mathbb{E}(-,A)$ and 
$\delta^{\#}: \mcC(A, -)\rightarrow \mathbb{E}(C,-)$. For any $X\in \mcC$, these $(\delta_{\#})_{X}$ and $\delta^{\#}_{X}$ are given as follows
\begin{enumerate}
\item $(\delta_{\#})_{X}: \mcC(X, C)\to 
\mathbb{E}(X, A); f\mapsto \delta f$.
\item $\delta^{\#}_{X}: \mcC(A, X)\to \mathbb{E}(C, X); g\mapsto g\delta$.
\end{enumerate}
We abbreviately denote $(\delta_{\#})_{X}(f)$
and $\delta^{\#}_{X}(g)$ by $\delta_{\#}f$ and
$\delta^{\#}g$, respectively.
\end{definition}

\begin{corollary}\cite[Cor. 3.12]{Nakaoka1} 
Let $\mcC$ be an extriangulated category. For any 
$\mathbb{E}$-triangle $A\mathop{\to}\limits^{x} B\mathop{\to}\limits^{y} C \mathop{\dashrightarrow}\limits^{\delta}$, we have the 
following exact sequences of additive functors
$$\mcC(C,-)\mathop{\longrightarrow}\limits^{\mcC(y,-)} \mcC(B,-)
\mathop{\longrightarrow}\limits^{\mcC(x,-)} \mcC(A,-)\mathop{\longrightarrow}\limits^{\delta^{\#}} \mathbb{E}(C,-)
\mathop{\longrightarrow}\limits^{\mathbb{E}(y,-)} \mathbb{E}(B,-)
\mathop{\longrightarrow}\limits^{\mathbb{E}(x,-)} \mathbb{E}(A,-),$$
$$\mcC(-,A)\mathop{\longrightarrow}\limits^{\mcC(-,x)} \mcC(-,B)
\mathop{\longrightarrow}\limits^{\mcC(-,y)} \mcC(-,C)\mathop{\longrightarrow}\limits^{\delta_{\#}} \mathbb{E}(-,A)
\mathop{\longrightarrow}\limits^{\mathbb{E}(-,x)} \mathbb{E}(-,B)
\mathop{\longrightarrow}\limits^{\mathbb{E}(-,y)} \mathbb{E}(-,C).$$
\end{corollary}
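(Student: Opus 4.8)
The plan is to prove exactness of these sequences of additive functors \emph{objectwise}: a sequence of additive functors $\mcC\to\mathrm{Ab}$ is exact exactly when, after evaluating at every object, the resulting sequence of abelian groups is exact. Before doing any work I would also observe that the two displayed sequences are exchanged by passing to the opposite extriangulated category $(\mcC^{op},\mathbb{E}^{op},\mathfrak{s}^{op})$ --- whose axioms are precisely the ``$op$'' halves of (ET1)--(ET4) already listed --- under which our $\mathbb{E}$-triangle $A\xrightarrow{x}B\xrightarrow{y}C\dashrightarrow^{\delta}$ becomes an $\mathbb{E}^{op}$-triangle with $x$ and $y$ interchanged and the two variables of $\mathbb{E}$ swapped. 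Hence it suffices to prove the second sequence for an arbitrary extriangulated category and a fixed test object $W\in\mcC$; the first then follows by specializing this to $\mcC^{op}$. Concretely I must check exactness of the second sequence at the four spots $\mcC(W,B)$, $\mcC(W,C)$, $\mathbb{E}(W,A)$ and $\mathbb{E}(W,B)$.

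First I would dispose of the ``complex'' half, i.e. that consecutive composites vanish. Unwinding the definitions of $\delta_{\#}$ and $\delta^{\#}$, the relevant vanishings reduce to the identities $yx=0$, $\mathbb{E}(y,A)(\delta)=0$ and $\mathbb{E}(C,x)(\delta)=0$; each says that $\delta$, when pulled back along its own deflation $y$ (resp. pushed out along its own inflation $x$), becomes split, and each is obtained by comparing the $\mathbb{E}$-triangle of $\delta$ with a split one via (ET3)/(ET3$^{op}$) together with the additivity of $\mathfrak{s}$. The reverse inclusions are the substance. For exactness at $\mcC(W,B)$ (the weak-kernel property of $x$): given $g\in\mcC(W,B)$ with $yg=0$, I would apply (ET3$^{op}$) to the right-hand square comparing the split triangle $W\xrightarrow{1}W\to 0$ with the triangle of $\delta$, using legs $g$ and $0$; the morphism of $\mathbb{E}$-triangles it returns provides $a\in\mcC(W,A)$ with $xa=g$. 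For exactness at $\mcC(W,C)$: if $c\in\mcC(W,C)$ satisfies $\delta c=0$, then $\delta c$ is realized by the split conflation, and the morphism of $\mathbb{E}$-triangles furnished by the realization condition from the triangle of $\delta c$ to that of $\delta$ (covering $(1_A,c)$) yields, after precomposing its middle leg with the split section $W\to A\oplus W$, a map $g\in\mcC(W,B)$ with $yg=c$.

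For exactness at $\mathbb{E}(W,A)$ I would realize a class $\rho$ with $\mathbb{E}(W,x)(\rho)=x\rho=0$ by $A\xrightarrow{a}M\xrightarrow{b}W$; the morphism of $\mathbb{E}$-triangles realizing $(x,1_W)\colon\rho\to x\rho$ has split target, so the split inflation in its bottom row admits a retraction $r$, and $m_{1}:=r\circ m$ (where $m$ is the middle leg) satisfies $m_{1}a=x$. Feeding the left-hand square $(1_A,m_{1})$ from the triangle of $\rho$ to that of $\delta$ into (ET3) returns an extension morphism whose defining equation reads exactly $\rho=\delta c'$ for some $c'\in\mcC(W,C)$, so $\rho\in\operatorname{Im}\delta_{\#}$.

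The main obstacle is exactness at $\mathbb{E}(W,B)$, the only spot where no rotation is available and the octahedral axiom is indispensable. Given $\sigma\in\mathbb{E}(W,B)$ with $\mathbb{E}(W,y)(\sigma)=y\sigma=0$, I would realize $\sigma$ by $B\xrightarrow{s}N\xrightarrow{t}W$ and observe that $x$ and $s$ form a composable pair of inflations $A\xrightarrow{x}B\xrightarrow{s}N$ whose associated extensions are $\delta$ and $\sigma$. Applying (ET4) to this pair produces an object $E$, maps $d\colon C\to E$ and $e\colon E\to W$, and an $\mathbb{E}$-extension $\delta''\in\mathbb{E}(E,A)$ with $\delta''d=\delta$ and $x\delta''=\sigma e$, where moreover $\mathfrak{s}(y\sigma)=\mathfrak{s}(0)$ forces $E\simeq C\oplus W$ with $d,e$ the split inclusion and projection. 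Decomposing $\delta''$ accordingly and restricting $x\delta''=\sigma e$ along the inclusion $\iota_{W}\colon W\to E$ gives $x\tau=\sigma$ with $\tau:=\delta''\iota_{W}\in\mathbb{E}(W,A)$, as required. I expect the bookkeeping of this octahedron --- keeping straight the identifications forced by the splitting of $y\sigma$ and the two compatibility equations $\delta''d=\delta$ and $x\delta''=\sigma e$ of (ET4) --- to be the one genuinely delicate point; everything else rests only on (ET3), (ET3$^{op}$), the realization condition and additivity of $\mathfrak{s}$.
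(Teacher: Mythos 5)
This statement is imported verbatim from Nakaoka--Palu (\cite[Cor.\ 3.12]{Nakaoka1}); the paper under review gives no proof of its own, so the comparison is with the standard argument from that source. Your proposal is correct and reconstructs essentially that argument: reduction to the covariant sequence via the opposite extriangulated structure, objectwise verification, (ET3)/(ET3$^{op}$) plus the realization condition for the three $\mcC$-- and first $\mathbb{E}$--spots, and (ET4) with the splitting of $y\sigma$ for exactness at $\mathbb{E}(W,B)$ --- all four of your arguments check out. The only point left genuinely schematic is the vanishing of the consecutive composites, i.e.\ $yx=0$, $\delta y=0$ and $x\delta=0$; this does follow as you indicate, e.g.\ by applying (ET3) to the left-hand square between the split $\mathbb{E}$-triangle $A\to A\oplus B\to B$ and that of $\delta$ with vertical legs $1_A$ and $(x\ 1_B)$, which forces the returned morphism $c$ to equal $y$ and simultaneously yields $yx=0$ and $\delta y=0$, with $x\delta=0$ dual.
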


\begin{proposition}\cite[Prop. 3.15]{Nakaoka1}\label{Nakaoka 3.15}
Let $\mcC$ be an 
extriangulated category. The following (and its dual)
holds true.

Let $C\in \mcC$ and $A_1\mathop{\to}\limits^{x_1} B_{1}\mathop{\to}\limits^{y_1} C\mathop{\dashrightarrow}\limits^{\delta_1}$, 
$A_2\mathop{\to}\limits^{x_2} B_{2}\mathop{\to}\limits^{y_2} C\mathop{\dashrightarrow}\limits^{\delta_2}$ be any pair of
$\mathbb{E}$-triangles. Then there exist an object $M\in \mcC$ and a commutative diagram in $\mcC$
\[ 
\xymatrix{
& A_2\ar@{=}[r]\ar[d]_{m_2} & A_2\ar[d]^{x_2}\\
A_1\ar[r]^{m_1}\ar@{=}[d] & M\ar[r]^{e_1}\ar[d]_{e_2} & B_2\ar[d]^{y_2}\\
A_1\ar[r]_{x_1} & B_1\ar[r]_{y_1} & C
}
\]
 which satisfy $\mathfrak{s}(\delta_1 y_2)=[A_1\mathop{\to}\limits^{m_1} M\mathop{\to}\limits^{e_1} B_2]$, $\mathfrak{s}(\delta_2 y_1)=[A_2\mathop{\to}\limits^{m_2} M\mathop{\to}\limits^{e_2} B_1]$ and $m_1\delta_1+m_2\delta_2=0$. 
\end{proposition}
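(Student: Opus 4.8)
The plan is to realize $M$ as a homotopy pullback of the two deflations $y_1$ and $y_2$ over $C$, and then to read off the two lateral conflations from the realization axiom and the octahedral axiom \textup{(ET4)}$^{\mathrm{op}}$. Concretely, I would start from the biadditive extension $\delta_1\oplus\delta_2\in\mathbb{E}(C\oplus C,\,A_1\oplus A_2)$, realized by $A_1\oplus A_2\xrightarrow{x_1\oplus x_2}B_1\oplus B_2\xrightarrow{y_1\oplus y_2}C\oplus C$, and pull it back along the diagonal $\Delta_C\colon C\to C\oplus C$ of \cite[Def.~2.6]{Nakaoka1}. Realizing $(\delta_1\oplus\delta_2)\Delta_C\in\mathbb{E}(C,A_1\oplus A_2)$ gives a conflation $A_1\oplus A_2\xrightarrow{\iota}M\xrightarrow{\bar e}C$, and applying the realization property $(*)$ of \cite[Def.~2.9]{Nakaoka1} to the morphism $(1_{A_1\oplus A_2},\Delta_C)$ of $\mathbb{E}$-extensions produces a map $p\colon M\to B_1\oplus B_2$ with $(y_1\oplus y_2)\,p=\Delta_C\,\bar e$ and $p\,\iota=x_1\oplus x_2$. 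Writing $p=\binom{e_2}{e_1}$ and $\iota=(m_1\ m_2)$, these identities unwind to $e_1m_1=0$, $e_2m_2=0$, $e_2m_1=x_1$, $e_1m_2=x_2$ and $y_1e_2=y_2e_1\,(=\bar e)$, which are exactly the commutativities demanded by the target diagram.

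The extension relation $m_1\delta_1+m_2\delta_2=0$ then falls out of this construction. Since $A_1\oplus A_2\xrightarrow{\iota}M\xrightarrow{\bar e}C$ realizes $(\delta_1\oplus\delta_2)\Delta_C$, pushing its defining extension forward along its own inflation $\iota$ vanishes: evaluating the lower exact sequence of \cite[Cor.~3.12]{Nakaoka1} at $C$ gives $\iota_\ast\bigl[(\delta_1\oplus\delta_2)\Delta_C\bigr]=0$ in $\mathbb{E}(C,M)$. On the other hand, a direct computation with the biadditivity isomorphism of \cite[Def.~2.6]{Nakaoka1} identifies $(\delta_1\oplus\delta_2)\Delta_C$ with $j_1\delta_1+j_2\delta_2$, where $j_1,j_2$ are the canonical inclusions into $A_1\oplus A_2$; since $\iota j_1=m_1$ and $\iota j_2=m_2$, functoriality of the pushforward gives $\iota_\ast\bigl[(\delta_1\oplus\delta_2)\Delta_C\bigr]=m_1\delta_1+m_2\delta_2$. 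Comparing the two displays yields $m_1\delta_1+m_2\delta_2=0$.

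The genuinely delicate point, and the step I expect to be the main obstacle, is to show that the two lateral sequences $A_1\xrightarrow{m_1}M\xrightarrow{e_1}B_2$ and $A_2\xrightarrow{m_2}M\xrightarrow{e_2}B_1$ are conflations realizing precisely $\delta_1y_2$ and $\delta_2y_1$. The diagonal construction delivers $M$ together with the deflation $\bar e$ onto $C$, but it does not by itself certify that $e_1$ and $e_2$ are deflations with the prescribed fibres, nor that their realization classes are the pullbacks $\delta_1y_2$ and $\delta_2y_1$. To pin these down I would proceed symmetrically: realizing $\delta_1y_2$ directly produces a conflation $A_1\to M_0\xrightarrow{e_1^{0}}B_2$ whose middle row realizes $\delta_1y_2$ by construction, and whose comparison map to $B_1$ arises from applying $(*)$ to the morphism $(1_{A_1},y_2)\colon\delta_1y_2\to\delta_1$; one then identifies $M_0$ with $M$ through the homotopy-pullback property of the square $y_1e_2=y_2e_1$, transporting the realization. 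Dually, a single application of \textup{(ET4)}$^{\mathrm{op}}$ to the composable deflations $M\xrightarrow{e_1}B_2\xrightarrow{y_2}C$, followed by matching the resulting octahedral data against $\delta_1$ and $\delta_2$ via the compatibility relations $\delta''d=\delta$ and $e\delta''=\delta'$, yields the second lateral conflation $A_2\xrightarrow{m_2}M\xrightarrow{e_2}B_1$ together with its realization $\delta_2y_1$. The careful bookkeeping required to keep all these identifications consistent — in particular to control the realization \emph{classes} and not merely the underlying objects — is where the real work of the proof lies.
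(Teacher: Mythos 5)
A preliminary remark: the paper does not prove this statement at all — it is imported verbatim from \cite[Prop.\ 3.15]{Nakaoka1} as a preliminary — so the only proof to measure yours against is Nakaoka--Palu's original one. The first two thirds of your argument are correct and clean: realizing $\theta:=(\delta_1\oplus\delta_2)\Delta_C=j_1\delta_1+j_2\delta_2\in\mathbb{E}(C,A_1\oplus A_2)$ as $A_1\oplus A_2\xrightarrow{\iota}M\xrightarrow{\bar e}C$, applying the realization axiom to $(1_{A_1\oplus A_2},\Delta_C)$ to get $p=\binom{e_2}{e_1}$, and reading off the commutativities and $m_1\delta_1+m_2\delta_2=\iota\theta=0$ from \cite[Cor.\ 3.12]{Nakaoka1} all work. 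Note that this runs the original proof backwards: Nakaoka--Palu first realize $\delta_1y_2$ as $A_1\xrightarrow{m_1}M\xrightarrow{e_1}B_2$, obtain $e_2$ from the morphism $(1_{A_1},y_2)\colon\delta_1y_2\to\delta_1$, and only then manufacture the conflation $A_1\oplus A_2\to M\to C$ via (ET4)$^{\mathrm{op}}$ and the vanishing $(\delta_1y_2)x_2=\delta_1(y_2x_2)=0$.

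The gap is exactly where you locate it, and your sketch does not close it. First, you propose to identify your $M$ with a realization $M_0$ of $\delta_1y_2$ ``through the homotopy-pullback property of the square $y_1e_2=y_2e_1$''; that weak universal property is a consequence of this very proposition (and of the long exact sequences attached to the conflations it produces), so invoking it here is circular. Second, you propose to apply (ET4)$^{\mathrm{op}}$ to ``the composable deflations $M\xrightarrow{e_1}B_2\xrightarrow{y_2}C$''; but at that stage $e_1$ is merely a morphism supplied by the realization axiom, and the claim that it is a deflation with fibre $A_1$ is part of what is being proved, so the hypotheses of (ET4)$^{\mathrm{op}}$ are not available. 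The step can be completed from your starting point without either move: apply (ET4) to the split conflation $A_2\xrightarrow{j_2}A_1\oplus A_2\xrightarrow{q_1}A_1$ (realizing $0$) followed by $A_1\oplus A_2\xrightarrow{\iota}M\xrightarrow{\bar e}C$ (realizing $\theta$). This yields a conflation $A_2\xrightarrow{\iota j_2=m_2}M\xrightarrow{h'}E$ realizing some $\delta''$, together with a conflation $A_1\to E\to C$ realizing $q_1\theta=\delta_1$; hence $E\simeq B_1$ compatibly with $x_1$ and $y_1$, and the (ET4) compatibilities $j_2\delta''=\theta e$, $\delta_1y_1=0$ and the split monicity of $\mathbb{E}(B_1,j_2)$ force $\delta''$ to correspond to $\delta_2y_1$ under this identification, while the induced map $M\to B_1$ satisfies the same commutativities as $e_2$ and may replace it. The symmetric application of (ET4) with $q_2$ gives the other lateral conflation. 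Without an argument of this kind, your proposal establishes the diagram and the relation $m_1\delta_1+m_2\delta_2=0$, but not that the two lateral sequences are conflations realizing $\delta_1y_2$ and $\delta_2y_1$.
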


\subsection*{(Co)Resolution dimensions in extriangulated categories}
We recall some definitions in Auslander-Buchweitz theory in
extriangulated categories. These concepts originally appeared in \cite{ABtheory} and have been
adapted to several contexts as in \cite{MendozaSaenzVargasSouto2, MDZtheoryAB}, 
for instance.

\begin{definition} Let $\mcC$ be an extriangulated category. An {\rm  acyclic  $\mathbb{E}$-triangle sequence} is a pair $(C_{\bullet}, Z_\bullet(C_{\bullet})),$ where $C_{\bullet}$ is a sequence 
$\cdots \to C_{n+1}\mathop{\to}\limits^{d_{n+1}} C_{n}\mathop{\to}\limits^{d_{n}} C_{n-1}\to \cdots$
in $\mcC$ and $Z_\bullet(C_{\bullet})$ is a family of $\mathbb{E}$-triangles 
$\{Z_{n+1}(C_{\bullet})\mathop{\to}\limits^{f_{n}} C_{n}
\mathop{\to}\limits^{g_{n}} Z_{n}(C_{\bullet})\dashrightarrow\}_{n\in\mathbb{Z}}$
satisfying that  $f_{n}g_{n+1}=d_{n+1}$ $\forall\,n\in\mathbb{Z}.$ Notice that $C_{\bullet}$ is a chain complex and each $Z_{n}(C_{\bullet})$  is called the {\rm $n$-th $\mathbb{E}$-cycle} of $C_\bullet$ in $\mcC.$ For the sake of simplicity, an acyclic  $\mathbb{E}$-triangle sequence $(C_{\bullet}, Z_\bullet(C_{\bullet}))$ will be denoted by $C_{\bullet}.$
\end{definition}

Let $\mcX\subseteq \mcC$ and $C\in \mcC$.
An \emph{$\mcX$-resolution of $C$} is an acyclic  $\mathbb{E}$-triangle sequence of the form
$
 \cdots \to X_k \to \cdots \to X_1 \to X_0 \to C\to 0
$
where $X_k\in \mcX,$ for every $k\geq 0.$ If $X_k=0$ for every $k>n$, we shall say that
the previous sequence is a \emph{finite 
$\mcX$-resolution of $C$ of length $n$}.
The \emph{resolution dimension of $C$ with respect to $\mcX$} (or the \emph{$\mcX$-resolution dimension of $C$}, for short), denoted $\resdim_{\mcX}(C)$, is the smallest $n \geq 0$ such that $C$ admits a finite $\mcX$-resolution of length $n$. If such $n$ does not exist, we set $\resdim_{\mcX}(C) := \infty$. Dually, an \emph{$\mcX$-coresolution of $C$} is an acyclic  $\mathbb{E}$-triangle sequence of the form $0\to C\to X_0\to X_{-1}\to\cdots\to X_{-k}\to \cdots$ where $X_{-k}\in \mcX,$ for every $k\geq 0.$ If $X_{-k}=0$ for every $k>n$, we shall say that
the previous sequence is a \emph{finite 
$\mcX$-coresolution of $C$ of length $n$}.The smallest $n \geq 0$ such that $C$ admits a finite $\mcX$-coresolution of length $n$ is denoted by $\coresdim_{\mcX}(C)$ and called the \emph{$\mcX$-coresolution dimension} of $C.$ Related with the two above homological dimensions, there exist the following classes of objects in $\mcC$:
\begin{align*}
\mcX^\wedge_n & := \{ C \in \mcC \text{ : } \resdim_{\mcX}(C) \leq n \}, & & \text{and} & \mcX^\wedge & := \bigcup_{n \geq 0} \mcX^\wedge_n, \\
\mcX^\vee_n & := \{ C \in \mcC \text{ : } \coresdim_{\mcX}(C) \leq n \}, & & \text{and} & \mcX^\vee & := \bigcup_{n \geq 0} \mcX^\vee_n.
\end{align*}

Given a class $\mcY$ of objects in $\mcC$,
the \emph{$\mcX$-resolution dimension of 
$\mcY$} is defined as
$$\resdim_{\mcX}(\mcY):=\sup\{\resdim_{\mcX}(Y) : Y\in \mcY\}.$$
The \emph{$\mcX$-coresolution dimension of 
$\mcY$} is defined similarly and denoted by 
$\coresdim_{\mcX}(\mcY)$.


\subsection*{Higher extensions} Let $\mcC$ be an extriangulated category. Following \cite{Nakaoka1}, we recall that an object $P\in \mcC$ is
$\mbE$-projective if for any $\mathbb{E}$-triangle $A\mathop{\to}\limits^{x} B
\mathop{\to}\limits^{y} C\mathop{\dashrightarrow}\limits^{\delta}$ the map
$$\mcC(P,y): \mcC(P,B)\longrightarrow \mcC(P,C)$$
is surjective. We denote by $\mathcal{P}_{\mbE}(\mcC)$ the class of $\mbE$-projective objects in $\mcC$. Dually, the class of $\mbE$-injective objects in $\mcC$ is denoted by $\mcI_{\mbE}(\mcC)$. We say that $\mcC$ has enough $\mbE$-projective objects if for any object $C\in \mcC$, there exists an 
$\mathbb{E}$-triangle $A\to P\to C\dashrightarrow$ satisfying $P\in \mcP_{\mbE}(\mcC)$. Dually, we can
define that $\mcC$ has enough $\mbE$-injective objects.\\

Let $\mcC$ be an extriangulated category and $\mcX, \mcY\subseteq \mcC$. Following \cite[Def. 4.2]{Nakaoka1}, we recall that:

\begin{enumerate}
\item[$\bullet$] $C\in \mcC$ belongs to $\mathrm{Cone}(\mcX, \mcY)$ if $C$ admits a 
conflation $X\to Y\to C$ with $X\in \mcX, Y\in \mcY$.

\item[$\bullet$] $C\in \mcC$ belongs to $\mathrm{CoCone}(\mcX, \mcY)$ if $C$ admits a 
conflation $C\to X\to Y$ with $X\in \mcX, Y\in \mcY$.

\item[$\bullet$] $\mcX$ is \emph{closed under cones} if
$\mathrm{Cone}(\mcX, \mcX)\subseteq \mcX$.
Dually, $\mcX$ is \emph{closed
under cocones} if $\mathrm{CoCone}(\mcX, \mcX)\subseteq \mcX$.
\end{enumerate}

We set $\Omega \mcX:=\mathrm{CoCone}(\mcP_{\mbE}(\mcC), \mcX)$, that is, $\Omega \mcX$ is
the subclass of $\mcC$ consisting of the objects $\Omega X$ admitting an $\mathbb{E}$-triangle
$\Omega X\to P\to X\dashrightarrow$
with $P\in\mcP_{\mbE}(\mcC)$ and $X\in \mcX$. We call $\Omega \mcX$ the syzygy of $\mcX$. Dually, 
the cosyzygy of $\mcX$ is $\Sigma\mcX:=\mathrm{Cone}(\mcX, \mcI_{\mbE}(\mcC))$. We set $\Omega^{0}\mcX:=\mcX$, and define $\Omega^{k}\mcX$ for $k>0$
inductively by
$\Omega^{k}\mcX:=\Omega(\Omega^{k-1}\mcX)=\mathrm{CoCone}(\mcP_{\mbE}(\mcC), \Omega^{k-1}\mcX)$.
We call $\Omega^{k}\mcX$ the $k$-th syzygy of $\mcX$. Dually, the $k$-th cosyzygy $\Sigma^{k}\mcX:=\mathrm{Cone}(\Sigma^{k-1}\mcX, \mcI_{\mbE}(\mcC))$ for $k>0$ and  $\Sigma^{0}\mcX:=\mcX$ 
(see \cite[Def. 4.2 \& Prop. 4.3]{LNheartsoftwin},
for more details).

Let $\mcC$ be an extriangulated category with enough $\mbE$-projectives and $\mbE$-injectives.
In \cite{LNheartsoftwin}, it is shown that $\mathbb{E}(X, \Sigma^{k}Y)\cong \mathbb{E}(\Omega^{k}X, Y)$ 
 for $k\geq 0.$ Thus, the higher extensions groups are defined as 
 $\mathbb{E}^{k+1}(X, Y):=\mathbb{E}(X, \Sigma^{k}Y)\cong \mathbb{E}(\Omega^{k}X, Y),$ for $k\geq 0$. 
 Moreover, the following result is also proved.

\begin{lemma}\cite[Prop. 5.2]{LNheartsoftwin} \label{longExSeq}
Let $\mcC$ be an extriangulated category with enough $\mbE$-projectives and $\mbE$-injectives and $A\to B\to C\dashrightarrow$ be an $\mathbb{E}$-triangle in $\mcC$. Then, for any object $X\in \mcC$ and $k\geq 1$, we have the following
exact sequences 
\[(1)\;
\cdots \to \mathbb{E}^{k}(X, A)\to \mathbb{E}^{k}(X, B)\to \mathbb{E}^{k}(X, C)\to \mathbb{E}^{k+1}(X, A)\to \mathbb{E}^{k+1}(X, B)\to \cdots,
\]
\[(2)\;
\cdots \to \mathbb{E}^{k}(C, X)\to \mathbb{E}^{k}(B, X)\to \mathbb{E}^{k}(A, X)\to \mathbb{E}^{k+1}(C, X)\to \mathbb{E}^{k+1}(B, X)\to \cdots
\]
of abelian groups.
\end{lemma}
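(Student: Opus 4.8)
The plan is to prove the exact sequence $(1)$; sequence $(2)$ follows by the dual argument, replacing the cosyzygies $\Sigma^{k}(-)$ and the isomorphism $\mathbb{E}^{k+1}(X,Y)\cong\mathbb{E}(X,\Sigma^{k}Y)$ by the syzygies $\Omega^{k}(-)$ and $\mathbb{E}^{k+1}(X,Y)\cong\mathbb{E}(\Omega^{k}X,Y)$, and using the contravariant six-term sequence of \cite[Cor. 3.12]{Nakaoka1} in place of the covariant one. Throughout I would use two facts that drop out of the defining isomorphisms: if $I\in\mcI_{\mbE}(\mcC)$ then $\mathbb{E}^{k}(X,I)=\mathbb{E}(\Omega^{k-1}X,I)=0$ for all $k\ge 1$ (because $\mathbb{E}(-,I)=0$), and dually $\mathbb{E}^{k}(P,X)=0$ for $P\in\mcP_{\mbE}(\mcC)$; that is, injectives are acyclic for the functors $\mathbb{E}^{\ge1}(X,-)$ and projectives for $\mathbb{E}^{\ge1}(-,X)$. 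The low-degree part of $(1)$, namely $\mathbb{E}(X,A)\to\mathbb{E}(X,B)\to\mathbb{E}(X,C)$ preceded by the $\mcC(X,-)$-terms, is exactly the covariant six-term sequence of \cite[Cor. 3.12]{Nakaoka1} applied to $A\to B\to C\dashrightarrow$.

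To reach higher degrees I would perform a dimension shift in the second variable, whose central ingredient is a horseshoe lemma for cosyzygies: given the $\mathbb{E}$-triangle $A\mathop{\to}\limits^{a}B\to C\dashrightarrow$ and chosen cosyzygy triangles $A\to I_{A}\to\Sigma A\dashrightarrow$ and $C\to I_{C}\to\Sigma C\dashrightarrow$ with $I_{A},I_{C}\in\mcI_{\mbE}(\mcC)$, one builds a cosyzygy $\mathbb{E}$-triangle $B\to I_{A}\oplus I_{C}\to\Sigma B\dashrightarrow$ together with a morphism of $\mathbb{E}$-triangles lying over $A\to B\to C$, so that the induced $\Sigma A\to\Sigma B\to\Sigma C\dashrightarrow$ is again an $\mathbb{E}$-triangle. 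The map $A\to I_{A}$ extends along the inflation $a$ because $\mathbb{E}(-,I_{A})=0$ forces $\mcC(a,I_{A})$ to be surjective (again by \cite[Cor. 3.12]{Nakaoka1}), while the object $\Sigma B$ and the two outer $\mathbb{E}$-triangles are produced by an application of $\mathrm{(ET4)}$, or equivalently of the pullback–pushout construction of Proposition~\ref{Nakaoka 3.15} and its dual. Iterating yields $\mathbb{E}$-triangles $\Sigma^{k}A\to\Sigma^{k}B\to\Sigma^{k}C\dashrightarrow$ realized by some $\delta_{k}\in\mathbb{E}(\Sigma^{k}C,\Sigma^{k}A)$ for every $k\ge0$.

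With these in hand, the six-term sequence applied to $\Sigma^{k-1}A\to\Sigma^{k-1}B\to\Sigma^{k-1}C\dashrightarrow$ and rewritten through $\mathbb{E}(X,\Sigma^{k-1}(-))=\mathbb{E}^{k}(X,-)$ supplies the middle-exact pieces $\mathbb{E}^{k}(X,A)\to\mathbb{E}^{k}(X,B)\to\mathbb{E}^{k}(X,C)$ for each $k\ge1$. I would splice consecutive pieces through a connecting map $\partial\colon\mathbb{E}^{k}(X,C)\to\mathbb{E}^{k+1}(X,A)$ defined by the usual lift-and-push recipe: the cosyzygy triangle $\Sigma^{k-1}C\to I\to\Sigma^{k}C\dashrightarrow$ of $C$ together with injectivity gives a surjection $\mcC(X,\Sigma^{k}C)\twoheadrightarrow\mathbb{E}^{k}(X,C)$; one lifts a class of $\mathbb{E}^{k}(X,C)$ to $\mcC(X,\Sigma^{k}C)$ and applies $(\delta_{k})_{\#}\colon\mcC(X,\Sigma^{k}C)\to\mathbb{E}(X,\Sigma^{k}A)=\mathbb{E}^{k+1}(X,A)$. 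Exactness at the junctions $\mathbb{E}^{k}(X,C)$ and $\mathbb{E}^{k+1}(X,A)$, together with independence of the chosen lift, are then diagram chases combining the two six-term sequences. I expect the main obstacle to lie not in these final chases but in the construction underlying the shift: producing the horseshoe triangle so that it genuinely commutes with $A\to B\to C\dashrightarrow$ in the extriangulated sense (the lifts furnished by injectivity are non-canonical, so compatibility must be forced through $\mathrm{(ET4)}$/Proposition~\ref{Nakaoka 3.15}), and verifying that $\partial$ is well defined and natural despite the arbitrary choices of injective objects and cosyzygies $\Sigma^{k}$. Once these compatibilities are secured, the choice-independence already built into $\mathbb{E}^{k+1}$ guarantees that the spliced sequence is canonical and exact.
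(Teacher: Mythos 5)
The paper does not prove this lemma: it is imported verbatim from \cite[Prop.~5.2]{LNheartsoftwin}, so there is no in-house argument to compare against. Your sketch is a faithful reconstruction of the standard dimension-shifting proof used there (compatible cosyzygy $\mathbb{E}$-triangles $\Sigma^{k}A\to\Sigma^{k}B\to\Sigma^{k}C\dashrightarrow^{\delta_k}$ via a horseshoe construction, then splicing of the six-term sequences of \cite[Cor.~3.12]{Nakaoka1}), and the two obstacles you single out are indeed the right ones. For the one you leave open --- well-definedness of $\partial$ --- the key observation is already contained in the horseshoe diagram you build: the kernel of the surjection $\mcC(X,\Sigma^{k}C)\twoheadrightarrow\mathbb{E}^{k}(X,C)$ consists of morphisms factoring through the deflation $p\colon I_{C}\to\Sigma^{k}C$, and $p$ itself factors through the deflation $\Sigma^{k}B\to\Sigma^{k}C$, because the commuting square $I_{A}\oplus I_{C}\to I_{C}$, $\Sigma^{k}B\to\Sigma^{k}C$ has split top row, so $p=(\Sigma^{k}B\to\Sigma^{k}C)\circ(I_{A}\oplus I_{C}\to\Sigma^{k}B)\circ s$ for the section $s\colon I_{C}\to I_{A}\oplus I_{C}$; exactness of $\mcC(I_C,\Sigma^{k}B)\to\mcC(I_C,\Sigma^{k}C)\xrightarrow{(\delta_k)_{\#}}\mathbb{E}(I_C,\Sigma^{k}A)$ then gives $\delta_{k}p=0$, hence $(\delta_k)_{\#}$ kills the kernel. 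Independence of the choices of injectives is absorbed by the canonical isomorphisms that make $\mathbb{E}^{k+1}$ well defined, as you say, so the sketch is sound.
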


\begin{remark}\label{RkAbTri} Let $\mcC$ be an extriangulated category.
 In the case that $\mcC$ is either abelian or triangulated, we do not need  enough $\mbE$-projectives and $\mbE$-injectives to have higher extensions groups. This is because, in that case, we already have defined (for free)  these extension groups 
$\mbE^i(-,-) \colon \mcC^{\rm op} \times \mcC \to  Ab$, with $i \geq 1.$ Indeed, in the abelian case, take the Yoneda's functor $\Ext^i_{\mcC}(-,-)$ (see \cite{Sieg} ),  and in the triangulated case take 
$\Hom_{\mcC}(-,-[i]),$ where $[1]:\mcC\to \mcC$ is the shift functor. Moreover, in such a cases, Lemma \ref{longExSeq} holds true.
\end{remark}

\begin{definition} Let $\mcC$ be an extriangulated category. We say that $\mcC$ is an {\bf extriangulated category with higher extension groups} if one of the following two conditions holds true: (i) $\mcC$ is either abelian or triangulated, and (ii) $\mcC$ has enough $\mbE$-projectives and $\mbE$-injectives.
\end{definition}

Let $\mcC$ be an extriangulated category with higher extension groups. We fix the following notation for $\mcX, \mcY\subseteq \mcC$ and $k \geq 1$.
\begin{itemize}
\item $\mathbb{E}^{k}(\mathcal{X, Y})=0$ if 
$\mathbb{E}^{k}(X, Y)=0$ for every $X\in \mcX$ and $Y\in \mcY$. When $\mcX=\{M\}$ or $\mcY=\{N\}$, 
we shall write $\mathbb{E}^{k}(M, \mcY)=0$ and
$\mathbb{E}^{k}(\mcX, N)=0$, respectively.

\item $\mathbb{E}^{\leq k}(\mcX, \mcY)=0$ if
$\mathbb{E}^{j}(\mcX, \mcY)=0$ for every $1\leq j\leq k$.

\item $\mathbb{E}^{\geq k}(\mcX, \mcY)=0$ if 
$\mathbb{E}^{j}(\mcX, \mcY)=0$ for every $j\geq k$.
\end{itemize}

Recall that the \emph{right $k$-th orthogonal complement} and the \emph{right orthogonal complement of $\mcX$} are defined, respectively, by 
\begin{align*}
\mcX^{\perp_k} := \{ N \in \mcC \mbox{ : } \mathbb{E}^k(\mcX,N) = 0 \}\;\text{ and }\; \mcX^\perp := \bigcap_{k \geq 1} \mcX^{\perp_k}=\{N\in \mcC : \mathbb{E}^{\geq 1}(\mcX, N)=0\}.
\end{align*}
 Dually, we have the \emph{left $k$-th} and the \emph{left orthogonal complements ${}^{\perp_k}\mcX$ and ${}^{\perp}\mcX$ of $\mcX$}, respectively. 


\subsection*{Relative homological dimensions}

Let $\mcC$ be an extriangulated category with higher extension groups. Let 
$\mcX, \mcY \subseteq \mcC$ and $C \in \mcC.$ Following \cite{ABtheory, BMPS, MendozaSaenzVargasSouto2, MDZtheoryAB}, the \emph{relative projective dimension of $C$} is $\pd_{\mcX}(C) := \min \{ m \geq 0 {\rm \ : \ } \mathbb{E}^{\geq m+1}(C,\mcX) = 0 \}$ and $\pd_{\mcX}(\mcY) := \sup\{ \pd_{\mcX}(Y) \text{ : } Y \in \mcY \}.$ Dually, we have the \emph{relative injective dimension} $\id_{\mcX}(C)$ of $C$ and $\id_{\mcX}(\mcY).$ Notice that $\pd_{\mcX}(\mcY)=\id_{\mcY}(\mcX).$ 
If $\mcX=\mcC,$ we just write $\pd(C),$ $\pd(\mcY),$ $\id(C)$ and $\id(\mcY),$ for the (absolute) projective and injective dimensions.

\subsection*{Relative (co)generators}
Let $\mcC$ be an extriangulated category and  $\omega, \mcX\subseteq \mcC$. Following 
\cite{ABtheory, BMPS, MendozaSaenzVargasSouto2, MDZtheoryAB}, we say that $\omega$ is a \emph{relative cogenerator in $\mcX$}, if 
$\omega\subseteq \mcX$ and for each object $X\in \mcX$, there
exists an $\mathbb{E}$-triangle $X\to W\to X'\dashrightarrow$ 
with $W\in \omega$ and $X'\in \mcX$. Assume now that $\mcC$ has higher extension groups.Then, the class $\omega$ is \emph{$\mcX$-injective} if $\id_{\mcX}(\omega)=0$. Dually, we have the notions of
\emph{relative generator} and \emph{$\mcX$-projective}.

\subsection*{Thick subcategories}
Let $\mcC$ be an extriangulated category. A subcategory is \emph{left thick} if it is closed under extensions, direct summands in $\mcC$ and cocones. \emph{Right thick subcategories} are defined dually. Finally, a subcategory is \emph{thick} if it is both left and right thick. For $\mcX\subseteq \mcC$, we shall denote by $\Thick(\mcX)$ the smallest thick subcategory of $\mcC$ containing $\mcX$.

\subsection*{Cotorsion pairs}

Let $\mcC$ be an extriangulated category and $\mathcal{U, V}\subseteq \mcC$ be closed under direct 
summands in $\mcC.$ Following \cite[Def. 4.1]{Nakaoka1} we say that the pair $(\mathcal{U, V})$ is a
\emph{cotorsion pair in $\mcC$} if it satisfies the
following conditions.
\begin{itemize}
\item[(CP1)] $\mathbb{E}(\mathcal{U, V})=0$.
\item[(CP2)] For any $C\in \mcC$, there exist conflations
$$V\to U\to C \quad \mbox{ and } \quad C\to V'\to U'$$ 
satisfying $U, U' \in \mathcal{U}$ and 
$V, V' \in \mathcal{V}$.
\end{itemize}
If $(\mathcal{U, V})$ is a cotorsion pair in $\mcC,$ then $\mathcal{U}={}^{\perp_1}\mathcal{V}$ and $\mathcal{V}=\mathcal{U}^{\perp_1}.$ In particular $\mathcal{U}$ and $\mathcal{V}$ are additive subcategories of $\mcC.$

\section{\textbf{Some technical results}}

Although abelian and extriangulated categories share homological tools and constructions, it is not always clear how these mechanisms can be carried to different contexts. In 
this section we begin enunciating results from \cite{HMPcut} which have been adapted to extriangulated categories. For the rest of this work $\mcC,$ denotes an extriangulated
category. The following result is a generalization of \cite[Lem. 3.3]{HMPcut}.

\begin{lemma}\label{lem:technical_lemma_1}
Let $\omega, \mcS \subseteq \mcC$ with $\omega$ closed under extensions and $\omega \cap \mcS$ be a relative generator in $\omega$. Let $C \in \mcC$ and 
\begin{align}
C_\bullet :\quad \cdots \to W_n\to W_{n-1} \to \cdots \to W_1 \to W_0 \to C\to 0 \label{eqn:resolution_omega}
\end{align}
be an $\omega$-resolution of $C.$ Then, there exist 
$\mathbb{E}$-triangles
\begin{align}
G_j & \to X_{j+1} \to Z_{j+1}(C_\bullet)\dashrightarrow & & \text{and} & X_{j+1} & \to F_j \to X_j\dashrightarrow \label{eqn:technical} 
\end{align}
where $X_0 := C$, $F_j \in \omega \cap \mcS$ and $G_j \in \omega,$ for every $j\geq 0$.  
\end{lemma}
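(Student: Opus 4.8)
The plan is to build the desired $\mathbb{E}$-triangles by induction on $j\geq 0$, gradually replacing the objects $W_j\in\omega$ of the given resolution by objects $F_j\in\omega\cap\mcS$, using the relative generator hypothesis together with the extriangulated axioms. Writing $Z_0(C_\bullet):=X_0=C$, recall first that the acyclic $\mathbb{E}$-triangle sequence \eqref{eqn:resolution_omega} provides, for every $j\geq 0$, an $\mathbb{E}$-triangle
$$Z_{j+1}(C_\bullet)\to W_j\to Z_j(C_\bullet)\dashrightarrow$$
with $W_j\in\omega$. The inductive datum I would carry along is: for each $j\geq 0$ there exist $M_j\in\omega$ and an $\mathbb{E}$-triangle $Z_{j+1}(C_\bullet)\to M_j\to X_j\dashrightarrow$ (with $X_0=C$). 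For the base case $j=0$ one simply takes $M_0:=W_0\in\omega$ and the $\mathbb{E}$-triangle above with $j=0$, since $Z_0(C_\bullet)=X_0=C$.

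Assume the inductive datum $Z_{j+1}(C_\bullet)\to M_j\to X_j\dashrightarrow$ with $M_j\in\omega$. Since $\omega\cap\mcS$ is a relative generator in $\omega$ and $M_j\in\omega$, there is an $\mathbb{E}$-triangle $M_j'\to F_j\to M_j\dashrightarrow$ with $F_j\in\omega\cap\mcS$ and $M_j'\in\omega$. Next I would apply (ET4)${}^{op}$ to the pair of $\mathbb{E}$-triangles $M_j'\to F_j\to M_j\dashrightarrow$ and $Z_{j+1}(C_\bullet)\to M_j\to X_j\dashrightarrow$ (which share the object $M_j$ as the quotient term of the first and the middle term of the second). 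This yields an object $X_{j+1}$ together with two $\mathbb{E}$-triangles
$$X_{j+1}\to F_j\to X_j\dashrightarrow\qquad\text{and}\qquad M_j'\to X_{j+1}\to Z_{j+1}(C_\bullet)\dashrightarrow.$$
Setting $G_j:=M_j'\in\omega$, these are exactly the two $\mathbb{E}$-triangles required by \eqref{eqn:technical} at the index $j$, with $F_j\in\omega\cap\mcS$ and $G_j\in\omega$ as claimed.

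It remains to re-establish the inductive datum at index $j+1$, i.e.\ to produce $M_{j+1}\in\omega$ together with an $\mathbb{E}$-triangle $Z_{j+2}(C_\bullet)\to M_{j+1}\to X_{j+1}\dashrightarrow$. For this I would invoke Proposition~\ref{Nakaoka 3.15} applied to the two $\mathbb{E}$-triangles $G_j\to X_{j+1}\to Z_{j+1}(C_\bullet)\dashrightarrow$ (just obtained) and $Z_{j+2}(C_\bullet)\to W_{j+1}\to Z_{j+1}(C_\bullet)\dashrightarrow$ (from the resolution), both having $Z_{j+1}(C_\bullet)$ as quotient. This produces an object $M_{j+1}$ fitting into $\mathbb{E}$-triangles $Z_{j+2}(C_\bullet)\to M_{j+1}\to X_{j+1}\dashrightarrow$ and $G_j\to M_{j+1}\to W_{j+1}\dashrightarrow$. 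Since $G_j,W_{j+1}\in\omega$ and $\omega$ is closed under extensions, the latter $\mathbb{E}$-triangle forces $M_{j+1}\in\omega$, which closes the induction.

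Conceptually the argument is a splicing (octahedron) construction, so the individual steps are routine once the objects are matched correctly; the points requiring care are purely in the bookkeeping of the axiom applications. I expect the main (modest) obstacle to be verifying at each stage that the two input $\mathbb{E}$-triangles of (ET4)${}^{op}$ and of Proposition~\ref{Nakaoka 3.15} share the correct common object, and that closure of $\omega$ under extensions can be invoked to keep each auxiliary object $M_j$ inside $\omega$. The key structural feature that makes the induction work is that the object $X_{j+1}$ produced by (ET4)${}^{op}$ simultaneously serves as the $(j{+}1)$-st cycle of the new $(\omega\cap\mcS)$-resolution and fits the syzygy-type $\mathbb{E}$-triangle $G_j\to X_{j+1}\to Z_{j+1}(C_\bullet)\dashrightarrow$, which is exactly what the octahedron-type axiom guarantees.
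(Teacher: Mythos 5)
Your proposal is correct and follows essentially the same argument as the paper: the same three ingredients (the relative generator hypothesis, (ET4)$^{op}$ applied to a pair of $\mathbb{E}$-triangles sharing the object being resolved, and Proposition~\ref{Nakaoka 3.15} applied to two $\mathbb{E}$-triangles with common cone $Z_{j+1}(C_\bullet)$, plus closure of $\omega$ under extensions) appear in the same roles. The only difference is bookkeeping: you carry the intermediate $\mathbb{E}$-triangle $Z_{j+1}(C_\bullet)\to M_j\to X_j$ as the inductive datum, whereas the paper carries the two output $\mathbb{E}$-triangles and constructs your $M_{j+1}$ (their $F'_{j+1}$) at the start of the next step.
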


\begin{proof}
Let us prove this result by induction on $j$. 
\begin{itemize}
\item \underline{Initial step}: For the case $j = 0$, since $\omega \cap \mcS$ is a relative generator in $\omega$, there is an
$\mathbb{E}$-triangle $G_0 \to F_0 \to W_0\dashrightarrow$ with $G_0 \in \omega$ and $F_0 \in \omega \cap \mcS$. From condition
(ET4)$^{op}$ we can form the following 
commutative diagram in $\mcC$
\[
\begin{tikzpicture}[description/.style={fill=white,inner sep=2pt}] 
\matrix (m) [matrix of math nodes, row sep=2.5em, column sep=2.5em, text height=1.25ex, text depth=0.25ex] 
{ 
G_0 & X_1 & Z_1(C_\bullet) \\
G_0 & F_{0} & W_{0}  \\
{} & C & C  \\
};  
\path[->] 
(m-1-1) edge (m-1-2) (m-1-2) edge (m-1-3)
(m-2-1) edge (m-2-2) (m-2-2) edge (m-2-3)
(m-1-2) edge (m-2-2) (m-2-2) edge (m-3-2)
(m-1-3) edge (m-2-3) (m-2-3) edge (m-3-3)
; 
\path[-,font=\scriptsize]
(m-3-2) edge [double, thick, double distance=2pt] (m-3-3)
(m-1-1) edge [double, thick, double distance=2pt] (m-2-1)
;
\end{tikzpicture} 
\]
where $G_0\to X_1\to Z_1(C_\bullet)\dashrightarrow$ and $X_1\to F_0\to C\dashrightarrow$ are $\mathbb{E}$-triangles. Hence, the 
existence of such $\mathbb{E}$-triangles holds in this case.

\item \underline{Induction step}: Now suppose that for $1 \leq j$ there are $\mathbb{E}$-triangles  
$G_j \to X_{j+1} \to Z_{j+1}(C_\bullet)\dashrightarrow$ and 
$X_{j+1} \to F_j \to X_j\dashrightarrow$, with $F_j \in \omega \cap \mcS$ and $G_j \in \omega$. From 
Proposition~\ref{Nakaoka 3.15}
and by considering the $\mathbb{E}$-triangle $Z_{j+2}(C_\bullet) \to W_{j+1} \to Z_{j+1}(C_\bullet)\dashrightarrow$ we can construct
a commutative diagram in $\mcC$
\[
\begin{tikzpicture}[description/.style={fill=white,inner sep=2pt}] 
\matrix (m) [matrix of math nodes, row sep=2.5em, column sep=2.5em, text height=1.25ex, text depth=0.25ex] 
{ 
{} & G_j & G_j \\
Z_{j+2}(C_\bullet) & F'_{j+1} & X_{j+1}  \\
Z_{j+2}(C_\bullet) & W_{j+1} & Z_{j+1}(C_\bullet)  \\
};  
\path[->] 
(m-2-1) edge (m-2-2) (m-2-2) edge (m-2-3) 
(m-3-1) edge (m-3-2) (m-3-2) edge (m-3-3) 
(m-1-2) edge (m-2-2) (m-2-2) edge (m-3-2)
(m-1-3) edge (m-2-3) (m-2-3) edge (m-3-3)
; 
\path[-,font=\scriptsize]
(m-1-2) edge [double, thick, double distance=2pt] (m-1-3)
(m-2-1) edge [double, thick, double distance=2pt] (m-3-1)
;
\end{tikzpicture} 
\]
where $G_j\to F'_{j+1}\to W_{j+1}\dashrightarrow$ is an
$\mathbb{E}$-triangle. Since $\omega$ is closed under extensions, $F'_{j+1} \in \omega$. By using again that $\omega \cap \mcS$ is a relative generator in $\omega$, we have an $\mathbb{E}$-triangle
$G_{j+1} \to F_{j+1} \to F'_{j+1}\dashrightarrow$ with $F_{j+1} \in \omega \cap \mcS$ and $G_{j+1} \in \omega$. From condition
(ET4)$^{op}$ again we get a
commutative diagram
\[
\begin{tikzpicture}[description/.style={fill=white,inner sep=2pt}] 
\matrix (m) [matrix of math nodes, row sep=2.5em, column sep=2.5em, text height=1.25ex, text depth=0.25ex] 
{ 
G_{j+1} & X_{j+2} & Z_{j+2}(C_\bullet) \\
G_{j+1} & F_{j+1} & F'_{j+1}  \\
{} & X_{j+1} & X_{j+1}  \\
};  
\path[->] 
(m-1-1) edge (m-1-2) (m-1-2) edge (m-1-3)
(m-2-1) edge (m-2-2) (m-2-2) edge (m-2-3)
(m-1-2) edge (m-2-2) (m-2-2) edge (m-3-2)
(m-1-3) edge (m-2-3) (m-2-3) edge (m-3-3)
; 
\path[-,font=\scriptsize]
(m-3-2) edge [double, thick, double distance=2pt] (m-3-3)
(m-1-1) edge [double, thick, double distance=2pt] (m-2-1)
;
\end{tikzpicture} 
\]
where $G_{j+1}\to X_{j+2}\to Z_{j+2}(C_\bullet)\dashrightarrow$ and $X_{j+2}\to F_{j+1}\to X_{j+1}\dashrightarrow$ are $\mathbb{E}$-triangles. Therefore, the result follows.
\end{itemize}
\end{proof}

The following result is the generalization of \cite[Lem. 3.4]{HMPcut}.

\begin{lemma}\label{lem:technical_lemma_2}
Let $\omega\subseteq \mcC$ be closed under extensions. If $W \to B \to C\dashrightarrow$ is an
$\mathbb{E}$-triangle with $W \in \omega$ and $C \in \omega^\wedge$, then $B \in \omega^\wedge$ and $\resdim_\omega(B) \leq \resdim_\omega(C)$. 
\end{lemma}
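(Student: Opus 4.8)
The plan is to control $\resdim_\omega(B)$ in terms of $n := \resdim_\omega(C)$, which is finite because $C \in \omega^\wedge$. The base case $n = 0$ is immediate: here $C \in \omega$, so the $\mathbb{E}$-triangle $W \to B \to C\dashrightarrow$ has both outer terms in $\omega$, and closure of $\omega$ under extensions forces $B \in \omega$, whence $\resdim_\omega(B) = 0 = \resdim_\omega(C)$.

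For $n \geq 1$ I would first fix a finite $\omega$-resolution of $C$ of length $n$ and isolate its top $\mathbb{E}$-triangle $Z_1 \to W_0 \to C\dashrightarrow$, with $W_0 \in \omega$. Truncating this resolution at the first $\mathbb{E}$-cycle exhibits an $\omega$-resolution of $Z_1$ of length $n-1$, so $\resdim_\omega(Z_1) \leq n-1$. Now there are two $\mathbb{E}$-triangles ending at $C$, namely the hypothesized $W \to B \to C\dashrightarrow$ and the extracted $Z_1 \to W_0 \to C\dashrightarrow$. Applying Proposition~\ref{Nakaoka 3.15} to this pair produces an object $M$ together with $\mathbb{E}$-triangles $W \to M \to W_0\dashrightarrow$ and $Z_1 \to M \to B\dashrightarrow$. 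Since $W, W_0 \in \omega$ and $\omega$ is closed under extensions, the first of these gives $M \in \omega$.

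The remaining step converts the $\mathbb{E}$-triangle $Z_1 \to M \to B\dashrightarrow$, whose middle term now lies in $\omega$, into a resolution of $B$. I would splice a length-$(\leq n-1)$ $\omega$-resolution of $Z_1$ onto this $\mathbb{E}$-triangle: the deflation $W_0' \twoheadrightarrow Z_1$ at the top of the resolution of $Z_1$, composed with the inflation $Z_1 \rightarrowtail M$, supplies the differential $W_0' \to M$, while the $\mathbb{E}$-cycles of the resolution of $Z_1$ become the higher $\mathbb{E}$-cycles of the new sequence, with $Z_1$ itself playing the role of the first $\mathbb{E}$-cycle of $B$. This yields an acyclic $\mathbb{E}$-triangle sequence $0 \to W_{n-1}' \to \cdots \to W_0' \to M \to B \to 0$ with all terms in $\omega$, that is, an $\omega$-resolution of $B$ of length $\leq n$. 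Hence $B \in \omega^\wedge$ and $\resdim_\omega(B) \leq n = \resdim_\omega(C)$.

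The main obstacle is purely bookkeeping: one must verify that the splice in the last step is a genuine acyclic $\mathbb{E}$-triangle sequence in the sense of the definition, i.e.\ that each differential factors through the appropriate $\mathbb{E}$-cycle and that the compatibility condition $f_n g_{n+1} = d_{n+1}$ survives the gluing. This is routine but has to be checked in the extriangulated setting, where the familiar kernel/image language of resolutions is replaced by $\mathbb{E}$-cycles and the realization $\mathfrak{s}$; everything else reduces to closure of $\omega$ under extensions together with a single application of Proposition~\ref{Nakaoka 3.15}.
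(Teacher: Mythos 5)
Your proposal is correct and follows essentially the same route as the paper: both arguments extract the top $\mathbb{E}$-triangle $W'\to W_0\to C\dashrightarrow$ from a minimal $\omega$-resolution of $C$, apply Proposition~\ref{Nakaoka 3.15} to the two $\mathbb{E}$-triangles ending at $C$ to produce an object ($M$ in your notation, $E$ in the paper's) lying in $\omega$ by closure under extensions, and then read off a length-$\leq n$ $\omega$-resolution of $B$ from the resulting $\mathbb{E}$-triangle $W'\to M\to B\dashrightarrow$. The splicing step you flag as bookkeeping is exactly what the paper leaves implicit in the phrase ``from the middle row we get that $B\in\omega^{\wedge}$.''
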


\begin{proof}
For $\resdim_{\omega}(C) = 0$, the result follows since $\omega$ is closed under extensions. So we may assume that $\resdim_{\omega}(C) \geq 1$. Then, there exists an $\mathbb{E}$-triangle $W' \to W_0 \to C\dashrightarrow$ with $W_0 \in \omega$ and $\resdim_{\omega}(W') = \resdim_{\omega}(C) - 1$. From 
Proposition~\ref{Nakaoka 3.15}, one can
form the following commutative diagram in $\mcC$
\[
\begin{tikzpicture}[description/.style={fill=white,inner sep=2pt}] 
\matrix (m) [matrix of math nodes, row sep=2.5em, column sep=2.5em, text height=1.25ex, text depth=0.25ex] 
{ 
{} & W & W \\
W' & E & B  \\
W' & W_0 & C  \\
};  
\path[->] 
(m-2-1) edge (m-2-2) (m-2-2) edge (m-2-3) 
(m-3-1) edge (m-3-2) (m-3-2) edge (m-3-3) 
(m-1-2) edge (m-2-2) (m-2-2) edge (m-3-2)
(m-1-3) edge (m-2-3) (m-2-3) edge (m-3-3)
; 
\path[-,font=\scriptsize]
(m-1-2) edge [double, thick, double distance=2pt] (m-1-3)
(m-2-1) edge [double, thick, double distance=2pt] (m-3-1)
;
\end{tikzpicture} 
\]
where $W'\to E\to B\dashrightarrow$ is an
$\mathbb{E}$-triangle and $E\in \omega$ due to $\omega$ is closed under extensions. Therefore, from the middle row we get that $B\in \omega^{\wedge}$. 
\end{proof}

\begin{lemma}\label{lem: w^ cerrada por sumandos} Let $A\to B\to C\dashrightarrow$ be an $\mathbb{E}$-triangle in
$\mcC$.
\begin{enumerate}
\item If $C=C_1\oplus C_2$ in $\mcC,$ then
there exist $\mathbb{E}$-triangles $A_i \to B \to C_i \dashrightarrow$ for every $i=1, 2$.
\item The sequence $C\mathop{\longrightarrow}\limits^{\nabla} C\oplus C\mathop{\longrightarrow}\limits^{\footnotesize{(-1 \, 1)}} C$ realizes the split $\mathbb{E}$-extension $0\in \mathbb{E}(C,C),$ for any $C\in \mcC,$ where
$\nabla:=\footnotesize{\left(\begin{array}{c}
1\\1
\end{array}
\right)}: C\to C\oplus C$ is the codiagonal map.
\end{enumerate}
\end{lemma}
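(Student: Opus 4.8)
The plan is to treat the two parts independently, relying on the additivity of the realization $\mathfrak{s}$ (\cite[Def. 2.10]{Nakaoka1}) and on the axiom (ET4)$^{op}$.

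For part (1), the guiding idea is that a biproduct decomposition produces split conflations, so the canonical projections are deflations. First I would record, from the additivity of $\mathfrak{s}$, that for $\{i,j\}=\{1,2\}$ the split extension $0\in\mathbb{E}(C_i,C_j)$ is realized by $C_j\xrightarrow{\iota_j}C_1\oplus C_2\xrightarrow{\pi_i}C_i$, where $\iota_j,\pi_i$ are the structural inclusion and projection; in particular each $\pi_i\colon C\to C_i$ is a deflation. Writing the given $\mathbb{E}$-triangle as $A\to B\xrightarrow{y}C\dashrightarrow$, the morphism $y$ is a deflation as well.

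Next I would apply (ET4)$^{op}$ to the pair of $\mathbb{E}$-triangles $A\to B\xrightarrow{y}C\dashrightarrow$ and $C_j\xrightarrow{\iota_j}C\xrightarrow{\pi_i}C_i\dashrightarrow$, which share the object $C$. Matching notation, the roles of the objects $A,B,C$ in (ET4)$^{op}$ are played here by $B,C,C_i$, so the commutative diagram it produces has as its middle column a conflation $A_i\to B\to C_i$ whose deflation is the composite $\pi_i\circ y$. Thus (ET4)$^{op}$ is exactly the statement that the composite of two deflations is a deflation, and it delivers the desired $\mathbb{E}$-triangle $A_i\to B\to C_i\dashrightarrow$ for each $i=1,2$.

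For part (2), I would reduce everything to a single isomorphism of sequences. By the additivity of $\mathfrak{s}$, the split extension $0\in\mathbb{E}(C,C)$ is realized by the standard sequence $C\xrightarrow{\binom{1}{0}}C\oplus C\xrightarrow{(0\,1)}C$; by the definition of equivalence of sequences (\cite[Def. 2.7]{Nakaoka1}) it then suffices to produce an isomorphism $b\colon C\oplus C\to C\oplus C$ with $b\binom{1}{0}=\binom{1}{1}$ and $(-1\,1)\,b=(0\,1)$. The matrix $b=\begin{pmatrix}1&0\\1&1\end{pmatrix}$ is invertible (its determinant is $1$) and satisfies both identities, so the two sequences lie in the same equivalence class $\mathfrak{s}(0)$; hence $C\xrightarrow{\nabla}C\oplus C\xrightarrow{(-1\,1)}C$ realizes $0\in\mathbb{E}(C,C)$.

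I expect the only real obstacle to be in part (1): recognizing the statement as an instance of ``the composite of two deflations is a deflation'' and then correctly aligning the given data with the hypotheses of (ET4)$^{op}$ — in particular verifying that the two input $\mathbb{E}$-triangles share the middle object $C$ and that the composite deflation read off the diagram is indeed $\pi_i\circ y$. Part (2) is a routine computation once the isomorphism is guessed, the only care being to respect the direction of $b$ prescribed by \cite[Def. 2.7]{Nakaoka1} and the sign appearing in $(-1\,1)$.
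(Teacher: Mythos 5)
Your proof is correct and takes essentially the same route as the paper: part (1) is exactly the paper's argument, namely an application of (ET4)$^{op}$ to the given $\mathbb{E}$-triangle $A\to B\to C\dashrightarrow$ and the split conflation $C_j\to C\to C_i$, with the same alignment of objects and the same resulting middle conflation $A_i\to B\to C_i$. For part (2) the paper simply cites \cite[Rmk. 2.11-(2)]{Nakaoka1}, whereas you verify it directly via the automorphism $b=\left(\begin{smallmatrix}1&0\\1&1\end{smallmatrix}\right)$ of $C\oplus C$; your computation is correct and self-contained.
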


\begin{proof} (1) From condition (ET4)$^{op}$ we have the following diagram 
\[
\begin{tikzpicture}[description/.style={fill=white,inner sep=2pt}] 
\matrix (m) [matrix of math nodes, row sep=2.5em, column sep=2.5em, text height=1.25ex, text depth=0.25ex] 
{ 
A & A_2 & C_1 \\
A & B & C  \\
{} & C_2 & C_2,  \\
};  
\path[->] 
(m-1-1) edge (m-1-2) (m-1-2) edge (m-1-3)
(m-2-1) edge (m-2-2) (m-2-2) edge (m-2-3)
(m-1-2) edge (m-2-2) (m-2-2) edge (m-3-2)
(m-1-3) edge node[right] {\footnotesize$\left( \begin{array}{c}
1\\ 0
\end{array}
\right)$} (m-2-3) (m-2-3) edge node[right] {\footnotesize$\left( \begin{array}{cc}
0 & 1
\end{array}
\right)$} (m-3-3)
; 
\path[-,font=\scriptsize]
(m-3-2) edge [double, thick, double distance=2pt] (m-3-3)
(m-1-1) edge [double, thick, double distance=2pt] (m-2-1)
;
\end{tikzpicture} 
\]
where $A_2\to B\to C_2\dashrightarrow$ is an $\mathbb{E}$-triangle. In a similar way, we obtain an $\mathbb{E}$-triangle
$A_1\to B\to C_1\dashrightarrow$.
\

(2) It follows from \cite[Rmk. 2.11-(2)]{Nakaoka1}.
\end{proof}

The following result generalizes \cite[Lem. 3.5]{HMPcut}.

\begin{lemma}\label{lem:main_lemma}
Let $\mcC$ be with higher extension groups, $\omega, \mcS \subseteq \mcC$ be such that $\omega$ is closed under extensions and isomorphisms, and let $\omega \cap \mcS$ be an $\omega$-projective relative generator in $\omega.$ Then, the following statements hold true.
\begin{enumerate}
\item $\omega \cap \mcS$ is an $\omega^\wedge$-projective relative generator in $\omega^\wedge$. Moreover, for any $C \in \omega^\wedge$ with $\resdim_{\omega}(C) \geq 1$, there exists an $\mathbb{E}$-triangle
$K \to F \to C\dashrightarrow$ such that $F \in \omega \cap \mcS$ and $\resdim_{\omega}(K) = \resdim_{\omega}(C) - 1$. 

\item $\omega^\wedge$ is closed under extensions.

\item If $\omega$ is closed under direct summands, then so is $\omega^\wedge$. 

\item If $\mcS$ is closed under cones and cocones, then $\omega^\wedge \cap \mcS = (\omega \cap \mcS)^\wedge$.
\end{enumerate}
\end{lemma}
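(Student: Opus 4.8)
The plan is to handle the four items in turn, using Lemmas~\ref{lem:technical_lemma_1} and~\ref{lem:technical_lemma_2}, Proposition~\ref{Nakaoka 3.15}, Lemma~\ref{lem: w^ cerrada por sumandos} and the long exact sequences of Lemma~\ref{longExSeq}. Throughout I would lean on the elementary \emph{splicing} remark: if $K \to W \to B\dashrightarrow$ is an $\mathbb{E}$-triangle with $W\in\omega$ and $K\in\omega^\wedge$, then $B\in\omega^\wedge$ and $\resdim_\omega(B)\le \resdim_\omega(K)+1$, since one simply prepends an $\omega$-resolution of $K$ to the deflation $W\to B$. For (1), given $C\in\omega^\wedge$ with $\resdim_\omega(C)=n\ge 1$, I would take a finite $\omega$-resolution $C_\bullet$ and apply Lemma~\ref{lem:technical_lemma_1}; its $j=0$ output gives $X_1\to F_0\to C\dashrightarrow$ with $F_0\in\omega\cap\mcS$ and $G_0\to X_1\to Z_1(C_\bullet)\dashrightarrow$ with $G_0\in\omega$. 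Setting $K:=X_1$, $F:=F_0$, Lemma~\ref{lem:technical_lemma_2} gives $\resdim_\omega(X_1)\le\resdim_\omega(Z_1(C_\bullet))=n-1$, while the splicing remark gives $n\le\resdim_\omega(X_1)+1$, so $\resdim_\omega(K)=n-1$; the case $n=0$ is the relative generator of $\omega$ itself. For the $\omega^\wedge$-projectivity $\mathbb{E}^{\ge 1}(\omega\cap\mcS,\omega^\wedge)=0$ I would induct on $\resdim_\omega(C)$: choosing $W'\to W_0\to C\dashrightarrow$ with $W_0\in\omega$ and $\resdim_\omega(W')=n-1$, the sequence of Lemma~\ref{longExSeq}(1) for fixed $F\in\omega\cap\mcS$ sandwiches $\mathbb{E}^k(F,C)$ between $\mathbb{E}^k(F,W_0)=0$ (the hypothesis that $\omega\cap\mcS$ is $\omega$-projective) and $\mathbb{E}^{k+1}(F,W')=0$ (induction).

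For (2) the base case is exactly Lemma~\ref{lem:technical_lemma_2}: when the left term of $A\to B\to C\dashrightarrow$ lies in $\omega$ we are done. For the inductive step (on $\resdim_\omega(A)$) with $A,C\in\omega^\wedge$, I would resolve $C$ using (1) to get $K^C\to W^C\to C\dashrightarrow$ with $W^C\in\omega\cap\mcS$ and $\resdim_\omega(K^C)=\resdim_\omega(C)-1$, and pull $B\to C$ back along $W^C\to C$ via Proposition~\ref{Nakaoka 3.15}. The resulting object $M$ sits in $A\to M\to W^C\dashrightarrow$ and $K^C\to M\to B\dashrightarrow$; the first extension lies in $\mathbb{E}(W^C,A)$, which vanishes by the $\omega^\wedge$-projectivity from (1), so $M\simeq A\oplus W^C\in\omega^\wedge$. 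Resolving this middle term and applying (ET4)$^{op}$ then produces a triangle whose middle object lies in $\omega$ (to which splicing applies) together with an auxiliary extension triangle whose outer terms have strictly smaller $\omega$-resolution dimension, closing the induction and giving $B\in\omega^\wedge$. This is the hard part: unlike in the abelian setting no single octahedral axiom grafts an $\omega$-resolution of a term onto the given triangle when that term sits on the ``wrong'' side of its resolution, and the device that rescues a well-founded horseshoe-type induction is precisely the vanishing $\mathbb{E}(\omega\cap\mcS,\omega^\wedge)=0$, which splits the pullback extension.

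For (3) I would induct on $n=\resdim_\omega(C)$ for $C=C_1\oplus C_2\in\omega^\wedge$, the base $n=0$ being the hypothesis that $\omega$ is closed under direct summands. In the step, the relative generator triangle $K\to F\to C\dashrightarrow$ from (1) (with $F\in\omega\cap\mcS$ and $\resdim_\omega(K)=n-1$) splits along $C=C_1\oplus C_2$ by Lemma~\ref{lem: w^ cerrada por sumandos}(1) into $K_i\to F\to C_i\dashrightarrow$. Comparing the two presentations $K\to F\to C$ and $K_1\oplus K_2\to F\oplus F\to C$ by a Schanuel argument (the lift exists because $\mathbb{E}(F\oplus F,K)=0$ by (1)) yields $K_1\oplus K_2\oplus F\simeq K\oplus F\oplus F\in\omega^\wedge$ of resolution dimension $\le n-1$; the induction hypothesis then places the summands $K_1,K_2$ in $\omega^\wedge$, and splicing $K_i\to F\to C_i$ returns $C_i\in\omega^\wedge$.

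Finally, for (4) I would prove the two inclusions separately by induction on the pertinent resolution length. For $(\omega\cap\mcS)^\wedge\subseteq\omega^\wedge\cap\mcS$, a syzygy triangle $Z_1\to F_0\to C\dashrightarrow$ with $F_0\in\omega\cap\mcS$ and $Z_1\in(\omega\cap\mcS)^\wedge$ of smaller length gives, by induction, $Z_1\in\mcS$, whence $C$ is a cone of objects of $\mcS$ and $\mcS$ being closed under cones forces $C\in\mcS$ (membership in $\omega^\wedge$ is immediate since $\omega\cap\mcS\subseteq\omega$). For $\omega^\wedge\cap\mcS\subseteq(\omega\cap\mcS)^\wedge$, the relative generator triangle $K\to F\to C\dashrightarrow$ of (1) exhibits $K$ as the cocone of $F,C\in\mcS$, so $\mcS$ closed under cocones gives $K\in\omega^\wedge\cap\mcS$ with $\resdim_\omega(K)=\resdim_\omega(C)-1$; by induction $K\in(\omega\cap\mcS)^\wedge$, and splicing its $(\omega\cap\mcS)$-resolution under $F\to C$ finishes the argument.
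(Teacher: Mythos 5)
Your argument follows essentially the same route as the paper's: the same inductions on resolution dimension, the same inputs (Lemmas~\ref{lem:technical_lemma_1}, \ref{lem:technical_lemma_2} and \ref{lem: w^ cerrada por sumandos}, the pullback of Proposition~\ref{Nakaoka 3.15}), and the same use of the splittings forced by $\pd_{\omega^\wedge}(\omega\cap\mcS)=0$; parts (1), (2) and (4) are correct as written (the paper cites \cite{MDZtheoryAB} for the projectivity in (1) and defers (4) to \cite{HMPcut}, whereas you prove both directly, which is fine). The one step to tighten is the Schanuel argument in (3): the vanishing $\mathbb{E}(F\oplus F,K)=0$ splits only one of the two conflations through the pullback $M$, namely $K\to M\to F\oplus F$; the other conflation $K_1\oplus K_2\to M\to F$ realizes $(\delta_1\oplus\delta_2)h$, and you cannot split it by invoking $\mathbb{E}(F,K_1\oplus K_2)=0$, since $K_1\oplus K_2\in\omega^\wedge$ is precisely what is being proved. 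Instead one observes that $h=(h_1\oplus h_2)\nabla$ factors through the codiagonal and each $\delta_ih_i=0$, so $(\delta_1\oplus\delta_2)h=0$ and the conflation splits for free --- this is exactly the role $\nabla$ plays in the paper's explicit diagram; once this is supplied, your isomorphism $K_1\oplus K_2\oplus F\simeq K\oplus F\oplus F$ and the remainder of the argument go through.
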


\begin{proof} \
\begin{enumerate}
\item From \cite[Lem. 3.8]{MDZtheoryAB} and its dual, we have that $\pd_{\omega^\wedge}(\omega \cap \mcS) = \id_{\omega \cap \mcS}(\omega^\wedge) = \id_{\omega \cap \mcS}(\omega) = \pd_{\omega}(\omega \cap \mcS) = 0$, and so $\omega \cap \mcS$ is $\omega^\wedge$-projective. Now, we show that $\omega \cap \mcS$ is a relative generator in $\omega^\wedge$. We use induction on $n := \resdim_{\omega}(C),$ for $C \in \omega^\wedge$. 
\begin{itemize}
\item \underline{Initial step}: This is clear since $\omega$ is closed under isomorphisms. 

\item \underline{Induction step}: For the case $n \geq 1$, we have an $\omega$-resolution of $C$
\[
0\to W_n \to W_{n-1} \to \cdots \to W_1 \to W_0 \to C\to 0
\]
with $W_k \in \omega$ for every $0 \leq k \leq n$. By Lemma \ref{lem:technical_lemma_1} and the notation therein, we have that $X_n \in \omega$ since $G_{n-1}, Z_n(C_{\bullet})=W_n \in \omega$ and $\omega$ is closed under extensions. Glueing together the $\mathbb{E}$-triangles in Lemma \ref{lem:technical_lemma_1}~\eqref{eqn:technical}, we get an acyclic $\mbE$-triangle sequence 
\[
0\to X_n \to F_{n-1} \to \cdots \to F_1 \to F_0 \to C\to 0
\]
with $F_k \in \omega \cap \mcS,$ for every $0 \leq k \leq n-1$. Thus, by 
considering the $\mathbb{E}$-triangle $X_1 \to F_0 \to C\dashrightarrow$ where $F_0 \in \omega \cap \mcS$ and $X_1 \in \omega^\wedge$ with $\resdim_{\omega}(X_1) = n - 1,$
we can conclude that $\omega \cap \mcS$ is a relative generator in $\omega^\wedge$. 
\end{itemize}

\item Suppose we are given an $\mathbb{E}$-triangle $A \to B \to C\dashrightarrow$ with $A, C \in \omega^\wedge$. Let us use induction on $n := \resdim_{\omega}(A)$ to show that $B \in \omega^\wedge$. 
\begin{itemize}
\item \underline{Initial step}: If $\resdim_{\omega}(A) = 0$, the result follows by Lemma \ref{lem:technical_lemma_2} and the hypothesis that $\omega$ is closed under isomorphisms.

\item \underline{Induction step}: We may assume that $\resdim_{\omega}(A) \geq 1$. Suppose that for any 
$\mathbb{E}$-triangle $A' \to B' \to C'\dashrightarrow$ with $C' \in \omega^\wedge$ and $\resdim_{\omega}(A') \leq n-1$, one has that $B' \in \omega^\wedge$. 

On the one hand, by part (1), there is an $\mathbb{E}$-triangle $C' \to P \to C
\dashrightarrow$ with $P \in \omega \cap \mcS$ and $C' \in \omega^\wedge$. Thus, by Proposition~\ref{Nakaoka 3.15} 
we can form the following commutative diagram
\[
\begin{tikzpicture}[description/.style={fill=white,inner sep=2pt}] 
\matrix (m) [matrix of math nodes, row sep=2.5em, column sep=2.5em, text height=1.25ex, text depth=0.25ex] 
{ 
{} & C' & C' \\
A & E & P  \\
A & B & C,  \\
};  
\path[->] 
(m-2-1) edge (m-2-2) (m-2-2) edge (m-2-3) 
(m-3-1) edge (m-3-2) (m-3-2) edge (m-3-3) 
(m-1-2) edge (m-2-2) (m-2-2) edge (m-3-2)
(m-1-3) edge (m-2-3) (m-2-3) edge (m-3-3)
; 
\path[-,font=\scriptsize]
(m-1-2) edge [double, thick, double distance=2pt] (m-1-3)
(m-2-1) edge [double, thick, double distance=2pt] (m-3-1)
;
\end{tikzpicture} 
\]
where $A\to E\to P\dashrightarrow$ is an
$\mathbb{E}$-triangle.
Since $\pd_{\omega^\wedge}(\omega \cap \mcS) = 0$, the conflation $A\to E\to P$ realizes the
split $\mathbb{E}$-extension, and then $E \simeq A \oplus P$ \cite[Rmk. 2.11-(1)]{Nakaoka1}. 

On the other hand, by part (1) again, 
there is an $\mathbb{E}$-triangle $A' \to W_0 \to A\dashrightarrow$ with $W_0 \in \omega \cap \mcS$ and $\resdim_{\omega}(A') = \resdim_{\omega}(A) - 1$. Thus, 
by considering the direct sum of this $\mathbb{E}$-triangle and $0 \to P \mathop{\to}\limits^{1} P\dashrightarrow$ we get an $\mathbb{E}$-triangle
$A' \to W_0 \oplus P \to A \oplus P\dashrightarrow$ with $W_0 \oplus P \in \omega$ since $\omega$ is 
closed under extensions.

Now, applying (ET4)$^{op}$ to the $\mathbb{E}$-triangles $A' \to W_0 \oplus P \to A \oplus P\dashrightarrow$ and
$C'\to A \oplus P\to B\dashrightarrow,$ we have the following 
commutative diagram in $\mcC$
\[
\begin{tikzpicture}[description/.style={fill=white,inner sep=2pt}] 
\matrix (m) [matrix of math nodes, row sep=2.5em, column sep=2.5em, text height=1.25ex, text depth=0.25ex] 
{ 
A' & E' & C' \\
A' & W_{0}\oplus P & A\oplus P  \\
{} & B & B,  \\
};  
\path[->] 
(m-1-1) edge (m-1-2) (m-1-2) edge (m-1-3)
(m-2-1) edge (m-2-2) (m-2-2) edge (m-2-3)
(m-1-2) edge (m-2-2) (m-2-2) edge (m-3-2)
(m-1-3) edge (m-2-3) (m-2-3) edge (m-3-3)
; 
\path[-,font=\scriptsize]
(m-3-2) edge [double, thick, double distance=2pt] (m-3-3)
(m-1-1) edge [double, thick, double distance=2pt] (m-2-1)
;
\end{tikzpicture} 
\]
where $A'\to E'\to C'\dashrightarrow$ and $E'\to W_0\oplus P\to B\dashrightarrow$ are $\mathbb{E}$-triangles.
By using induction hypothesis in the second column of the previous diagram
the result follows. 
\end{itemize}

\item  Given an object $C = C_1 \oplus C_2 \in \omega^\wedge$ with $n := \resdim_{\omega}(C)$,
we use induction on $n$ to prove that $C_1, C_2 \in \omega^\wedge$.
\begin{itemize}
\item \underline{Initial step}: The case $n = 0$ is trivial. 

\item \underline{Induction step}: Suppose that every direct summand in $\mcC$ of an object $D \in \omega^\wedge$ with $\resdim_{\omega}(D) \leq n-1$ belongs to $\omega^\wedge$. By part (1), there exists an $\mathbb{E}$-triangle 
\[
K \to W \xrightarrow{{\scriptsize \left(\begin{array}{c} h_1 \\ h_2 \end{array}\right)}} C \mathop{\dashrightarrow}^{\delta}
\]
with $W \in \omega \cap \mcS$ and $\resdim_{\omega}(K) = \resdim_{\omega}(C) - 1$. Thus, for $i = 1, 2$ we have 
$\mathbb{E}$-triangles $K_i \to W \xrightarrow{h_i} C_i \mathop{\dashrightarrow}\limits^{\delta_{i}}$ from
Lemma~\ref{lem: w^ cerrada por sumandos}. 

By taking the direct sum of $\delta_1$ and $\delta_2$ we get the following $\mathbb{E}$-triangle 
\[
K_1 \oplus K_2 \to W \oplus W \xrightarrow{{\scriptsize \left( \begin{array}{cc} h_1 & 0 \\ 0 & h_2 \end{array} \right)}} C \mathop{\dashrightarrow}\limits^{\delta_1 \oplus \delta_2},
\]
where $W \oplus W \in \omega$ since $\omega$ is closed under extensions. Then, by Lemma~\ref{lem: w^ cerrada por sumandos} and \cite[Prop. 3.17]{Nakaoka1}
we can form the following commutative diagram
\[
\begin{tikzpicture}[description/.style={fill=white,inner sep=2pt}]
\matrix (m) [matrix of math nodes, row sep=2em, column sep=2em, text height=1.25ex, text depth=0.25ex]
{ 
 K & W & C \\
 K_1 \oplus K_2 & W \oplus W & C \\
 W & W & {} \\
};
\path[->]
(m-1-1) edge (m-1-2) (m-1-2) edge (m-1-3)
(m-2-1) edge (m-2-2) (m-2-2) edge (m-2-3)
(m-1-2) edge node[right] {\footnotesize$\nabla$} (m-2-2)
(m-2-2) edge node[right] {\footnotesize$(-1\, 1)$} (m-3-2)
(m-1-1) edge (m-2-1) (m-2-1) edge (m-3-1)
;
\path[-,font=\scriptsize]
(m-3-1) edge [double, thick, double distance=2pt] (m-3-2)
(m-1-3) edge [double, thick, double distance=2pt] (m-2-3)
;
\end{tikzpicture}
\]
The first column gives rise to a conflation $K \to K_1 \oplus K_2 \to W $ which splits 
since $W \in \omega \cap \mcS$, $K \in \omega^\wedge$ and $\pd_{\omega^\wedge}(\omega \cap \mcS) = 0$. Thus, $K_1 \oplus K_2 \simeq K \oplus W$ and 
\[
\hspace{1.5cm} \resdim_{\omega}(K_1 \oplus K_2) = \resdim_{\omega}(K \oplus W) \leq \resdim_{\omega}(K) \leq n-1.
\]
By induction hypothesis, we have that $K_i \in \omega^\wedge$. Hence, from the $\mbE$-triangle $K_i \to W \xrightarrow{h_i} C_i \mathop{\dashrightarrow}\limits^{\delta_{i}}$ we finally have that $C_i \in \omega^\wedge$. 
\end{itemize}

\item The proof is similar to the given one in \cite[Lem. 3.5-(4)]{HMPcut}.
\end{enumerate}
\end{proof}

\begin{lemma}\label{lem:relative_equality}
Let $\mcC$ be with higher extension groups and $\mcX, \omega, \mcS \subseteq \mcC$ satisfying the following conditions:
\begin{enumerate}
\item $\omega$ is closed under extensions and isomorphisms; 

\item $\omega \cap \mcS$ is closed under direct summands and a relative generator in $\omega$;

\item $\omega \cap \mcS \subseteq \mcX \cap \mcS$;

\item $\mcX \cap \mcS$ is closed under cocones;  

\item $\id_{\mcX \cap \mcS}(\omega \cap \mcS) = 0$.
\end{enumerate} 
Then, $\omega \cap \mcS = \mcX \cap \omega^\wedge \cap \mcS$. 
\end{lemma}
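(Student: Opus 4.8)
The plan is to prove the two inclusions separately, putting all the real content into $\mcX \cap \omega^\wedge \cap \mcS \subseteq \omega \cap \mcS$. The inclusion $\omega \cap \mcS \subseteq \mcX \cap \omega^\wedge \cap \mcS$ is immediate: any $C \in \omega \cap \mcS$ lies in $\mcS$, lies in $\omega \subseteq \omega^\wedge$ since $\resdim_\omega(C) = 0$, and lies in $\mcX$ by condition (3) (as $\omega \cap \mcS \subseteq \mcX \cap \mcS$). So I would spend one line on this and then concentrate on the reverse containment.

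For the reverse inclusion I would fix $C \in \mcX \cap \omega^\wedge \cap \mcS$ and argue by strong induction on $n := \resdim_\omega(C)$ that $C \in \omega$; since $C \in \mcS$ already, this gives $C \in \omega \cap \mcS$. The base case $n = 0$ is clear because $\omega^\wedge_0 = \omega$ (using that $\omega$ is closed under isomorphisms). For the inductive step with $n \geq 1$, the first key move is to produce an $\mathbb{E}$-triangle $K \to F \to C \dashrightarrow$, with extension $\delta \in \mathbb{E}(C,K)$, such that $F \in \omega \cap \mcS$ and $\resdim_\omega(K) \leq n-1$. This is precisely the "relative generator lifted to $\omega^\wedge$" statement, and I would obtain it from Lemma~\ref{lem:technical_lemma_1} applied to a finite $\omega$-resolution of $C$ (taking $K = X_1$ and $F = F_0$ in its notation), gluing the resulting $\mathbb{E}$-triangles into an $\omega$-resolution of $C$ of length $n$ through $X_1$ exactly as in the proof of Lemma~\ref{lem:main_lemma}(1); the first syzygy then has $\omega$-resolution dimension at most $n-1$. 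I would point out that one cannot simply quote Lemma~\ref{lem:main_lemma}(1), whose hypotheses demand that $\omega \cap \mcS$ be $\omega$-projective, which is not assumed here; however that hypothesis is not used in this particular construction, only conditions (1) and (2) are.

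The second key move is to feed $K$ back into the induction. Since $F \in \omega \cap \mcS \subseteq \mcX \cap \mcS$ by (3) and $C \in \mcX \cap \mcS$, the conflation $K \to F \to C$ exhibits $K$ as a cocone of objects of $\mcX \cap \mcS$, so condition (4) forces $K \in \mcX \cap \mcS$; together with $K \in \omega^\wedge$ this places $K \in \mcX \cap \omega^\wedge \cap \mcS$ with $\resdim_\omega(K) \leq n-1$. By the induction hypothesis $K \in \omega$, and hence $K \in \omega \cap \mcS$. Now the extension $\delta \in \mathbb{E}(C,K) = \mathbb{E}^1(C,K)$ vanishes by condition (5), since $C \in \mcX \cap \mcS$, $K \in \omega \cap \mcS$ and $\id_{\mcX \cap \mcS}(\omega \cap \mcS) = 0$. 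Thus the triangle splits and $F \simeq K \oplus C$ by \cite[Rmk. 2.11-(1)]{Nakaoka1}, so $C$ is a direct summand of $F \in \omega \cap \mcS$; as $\omega \cap \mcS$ is closed under direct summands (condition (2)), we conclude $C \in \omega \cap \mcS \subseteq \omega$, which closes the induction.

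The main obstacle, and the place that needs care, is the orchestration within the inductive step: one must lift the relative generator from $\omega$ to $\omega^\wedge$ so as to split off an object $F$ of $\omega \cap \mcS$ (rather than merely of $\omega$), then verify via the cocone-closure hypothesis that the syzygy $K$ remains inside $\mcX \cap \mcS$ so that the induction hypothesis is applicable, and only afterwards invoke the relative-injectivity condition (5) to annihilate $\delta$. I expect the most delicate bookkeeping to be the estimate $\resdim_\omega(K) \leq n-1$ and the confirmation that the triangle $K \to F \to C$ can be built from Lemma~\ref{lem:technical_lemma_1} alone, without the $\omega$-projectivity assumption that Lemma~\ref{lem:main_lemma} would otherwise impose.
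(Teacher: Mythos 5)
Your proof is correct, and it is essentially the argument the paper intends: the paper's own ``proof'' of Lemma~\ref{lem:relative_equality} is only a citation to the abelian-category version in \cite[Lem.~4.1]{HMPcut}, and your induction on $\resdim_\omega(C)$ --- split off a generator approximation $K\to F\to C$ with $F\in\omega\cap\mcS$ and $\resdim_\omega(K)\le n-1$, push $K$ into $\mcX\cap\mcS$ via cocone-closure, apply the induction hypothesis, kill $\delta$ with $\id_{\mcX\cap\mcS}(\omega\cap\mcS)=0$, and conclude by closure of $\omega\cap\mcS$ under direct summands --- is exactly the natural extriangulated transcription of that argument. The one place where you add genuine value beyond the citation is the observation that one cannot simply quote Lemma~\ref{lem:main_lemma}(1) here, since its hypothesis that $\omega\cap\mcS$ be $\omega$-projective is not assumed in Lemma~\ref{lem:relative_equality}; your check that the needed $\mathbb{E}$-triangle $K\to F\to C$ comes from Lemma~\ref{lem:technical_lemma_1} alone (gluing the triangles $X_{j+1}\to F_j\to X_j$ into an $\omega$-resolution of $X_1$ of length $n-1$, using only conditions (1) and (2)) is accurate and worth recording.
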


\begin{proof}
The proof given in \cite[Lem. 4.1]{HMPcut} can be easily extended to the context of extriangulated 
categories.
\end{proof}


\section{\textbf{Cut Frobenius pairs vs. Cut AB-contexts}}

Let us commence with the analogs of left Frobenius pairs and weak Auslander-Buchweitz contexts
in extriangulated categories. These notions have been given previously, for example in \cite{BMPS, TGone, ma2021new}, however in this paper
we work through of a relativization that depends on a class of objects in an extriangulated category $\mcC$ with higher extension groups.
Following \cite{TGone}, we first recall the notion of left Frobenius pair in such extriangulated categories.

\begin{definition}\cite[Def. 4.7]{TGone}\label{def: left Frob}
Let $\mcC$ be an extriangulated category with higher extension groups and  $\mcX, \omega\subseteq \mcC$. We say that $(\mcX, \omega)$ is 
a \textbf{left Frobenius pair} in $\mcC$ if the following 
conditions are satisfied.
\begin{itemize}
\item[(lF1)] $\mcX$ is left thick.
\item[(lF2)] $\omega$ is closed under direct summands in $\mcC.$
\item[(lF3)] $\omega$ is an $\mcX$-injective relative cogenerator in $\mcX$.
\end{itemize}
If the dual conditions hold true for the pair $(\nu, \mcY)$; that is, (1) $\mcY$ is right thick; (2) $\nu$ is closed under direct summands in $\mcC;$ and (3) $\nu$ is a $\mcY$-projective 
relative generator in $\mcY$, we shall say that the pair $(\nu, \mcY)$ is a \textbf{right Frobenius pair in $\mcC$}.
\end{definition}

\begin{remark}\label{rmk: frob1}
For any left Frobenius pair $(\mcX, \omega)$ in $\mcC$, we have
that $\mcX^{\wedge}$ is a thick subcategory of $\mcC$ and $\id_{\mcX}(\omega^{\wedge})=0$ (see \cite[Props. 3.10, 3.14 \& 3.15 and Lem. 3.8]{MDZtheoryAB} or
\cite[Prop. 3.8 \& Lem. 3.10]{ma2021new}).
\end{remark}

\begin{example}\label{ex: GP, P} 
Let $\xi$ be a proper class of $\mathbb{E}$-triangles in $\mcC$ \cite[Def. 3.1]{HZZgorensteinness}. It is known that if $\mcC$ has
enough $\xi$-projectives and enough $\xi$-injectives, then $(\mcC, \mathbb{E}_{\xi}, \mathfrak{s}_{\xi})$ is an extriangulated category where $\mathfrak{s}_{\xi}:=\mathfrak{s}|_{\mathbb{E}_{\xi}}$ and 
$\mathbb{E}_{\xi}:=\mathbb{E}|_{\xi}$ \cite[Def. 4.1 \& Thm. 3.2]{HZZgorensteinness}. Thus, from \cite[Ex. 4.8]{TGone} we have that $(\mathcal{GP}(\xi), \mathcal{P}(\xi))$ is a 
left Frobenius pair in $(\mcC, \mathbb{E}_{\xi}, \mathfrak{s}_{\xi})$\footnote{Definition~\ref{def: left Frob} coincides with \cite[Def. 4.7]{TGone}
by considering $\xi$ as the class of all the $\mathbb{E}$-triangles in $\mcC$.} where
$\mathcal{GP}(\xi)$ is the class of $\xi$-$\mathcal{G}$projective objects and $\mathcal{P}(\xi)$ denotes the 
class of $\xi$-projective objects \cite[Defs. 4.8 \& 4.1]{HZZgorensteinness}. In particular, 
$\id_{\mathcal{GP}(\xi)}(\mathcal{P}(\xi)^{\wedge})=0$ \cite[Sections 4.1 \& 5.1]{HZZgorensteinness}.
\end{example}

Now, we extend the notion of cut Frobenius pair, given in \cite[Def. 3.6]{HMPcut}, for extriangulated
categories with higher extension groups. 

\begin{definition}\label{def: cut left Frobenius pair}
Let $\mcC$ be an extriangulated category with higher extension groups and $\mcX, \omega, \mcS\subseteq \mcC$. We say that $(\mcX, \omega)$ is a \textbf{left Frobenius pair cut on $\mcS$} if the
following conditions are satisfied.
\begin{enumerate}
\item $\mcX$ is left thick.
\item $(\mcX\cap \mcS, \omega\cap\mcS)$ is a left Frobenius pair in $\mcC.$
\item $\omega\cap \mcS$ is $\omega$-projective and a relative generator in $\omega.$
\item $\omega$ is closed under extensions and direct summands in $\mcC.$
\end{enumerate}

In case that, the dual conditions are satisfied for the pair $(\nu, \mcY)$ we shall say that it is a 
\textbf{right Frobenius pair cut on $\mcS$}.
\end{definition}

\begin{remark}\label{rmk: frob}
If $(\mcX,\omega)$ is a left Frobenius pair cut on $\mcS$, then
all the statements in Lemma \ref{lem:main_lemma} hold true (these results are also valid for the case $\mathcal{S}:=\mcC$). In particular, $\omega^\wedge$ is closed under extensions and direct summands in $\mcC.$ 
Moreover, since $(\mcX, \omega)$ is a left Frobenius pair cut on $\mcS$, we also have from Lemma~\ref{lem:relative_equality} that
$\omega\cap \mcS=\mcX\cap \mcS\cap \omega^{\wedge}$. Thus, $\omega\cap \mcS$ is closed under extensions since
$\mcX\cap \mcS$ and $\omega^{\wedge}$ are closed under extensions.
\end{remark}

Next, we extend the notion of cut Auslander-Buchweitz context, given in \cite[Def. 3.14]{HMPcut},
for extriangulated categories with higher extension groups.

\begin{definition}\label{def: cut left AB context}
Let $\mcC$ be an extriangulated category with higher extension groups,  $\mcA, \mcB, \mcS\subseteq \mcC$ and $\omega:=\mcA\cap\mcB$.
We say that $(\mcA, \mcB)$ is a \textbf{left weak AB context cut on 
$\mcS$} if the following conditions are satisfied.
\begin{enumerate}
\item $(\mcA, \omega)$ is a left Frobenius pair cut on $\mcS.$
\item $\mcB\cap \mcS$ is right thick.
\item $\mcB\cap \mcS\subseteq (\mcA\cap \mcS)^{\wedge}$.
\end{enumerate}
Dually, the notion of \textbf{right weak AB context cut on $\mcS$} is defined. 
\end{definition}

The following table summarizes all the properties mentioned previously.
The ones marked by $\checkmark$ follow by definition while those ones marked by 
$\star$ are a consequence from Remarks~\ref{rmk: frob1} and~\ref{rmk: frob}.$\,$

\begin{center}
\begin{tabular}{ccccc}
\hline
& & Closedness by  & &\\
& extensions & direct summands & cocones & cones\\
\hline
Left Frobenius $(\mcX, \omega)$ & & & & \\
\hline
$\mcX$ & $\checkmark$ & $\checkmark$ & $\checkmark$ &\\
$\omega$ & $\star$ & $\checkmark$ & &\\
$\mcX^{\wedge}$ & $\star$ & $\star$ & $\star$ & $\star$\\
\hline
Cut left Frobenius $(\mcX, \omega)$ & & & & \\
\hline
$\mcX$ & $\checkmark$ & $\checkmark$ & $\checkmark$ &\\
$\omega$ & $\checkmark$ & $\checkmark$ & &\\
$\mcX\cap \mcS$ & $\checkmark$ & $\checkmark$ & $\checkmark$ &\\
$\omega\cap \mcS$ & $\star$ & $\checkmark$ & &\\
$\omega^{\wedge}$ & $\star$ & $\star$ & &\\
\hline
Cut left weak AB-context $(\mcA, \mcB)$ & & & & \\
\hline
$\mcA$ & $\checkmark$ & $\checkmark$ & $\checkmark$ &\\
$\omega:=\mcA\cap \mcB$ & $\checkmark$ & $\checkmark$ & &\\
$\mcA\cap \mcS$ & $\checkmark$ & $\checkmark$ & $\checkmark$ &\\
$\omega\cap \mcS$ & $\star$ & $\checkmark$ & &\\
$\mcB\cap \mcS$ & $\checkmark$ & $\checkmark$ & & $\checkmark$\\
\hline 
\end{tabular}
\end{center}$\,$\\

In the following lines we show that
there exists a one-to-one correspondence between the class of left Frobenius
pairs cut on $\mcS$, denoted by $\mathfrak{F}_{\mcS}$, and the class of left 
weak AB-contexts on $\mcS$, denoted by $\mathfrak{C}_{\mcS}$. It is worth mentioning that
such correspondence will be given under equivalence relations whose definition we give below.

\begin{definition}\label{def:Frobenius_and_AB_relations}
Let $\mcC$ be an extriangulated category with higher extension groups and let $\mcS \subseteq \mcC$. For $(\mcX,\omega), (\mcX',\omega') \in \mfF_{\mcS}$ and $(\mcA,\mcB)$, $(\mcA',\mcB') \in \mfC_{\mcS}$, we shall say that:
\begin{enumerate}
\item $(\mcX,\omega)$ is \textbf{related} to $(\mcX',\omega')$ in $\mfF_{\mcS}$, denoted $(\mcX,\omega) \sim (\mcX',\omega')$, if $\mcX \cap \mcS = \mcX' \cap \mcS$ and $\omega \cap \mcS = \omega' \cap \mcS$;

\item $(\mcA,\mcB)$ is \textbf{related} to $(\mcA',\mcB')$ in $\mfC_{\mcS}$, denoted $(\mcA,\mcB) \sim (\mcA',\mcB')$, if $\mcA \cap \mcS = \mcA' \cap \mcS$ and $\mcA \cap \mcB \cap \mcS = \mcA' \cap \mcB' \cap \mcS$. 
\end{enumerate} 
\end{definition}

We denote by $[\mcX,\omega]_{\mfF_{\mcS}}$ the equivalence class of $(\mcX,\omega)$ in 
$\mfF_{\mcS} /\!\! \sim$. Similarly, $[\mcA,\mcB]_{\mfC_{\mcS}}$ denotes the equivalence class of $(\mcA,\mcB)$ in $\mfC_{\mcS} /\!\! \sim$. The following result generalizes \cite[Thm. 4.6]{HMPcut} in extriangulated categories.

\begin{theorem}[First correspondence theorem]\label{theo:correspondence_1}
Let $\mcC$ be an extriangulated category with higher extension groups and let $\mcS \subseteq \mcC$ be closed under cones and cocones. Then, the correspondence 
$\Phi_{\mcS}\colon \mfF_{\mcS} /\!\! \sim \mbox{} \to \mfC_{\mcS} /\!\! \sim,\; 
[\mcX,\omega]_{\mfF_{\mcS}} \mapsto [\mcX,\omega^\wedge]_{\mfC_{\mcS}},$ is bijective and the inverse 
$\Psi_{\mcS}$ is given by  
$[\mcA,\mcB]_{\mfC_{\mcS}} \mapsto [\mcA,\mcA \cap \mcB]_{\mfF_{\mcS}}$.
\end{theorem}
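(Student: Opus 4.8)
The plan is to prove that both correspondences are well defined on $\sim$-classes and then that they are mutually inverse; almost everything will be bookkeeping around the identity $\omega\cap\mcS=\mcX\cap\mcS\cap\omega^{\wedge}$ recorded in Remark~\ref{rmk: frob}, with a single genuinely new step. I would start with $\Psi_{\mcS}$, the trivial direction: if $(\mcA,\mcB)\in\mfC_{\mcS}$ and $\omega:=\mcA\cap\mcB$, then clause~(1) of Definition~\ref{def: cut left AB context} asserts \emph{verbatim} that $(\mcA,\mcA\cap\mcB)$ is a left Frobenius pair cut on $\mcS$, so $\Psi_{\mcS}$ indeed takes values in $\mfF_{\mcS}$. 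Both relations of Definition~\ref{def:Frobenius_and_AB_relations} are governed by the same two intersections, $\mcA\cap\mcS$ and $(\mcA\cap\mcB)\cap\mcS$; hence $\Psi_{\mcS}$ respects $\sim$. Symmetrically, $\Phi_{\mcS}$ respects $\sim$ once one knows that the image $(\mcX,\omega^{\wedge})$ carries AB-data $\mcX\cap\mcS$ and $(\mcX\cap\omega^{\wedge})\cap\mcS=\omega\cap\mcS$, the latter equality being exactly Remark~\ref{rmk: frob}: if $\mcX\cap\mcS=\mcX'\cap\mcS$ and $\omega\cap\mcS=\omega'\cap\mcS$, these data agree for $(\mcX,\omega^{\wedge})$ and $(\mcX',\omega'^{\wedge})$.

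Next I would verify that $\Phi_{\mcS}$ lands in $\mfC_{\mcS}$: for $(\mcX,\omega)\in\mfF_{\mcS}$, writing $\omega':=\mcX\cap\omega^{\wedge}$ for the core of the candidate context, I check the three clauses of Definition~\ref{def: cut left AB context}. For clause~(1) I first record $\omega'\cap\mcS=\mcX\cap\omega^{\wedge}\cap\mcS=\omega\cap\mcS$ (Remark~\ref{rmk: frob}), so that $(\mcX\cap\mcS,\omega'\cap\mcS)=(\mcX\cap\mcS,\omega\cap\mcS)$ is left Frobenius by hypothesis; closure of $\omega'$ under extensions and direct summands is inherited from $\mcX$ (left thick) and $\omega^{\wedge}$ (Remark~\ref{rmk: frob}); and the $\omega'$-projectivity and relative-generator properties of $\omega\cap\mcS$ inside $\omega'$ follow from Lemma~\ref{lem:main_lemma}(1): its $\mathbb{E}$-triangles $K\to F\to Y\dashrightarrow$ with $F\in\omega\cap\mcS\subseteq\mcX$ and $Y\in\omega'\subseteq\mcX$ keep $K$ inside $\mcX\cap\omega^{\wedge}=\omega'$ because $\mcX$ is closed under cocones, while $\mathbb{E}^{\geq1}(\omega\cap\mcS,\omega^{\wedge})=0$ restricts to $\omega'$. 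Clause~(3), $\omega^{\wedge}\cap\mcS\subseteq(\mcX\cap\mcS)^{\wedge}$, reduces by Lemma~\ref{lem:main_lemma}(4) (here the hypothesis that $\mcS$ is closed under cones and cocones enters) to $(\omega\cap\mcS)^{\wedge}\subseteq(\mcX\cap\mcS)^{\wedge}$, which holds by monotonicity of resolution dimension, since $\omega\cap\mcS\subseteq\mcX\cap\mcS$.

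The heart of the argument is clause~(2): that $\omega^{\wedge}\cap\mcS$ is right thick. Using $\mcS$ closed under cones and cocones, Lemma~\ref{lem:main_lemma}(4) rewrites $\omega^{\wedge}\cap\mcS=(\omega\cap\mcS)^{\wedge}$. I would then apply Lemma~\ref{lem:main_lemma} itself under the substitution $(\omega,\mcS)\rightsquigarrow(\omega\cap\mcS,\mcC)$, which is legitimate because $\omega\cap\mcS$ is closed under extensions (Remark~\ref{rmk: frob}) and, being $\omega$-projective, is self-orthogonal, hence an $(\omega\cap\mcS)$-projective relative generator in itself; parts~(2) and~(3) then give closure of $(\omega\cap\mcS)^{\wedge}$ under extensions and direct summands. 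The remaining and genuinely new point is closure under cones: given a conflation $A\to B\to C\dashrightarrow$ with $A,B\in(\omega\cap\mcS)^{\wedge}$, I would establish the resolution-dimension bound $\resdim_{\omega\cap\mcS}(C)\leq\max\{\resdim_{\omega\cap\mcS}(B),\resdim_{\omega\cap\mcS}(A)+1\}$, whence $C\in(\omega\cap\mcS)^{\wedge}$. This is the extriangulated analogue of the classical inequality in \cite{MDZtheoryAB}; I expect it to be proved by induction on $\resdim_{\omega\cap\mcS}(A)$, repeatedly splicing the syzygy triangle of $A$ furnished by Lemma~\ref{lem:main_lemma}(1) against $A\to B\to C$ through Proposition~\ref{Nakaoka 3.15} and axiom (ET4). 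This is the one step I anticipate will require real care.

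Finally I would establish bijectivity by computing the two composites on representatives, both collapsing to Remark~\ref{rmk: frob}. For $\Psi_{\mcS}\circ\Phi_{\mcS}$ one has $[\mcX,\omega]\mapsto[\mcX,\mcX\cap\omega^{\wedge}]$, and $(\mcX\cap\omega^{\wedge})\cap\mcS=\omega\cap\mcS$ yields $(\mcX,\mcX\cap\omega^{\wedge})\sim(\mcX,\omega)$. For $\Phi_{\mcS}\circ\Psi_{\mcS}$ one has $[\mcA,\mcB]\mapsto[\mcA,(\mcA\cap\mcB)^{\wedge}]$; applying Remark~\ref{rmk: frob} to the Frobenius pair $(\mcA,\mcA\cap\mcB)$ gives $\mcA\cap(\mcA\cap\mcB)^{\wedge}\cap\mcS=(\mcA\cap\mcB)\cap\mcS$, that is $(\mcA,(\mcA\cap\mcB)^{\wedge})\sim(\mcA,\mcB)$ in $\mfC_{\mcS}$. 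Thus $\Phi_{\mcS}$ and $\Psi_{\mcS}$ are mutually inverse bijections, and the only nonformal ingredient is the closed-under-cones step isolated above.
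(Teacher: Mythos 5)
Your overall architecture coincides with the paper's: well-definedness of both maps and their mutual inversion all reduce to the identity $\omega\cap\mcS=\mcX\cap\mcS\cap\omega^{\wedge}$ of Lemma~\ref{lem:relative_equality} and Remark~\ref{rmk: frob}; clause (1) of Definition~\ref{def: cut left AB context} is verified exactly as in the paper via Lemma~\ref{lem:main_lemma}-(1) together with cocone-closure of $\mcX$, and clause (3) via Lemma~\ref{lem:main_lemma}-(4). The one place where your proposal deviates in substance is the step you yourself flag as the critical one, and there the recipe you give would stall.

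To show that $(\omega\cap\mcS)^{\wedge}$ is closed under cones you propose to prove $\resdim_{\omega\cap\mcS}(C)\leq\max\{\resdim_{\omega\cap\mcS}(B),\resdim_{\omega\cap\mcS}(A)+1\}$ by induction on $\resdim_{\omega\cap\mcS}(A)$, splicing the syzygy triangle $A'\to F\to A\dashrightarrow$ of $A$ against $A\to B\to C\dashrightarrow$ through Proposition~\ref{Nakaoka 3.15} and (ET4). But none of the available gluing statements applies to that pair of conflations: Proposition~\ref{Nakaoka 3.15} needs two conflations with the same third term, its dual needs two with the same first term, (ET4) needs two composable inflations and (ET4)$^{op}$ two composable deflations, whereas here one conflation ends at $A$ and the other begins at $A$ (the deflation $F\to A$ and the inflation $A\to B$ do not compose to anything usable). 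The object that has to be resolved is $B$, not $A$: taking $B'\to F\to B\dashrightarrow$ with $F\in\omega\cap\mcS$ and applying (ET4)$^{op}$ to the composable deflations $F\to B$ and $B\to C$ produces an object $E$ with conflations $B'\to E\to A\dashrightarrow$ and $E\to F\to C\dashrightarrow$; then $E\in(\omega\cap\mcS)^{\wedge}$ by closure under extensions, and splicing a finite $\omega\cap\mcS$-resolution of $E$ onto $E\to F\to C$ finishes in a single step, with no induction on $\resdim_{\omega\cap\mcS}(A)$ at all. (Even the base case $\resdim_{\omega\cap\mcS}(A)=0$ of your proposed induction already requires this syzygy-of-$B$ move unless $\resdim_{\omega\cap\mcS}(B)=0$ as well.) This is exactly how the paper argues; the inequality you state is the correct target, but the proof must go through the middle term.
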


\begin{proof}
First, we show that the mappings $\Phi_{\mcS}$ and $\Psi_{\mcS}$ are well-defined. 

On the one hand,  by 
Definition~\ref{def: cut left AB context}, we have that $(\mcA,\mcA \cap \mcB) \in \mfF_{\mcS}$ for every $(\mcA,\mcB) \in \mfC_{\mcS}$. Moreover, it is clear that $\Psi_{\mcS}([\mcA,\mcB]_{\mfC_{\mcS}})$ does not depend on the chosen representative $(\mcA,\mcB) \in \mfC_{\mcS}$. On the other hand,
$\Phi_{\mcS}$ does not depend on representatives by Lemma \ref{lem:relative_equality}. So, we only need to prove that 
if $(\mcX,\omega) \in \mfF_{\mcS}$ then $(\mcX,\omega^\wedge) \in \mfC_{\mcS}$. For this, we see that the conditions in Definition~\ref{def: cut left AB context} hold. Indeed, 

\begin{itemize}
\item \underline{$(\mcX,\mcX \cap \omega^\wedge)$ is a left Frobenius pair cut on $\mcS$:} Clearly, $\mcX$ is closed under extensions, cocones and direct summands by definition. Now, by Lemma \ref{lem:relative_equality}, we have $(\mcX \cap \mcS, \mcX \cap \omega^\wedge \cap \mcS) = (\mcX \cap \mcS,\omega \cap \mcS)$ which is a left Frobenius pair in $\mcC$. Thus, it suffices to show that $\omega \cap \mcS$ is $(\mcX \cap \omega^\wedge)$-projective relative generator in $\mcX \cap \omega^\wedge$. Notice first that $\pd_{\omega^\wedge}(\omega \cap \mcS) = 0$ by Lemma \ref{lem:main_lemma}. Now, let $M \in \mcX \cap \omega^\wedge$. Using again Lemma \ref{lem:main_lemma}, there exists an $\mathbb{E}$-triangle $M' \to P \to M\dashrightarrow$ with $P \in \omega \cap \mcS$ and $M' \in \omega^\wedge$. Since $\mcX$ is closed under cocones and $\omega \cap \mcS \subseteq \mcX \cap \mcS$, we get that $M' \in \mcX \cap \omega^\wedge$. Finally, by Remark~\ref{rmk: frob} $\mcX \cap \omega^\wedge$ is closed under 
extensions and direct summands in $\mcC.$ 

\item \underline{$\omega^\wedge \cap \mcS$ is closed under extensions, cones and direct summands:} Notice
first that $\omega\cap \mcS$ is closed under extensions and direct summands by Remark~\ref{rmk: frob}. Then, $\omega^{\wedge}\cap 
\mcS=(\omega\cap \mcS)^{\wedge}$ is closed under extensions and direct summands by Lemma~\ref{lem:main_lemma}-(4).

We see that $\omega^{\wedge}\cap \mcS=(\omega\cap \mcS)^{\wedge}$ is closed under cones. Let $A\to B\to C
\dashrightarrow$ be an $\mathbb{E}$-triangle with $A, B\in (\omega\cap \mcS)^{\wedge}$. The result is clear if 
$\resdim_{\omega\cap \mcS}(B)=0$ (notice that $\omega\cap\mcS$ is closed under isomorphisms since
it is closed under extensions and direct summands). So, we can assume that $\resdim_{\omega\cap \mcS}(B)=:n\geq 1$.
From Lemma~\ref{lem:main_lemma}, we have an $\mathbb{E}$-triangle
$B'\to F\to B\dashrightarrow$ where $F\in \omega\cap \mcS$ and $\resdim_{\omega\cap \mcS}(B')\leq \resdim_{\omega\cap\mcS}(B)-1$. By considering the previous two $\mathbb{E}$-triangles
we get the following commutative diagram from (ET4)$^{op}$
\[
\begin{tikzpicture}[description/.style={fill=white,inner sep=2pt}] 
\matrix (m) [matrix of math nodes, row sep=2.5em, column sep=2.5em, text height=1.25ex, text depth=0.25ex] 
{ 
B' & E & A \\
B' & F & B  \\
{} & C & C,  \\
};  
\path[->] 
(m-1-1) edge (m-1-2) (m-1-2) edge (m-1-3)
(m-2-1) edge (m-2-2) (m-2-2) edge (m-2-3)
(m-1-2) edge (m-2-2) (m-2-2) edge (m-3-2)
(m-1-3) edge (m-2-3) (m-2-3) edge (m-3-3)
; 
\path[-,font=\scriptsize]
(m-3-2) edge [double, thick, double distance=2pt] (m-3-3)
(m-1-1) edge [double, thick, double distance=2pt] (m-2-1)
;
\end{tikzpicture} 
\]
where $E\in (\omega\cap \mcS)^{\wedge}$ since $A, B'$ belong to $(\omega\cap \mcS)^{\wedge}$ and 
$(\omega\cap \mcS)^{\wedge}$ is closed under extensions. Thus, from the second column in the previous diagram
we can deduce that $C\in (\omega\cap\mcS)^{\wedge}$. 

\item \underline{$\omega^\wedge \cap \mcS \subseteq (\mcX \cap \mcS)^\wedge$:} It follows from Lemma \ref{lem:main_lemma}. 
\end{itemize}
Finally, the mappings $\Psi_{\mcS}$ and $\Phi_{\mcS}$ are inverse to each other by Lemma \ref{lem:relative_equality}.
\end{proof}

\begin{remark}\label{rmk: corresp 1}$\,$
In Theorem~\ref{theo:correspondence_1} we can also provide an alternative proof of the fact that $\omega^{\wedge}\cap \mcS$ is closed under extensions, 
direct summands and cones, for any left Frobenius pair $(\mcX, \omega)$ cut on $\mcS$. These arguments are similar to the given
ones in \cite[Thm. 5.4]{BMPS} and \cite[Thm. 4.6]{HMPcut} but in extriangulated categories.

In fact, given a left Frobenius pair $(\mcX, \omega)$ cut on $\mcS$, we know that $(\mcX\cap\mcS, \omega\cap \mcS)$ is a left Frobenius pair in $\mcC$ by definition. Then, from Remark~\ref{rmk: frob1} we get that $(\mcX\cap \mcS)^{\wedge}$ is closed under extensions, direct summands and cones. Moreover, the equalities $$(\omega\cap \mcS)^{\wedge}=
(\mcX\cap \mcS)\cap (\mcX\cap \mcS)^{\perp}=(\mcX\cap \mcS)\cap (\omega\cap \mcS)^{\wedge}$$ hold by \cite[Prop. 3.9]{MDZtheoryAB}. Thus, by proceeding as in \cite[Prop. 4.18]{TGone} and by Lemma~\ref{lem:main_lemma}-(4), we obtain that 
$\omega^{\wedge}\cap \mcS=(\omega\cap \mcS)^{\wedge}$ is 
closed under extensions, direct summands and cones.
\end{remark}

\section{\textbf{Cut $n$-cotorsion pairs in extriangulated categories with higher extension groups}}

In this section we introduce the notion of \emph{cut $n$-cotorsion pair} for extriangulated categories with higher extension groups. This notion
unifies $n$-cotorsion~\cite[Def. 2.1]{HMP} and cut cotorsion~\cite[Def. 2.1]{HMPcut} pairs. 
Throughout this section, $n\geq 1$ denotes a positive integer and $\mcC$ an extriangulated category with higher extension groups.

\begin{definition}\label{def: CnCP ext} Let $\mcS, \mcA, \mcB\subseteq \mcC$. We say that $(\mcA, \mcB)$ is a \textbf{left $n$-cotorsion pair cut on $\mcS$} if 
the following conditions are satisfied.
\begin{enumerate}
\item[(1)] $\mcA$ is closed under direct summands in $\mcC.$
\item[(2)] $\mathbb{E}^{i}(\mcA\cap \mcS, \mcB)=0$ for every $1\leq i\leq n,$
\item[(3)] (Left completeness) For each $S\in \mcS$, there exists an $\mathbb{E}$-triangle
$K\to A\to S\dashrightarrow$ 
with $A\in \mcA$ and $K\in \mcB^{\wedge}_{n-1}$.
\end{enumerate}

Dually, we say that $(\mcA, \mcB)$ is a \textbf{right $n$-cotorsion pair cut on $\mcS$} if $\mcB$ is closed under direct 
summands in $\mcC,$ $\mathbb{E}^{i}(\mcA, \mcB\cap \mcS)=0$ for every $1\leq i\leq n$ and each $S\in \mcS$ admits an $\mathbb{E}$-triangle $S\to B\to C\dashrightarrow$ with $B\in\mcB$ and $C\in \mcA^{\vee}_{n-1}$. Finally, $(\mcA, \mcB)$ is said to be an \textbf{$n$-cotorsion pair
cut on $\mcS$} if it is both a left and right $n$-cotorsion pair cut on $\mcS$. In case there is no need to refer to the subcategory $\mcS$, we shall simply say that $(\mcA, \mcB)$ is a \textbf{(left and/or right) $n$-cotorsion cut}.
\end{definition}

\begin{remark} $\,$
\begin{enumerate}
\item In the case of $1$-cotorsion pairs cut on $\mcS,$ we can remove the condition that $\mcC$ has higher extension groups. Moreover, any $1$-cotorsion pair cut on $\mcS:=\mcC$ is a cotorsion pair in $\mcC$ \cite[Def. 4.1]{Nakaoka1}.

\item Any left $n$-cotorsion pair cut on $\mcS$ with $\mcS:=\mcC$ is a left $n$-cotorsion pair in $\mcC$, for any $n\geq 1$ \cite[Def. 3.1]{HZontherelation}.

\item Any left $1$-cotorsion pair cut on $\mcS$ is a left cotorsion pair cut along $\mcS$, for any $\mcS\subseteq \mcC$ when $\mcC$ is an abelian category~\cite[Def. 2.1]{HMPcut}.
\end{enumerate}
\end{remark}

In the case of the abelian or triangulated categories, the notion of $n$-cotorsion pair cut on $\mcS$ specializes as follows.

\begin{remark} Let $\mcC$ be an abelian (resp., triangulated) category 
and $\mcS, \mcA, \mcB\subseteq \mcC$. In this case,  $(\mcA, \mcB)$ is a \textbf{left $n$-cotorsion pair cut on $\mcS$} if 
the following conditions are satisfied:
\begin{enumerate}
\item[(1)] $\mcA$ is closed under direct summands in $\mcC;$
\item[(2)] $\Ext^{i}_{\mcC}(\mcA\cap \mcS, \mcB)=0$ (resp.,
$\Hom_{\mcC} (\mcA\cap \mcS, \mcB[i])=0$) for every $1\leq i\leq n$;
\item[(3)] For every object $S\in \mcS$, there exists an exact sequence 
$K\rightarrowtail A\twoheadrightarrow S$ (resp., a distinguished triangle $K\to A\to S\to K[1]$) 
with $A\in \mcA$ and $K\in \mcB^{\wedge}_{n-1}$.
\end{enumerate}
\end{remark} 

One source of examples comes from subcategories which are relative quasi-generators and quasi-cogenerators in
other ones. Recall that, for any abelian category
$\mcC$ and $\nu, \mcS\subseteq \mcC$,
$\nu$ is said to be a \emph{relative quasi-generator in
$\mcS$} if for every $S\in \mcS$ there exists an exact sequence $S'\rightarrowtail Y\twoheadrightarrow S$ with $Y\in \nu$ and $S'\in \mcS$. 
Actually, it is easy to prove that, when $\nu$ and $\mcS$ are closed under direct summands and
$\nu$ is an $\mcS$-projective relative quasi-generator in $\mcS$, the pair $(\nu, \mcS)$ is an $n$-cotorsion pair cut on $\mcS$ for every $n\geq 1$. Dually, for
every $n\geq 1$, $(\mcS, \omega)$ is an $n$-cotorsion pair cut on $\mcS$ whenever $\omega$ and $\mcS$ are 
closed under direct summands and $\omega$ is an
$\mcS$-injective relative quasi-cogenerator in $\mcS$. The following example illustrates the mentioned previously. 

\begin{example}\label{ex:carcaj} \cite[Ex. 5.3-(2)]{ZX}
Let $k$ be a field and $\Lambda$ be the quotient path $k$-algebra given by the quiver 
\[
\xymatrix{
1 \ar@/^/[r]^{\alpha} & 2\ar@/^/[l]^{\beta} & 3\ar[l]^{\gamma}
}
\]
with relations $\alpha \beta = 0 = \beta \alpha$. The indecomposable projective $\Lambda$-modules are 
\begin{align*}
P(1) & = \begin{tiny}\begin{array}{c} 1 \\ 2 \end{array} \end{tiny}, & P(2) & = \begin{tiny} \begin{array}{c} 2 \\ 1 \end{array} \end{tiny}, & & \text{and} & P(3) & = \begin{tiny} \begin{array}{c} 3 \\ 2 \\ 1 \end{array} \end{tiny},
\end{align*}
while the indecomposable injective $\Lambda$-modules are given by
\begin{align*}
I(1) & = \begin{tiny}\begin{array}{c} 3 \\ 2 \\ 1 \end{array}\end{tiny}, & I(2) & = \begin{tiny}\begin{array}{ccc} 1 & {} & 3 \\ {} & 2 \end{array}\end{tiny}, & & \text{and} & I(3) & = \begin{tiny}\begin{array}{c} 3 \end{array}\end{tiny}.
\end{align*}
On the other hand, the Auslander-Reiten quiver of $\Lambda$ is
\[
\xymatrix@R=0.3cm{
 & & {\begin{tiny}
\begin{array}{c}
3 \\
2\\
1
\end{array}
\end{tiny}}\ar[rd] & & & \\
 & {\begin{tiny}
\begin{array}{c}
2\\
1
\end{array}
\end{tiny}}\ar[ru] \ar[rd] \ar@{--}[rr] & & {\begin{tiny}
\begin{array}{c}
3\\
2
\end{array}
\end{tiny}}\ar[rd] \ar[rd] \ar@{--}[rr] & & \begin{tiny}
\begin{array}{c}1\end{array}
\end{tiny} \\
\begin{tiny}
\begin{array}{c} 1 \end{array}
\end{tiny} \ar[ru] \ar@{--}[rr] & & \begin{tiny}
\begin{array}{c} 2 \end{array}
\end{tiny} \ar[ru] \ar[rd] \ar@{--}[rr] & & {\begin{tiny}
\begin{array}{ccc}
1& &3\\
&2&
\end{array}
\end{tiny}}\ar[ru] \ar[rd] & \\
 & & & {\begin{tiny}
\begin{array}{c}
1\\
2
\end{array}
\end{tiny}}\ar[ru] \ar@{--}[rr] & & \begin{tiny}
\begin{array}{c} 3 \end{array}
\end{tiny}
}
\]
where the vertices $i$ represent the simple $\Lambda$-module $S(i)$. Consider the class $\mcX = \add(\begin{tiny}
\begin{array}{c}
1
\end{array}
\end{tiny} \oplus \begin{tiny}
\begin{array}{c}
2\\
1
\end{array}
\end{tiny} \oplus \begin{tiny}
\begin{array}{c}
2
\end{array}
\end{tiny} \oplus \begin{tiny}
\begin{array}{c}
1\\
2
\end{array}
\end{tiny})$ of $\Lambda$-modules which are direct summands of finite direct sums of $\begin{tiny}
\begin{array}{c}
1
\end{array}
\end{tiny} \oplus \begin{tiny}
\begin{array}{c}
2\\
1
\end{array}
\end{tiny} \oplus \begin{tiny}
\begin{array}{c}
2
\end{array}
\end{tiny} \oplus \begin{tiny}
\begin{array}{c}
1\\
2
\end{array}
\end{tiny}$. Then, $\mcX$ is a subcategory of $\modu(\Lambda)$ which is closed under extensions. Moreover, $\mcX$ is a Frobenius subcategory of $\modu(\Lambda)$ and the indecomposable projective-injective objects are precisely $\begin{tiny}
\begin{array}{c}
1\\
2
\end{array}
\end{tiny}$ and $\begin{tiny}
\begin{array}{c}
2 \\
1
\end{array}
\end{tiny}$. Thus, the class of projective objects in $\mcX$ is given by $\mcP(\mcX) = \add(P(1) \oplus P(2))$. Notice also that $\mcP(\modu(\Lambda)) = \add(P(1) \oplus P(2) \oplus P(3))$. Setting $(\nu, \mcS):=(\mathcal{P}(\Lambda),\mathcal{X})$ we have that 
$(\nu, \mcS)$ is an $n$-cotorsion pair cut on $\mcS$ for every $n\geq 1$.
An analogous example holds when $\mcC$ is a triangulated category by considering the notions of 
weak-generator and weak-cogenerator in \cite[Def. 5.1]{MendozaSaenzVargasSouto2}.
\end{example}

Other examples come from the theory of stratifying systems \cite{ES} 
and its generalizations \cite{DMS, MendozaSantiagoHomologicalSystems, Valenteexact, ATmixed}. Let
$\mcX$ be a class of $\Lambda$-modules. We
denote by $\mathfrak{F}(\mcX)$ the full subcategory of 
$\modu (\Lambda)$ whose objects are the  $\Lambda$-modules having a finite $\mcX$-filtration; that is, 
 $M$ belongs to $\mathfrak{F}(\mcX)$ if there is a finite chain
$$0=M_{0}\subseteq M_1\subseteq \cdots\subseteq
M_m=M$$
of submodules of $M$ such that $M_i/M_{i-1}$ is 
isomorphic to a $\Lambda$-module in $\mcX,$ for all
$i=1, 2, \ldots, m$.

\begin{definition}\cite[Def. 2.1]{DMS}\label{def: epss} 
An Ext-projective stratifying system (of size $t$) in $\modu(\Lambda)$ consists of a triple $(\Theta, \underline{Q}, \leq),$ where $\Theta=\{\Theta(i)\}_{i=1}^{t}$ is a set of non-zero $\Lambda$-modules, $\underline{Q}=\{Q(i)\}_{i=1}^{t}$ is a set of 
indecomposable $\Lambda$-modules and $\leq$ is a total order on the set $[1,t]:=\{1, 2, \ldots, t\}$ satisfying the following conditions:
\begin{enumerate}
\item $\Hom_{\Lambda}(\Theta(j), \Theta(i))=0$ for $j> i;$
\item for each $i\in [1,t]$, there is an exact sequence
$K(i)\rightarrowtail Q(i)\mathop{\twoheadrightarrow}\limits^{\beta_{i}} \Theta(i)$ in $\modu(\Lambda)$ such that $K(i)\in \mathfrak{F}(\{\Theta(j) : j> i\});$
\item $\Ext_{\Lambda}^{1}(Q, \Theta)=0$, where $Q:=\oplus_{i=1}^{t}Q(i)$.
\end{enumerate}
\end{definition}

The above notion can be extended on the setting of Artin triangulated categories \cite[Def. 3.2]{MendozaSantiagoHomologicalSystems} and it is called $\Theta$-projective systems \cite[Def. 5.2]{MendozaSantiagoHomologicalSystems}. 

\begin{example}\label{ex: sistest}

\underline{$\bullet$ Abelian case:} Let $(\Theta, \underline{Q}, \leq)$ be an Ext-projective stratifying system of size $t$ in $\modu(\Lambda).$ For each $j\in [1,t]$, we consider the class $\mcS_{j}:=\{M\in \mathfrak{F}(\Theta) : \min(M)=j\},$
where $\min(M)$ is defined to be the minimum $k$ such
that $[M: \Theta(k)]\neq 0$ \cite[Lem. 2.6]{DMS}. Notice that $(\mathrm{add}(Q), \mathfrak{F}(\{\Theta(i) : i> j\}))$ is a left $1$-cotorsion pair cut on $\mcS_{j}$ since for every $M\in \mcS_{j}$, there exists a short exact sequence
$
C\rightarrowtail Q'\twoheadrightarrow M
$
with $Q'\in \mathrm{add}(Q)$ and $C\in \mathfrak{F}(\{\Theta(i) : i> j\})$ by \cite[Prop. 2.10]{DMS}.
However this pair is not a right $1$-cotorsion pair cut on $\mcS_j.$ Indeed, if $M\in C_j$ admits an exact sequence 
$M\rightarrowtail C'\twoheadrightarrow Q''$
with $C'\in \mathfrak{F}(\{\Theta(i) : i> j\})$ and $Q''\in \mathrm{add}(Q)$ this
sequence splits due to
$\Ext_{R}^{1}(Q, \mathcal{F}(\Theta))=0$. Thus, 
$M\in \mathfrak{F}(\{\Theta(i) : i> j\})$ 
which is a contradiction.\\

\underline{$\bullet$ Triangulated case:} Let $(\Theta, \underline{Q}, \leq)$ be a $\Theta$-projective system of size $t$ in an Artin triangulated category $\mcC.$ In a similar way as we did in the abelian case, by using \cite[Defs. 5.2 \& 5.12, Cor. 4.4 \& Prop. 6.2-(c)]{MendozaSantiagoHomologicalSystems}, we get that $(\mathrm{add}(Q), \mathfrak{F}(\{\Theta(i) : i> j\}))$ is a left $1$-cotorsion pair cut on $\mathcal{S}_{j},$ 
for each $j\in [1,t].$ In this case, we also have that $(\mathrm{add}(Q)[1], \mathfrak{F}(\{\Theta(i) : i> j\})[1])$ is a 
right $1$-cotorsion pair cut on $\mcS_{j},$ 
for each $j\in [1,t]$.
\end{example}

As we can see in the previous example, from left (respectively, right) cut cotorsion pairs it is sometimes possible to get right (respectively, left) ones depending on the context. Triangulated categories allow us to say more about this duality and its shift functor plays a significant role as we will see in Section~\ref{Sec: applications}.
The following example is the extriangulated version of the previous one.

\begin{example}\label{ex: sistest ext}
Let $k$ be field and $\mcC:=(\mcC, \mathbb{E}, \mathfrak{s}, \mathbb{E}^{-1})$ be a $k$-linear Krull-Schmidt extriangulated category with
negative first extension, and let $(\Theta:=(\Theta(1), \Theta(2), \ldots, 
\Theta(t)), \mathbb{P})$ be a mixed stratifying
system of size $t$ in $\mcC$ \cite[Defs. 2.8 \& 3.13]{ATmixed}. Then,
\begin{enumerate}
\item $\add(\mathbb{P})$ is 
closed under direct summands and 
$\mathbb{E}(\add(\mathbb{P}), \mathfrak{F}(\Theta))=0$ \cite[Props. 3.15 \& 3.9]{ATmixed}.

\item If $\mathfrak{F}(\Theta)$
is $\mathbb{E}$-finite (that is, for each 
$M, N\in \mathfrak{F}(\Theta)$, the $k$-vector
space $\mathbb{E}(M, N)$ is finite dimensional)
and $\Theta(i)$ is a stone, for every 
$i\in [1,t],$ we have that 
$\mathfrak{F}(\Theta)$ admits a finite sequence
of universal extensions by $\Theta$ \cite[Prop. 3.3 \& Def. 3.2]{ATmixed}. Thus, every 
$M\in \mathfrak{F}(\Theta)\setminus {}^{\perp_{\mathbb{E}}}\mathfrak{F}(\Theta)$ admits a 
conflation $K\to P\to M$ with
$P\in \add(\mathbb{P})$ and $K\in \mathfrak{F}(\Theta(\geq j)),$ where $j$ is the minimum value
satisfying $\mathbb{E}(M, \Theta(j))\neq 0$ by
\cite[Prop. 3.10-(a)]{ATmixed}.
\end{enumerate}
Therefore, under the previous hypotheses, $(\add(\mathbb{P}), \mathfrak{F}(\Theta(\geq j))$ is a left $1$-cotorsion pair 
cut on $\mcS_{j}\subseteq \mcC,$ where
$\mcS_{j}$ consists of all $M\in \mathfrak{F}(\Theta)$ such that  
$j$ is the minimum value satisfying that $\mathbb{E}(M, \Theta(j))\neq 0.$
\end{example}

Let $(\mcC, \mathbb{E}, \mathfrak{s})$ be an extriangulated category and 
 $\mathcal{D}\subseteq\mcC$ be closed under extensions and having a zero object of $\mcC.$ Thus, $\mcD$ is closed under isomorphisms and an additive subcategory of $\mcC.$ It is well-known that $\mcD$ is an extriangulated 
category with the extriangulated structure induced by the restriction of $\mathbb{E}$, denoted by
$\mathbb{E}_{\mathcal{D}}$, and the restriction of $\mathfrak{s}$, denoted by $\mathfrak{s}_{\mathcal{D}}$ 
\cite[Rmk. 2.18]{Nakaoka1}. In case that $(\mcA, \mcB)$ is a cotorsion pair cut on $\mcS\subseteq \mcC$
with $\mcA, \mcB\subseteq \mcS$ and $\mcS$ is a subcategory closed under isomorphisms, extensions and direct summands of $\mcC$, the pair
$(\mcA,\mcB)$ can be considered as a complete cotorsion pair in the extriangulated category $\mathcal{S}=(\mathcal{S}, \mathbb{E}_{\mathcal{S}}, \mathfrak{s}_{\mathcal{S}})$ as well. However, in general, a cut cotorsion pair may not be a
complete cotorsion pair in a extriangulated category induced by a subcategory of $\mcC$.

\begin{example}\label{ex: GP, P2} Consider the setting in Example~\ref{ex: GP, P}. 
Notice first that, the existence of the pair $(\mathcal{GP}(\xi), \mathcal{P}(\xi))$ as a left Frobenius pair in $(\mcC, \mathbb{E}_{\xi}, \mathfrak{s}_{\xi})$ implies the one of $(\mathcal{P}(\xi), \mathcal{P}(\xi))$ as a left Frobenius pair in $(\mcC, \mathbb{E}_{\xi}, \mathfrak{s}_{\xi})$ as well (for the closedness conditions see \cite[Def. 4.1]{HZZgorensteinness} and \cite[Prop. 3.24]{Nakaoka1}). Thus, $\mathcal{P}(\xi)^{\wedge}$ is thick and $\id_{\mathcal{P}(\xi)}(\mathcal{P}(\xi)^{\wedge})=0$ from Remark~\ref{rmk: frob1}. We assert the following.

\begin{enumerate}
\item[$\bullet$] \underline{$(\mathcal{GP}(\xi),\mathcal{P}(\xi)^{\wedge})$
is a $1$-cotorsion pair cut on $\mathcal{GP}(\xi)^{\wedge}$ in $(\mcC, \mathbb{E}_{\xi}, \mathfrak{s}_{\xi})$.} Indeed, $\mathcal{P}(\xi)^{\wedge}$ is closed under direct summands and $\mathbb{E}_{\xi}^{i}(\mathcal{GP}(\xi), \mathcal{P}(\xi)^{\wedge})=0$ for 
all $i\geq 1$ by
paragraph above while $\mathcal{GP}(\xi)$ is closed under direct summands by \cite[Thm. 4.17]{HZZgorensteinness}. Moreover, for every $M\in \mathcal{GP}(\xi)^{\wedge}$, there exist 
$\mathbb{E}$-triangles $K\to G\to M\dashrightarrow$ and 
$M\to L\to G'\dashrightarrow$ in $\xi$ such that $G, G'\in \mathcal{GP}(\xi)$ and 
$K, L\in \mathcal{P}(\xi)^{\wedge}$ from \cite[Prop. 5.5]{HZZgorensteinness}.

\item[$\bullet$] \underline{$(\mathcal{P}(\xi), \mathcal{GP}(\xi)^{\perp})$
is a $1$-cotorsion pair cut on $\mathcal{P}(\xi)^{\wedge}$ in $(\mcC, \mathbb{E}_{\xi}, \mathfrak{s}_{\xi})$.} It is clear that $\mathcal{P}(\xi)$ and $\mathcal{GP}(\xi)^{\perp}$ are closed under direct summands. Now, $\mathbb{E}_{\xi}^{i}(\mathcal{P}(\xi), \mathcal{GP}(\xi)^{\perp})=0$ for 
all $i\geq 1$ since
$\mathcal{P}(\xi)\subseteq \mathcal{GP}(\xi)$. Finally, for every $M\in \mathcal{P}(\xi)^{\wedge}$, there exist two
$\mathbb{E}$-triangles $K\to P\to M\dashrightarrow$ and 
$M\to M\to 0\dashrightarrow$ in $\xi$ with 
$P\in \mathcal{P}(\xi)$ and 
$K\in \mathcal{P}(\xi)^{\wedge}\subseteq \mathcal{GP}(\xi)^{\perp}$ 
(see Example~\ref{ex: GP, P}).
\end{enumerate}

From the first pair, it is easy to 
see that $(\mathcal{GP}(\xi),\mathcal{P}(\xi)^{\wedge})$ is a $1$-cotorsion pair cut on $\mathcal{P}(\xi)^{\wedge}$. However, $(\mathcal{GP}(\xi),\mathcal{P}(\xi)^{\wedge})$ can not be considered as cotorsion pair in the
extriangulated category $\mathcal{P}(\xi)^{\wedge}$ since in general the containment $\mathcal{GP}(\xi)\subseteq
\mathcal{P}(\xi)^{\wedge}$ does not hold true \cite[Prop. 5.4]{HZZgorensteinness}.
\end{example}

\begin{proposition}\label{pro: 1cot<->complete cot}
Let $\mcC$ be an extriangulated category and $\mcS\subseteq \mcC$ be closed under extensions and direct summands. Then, $(\mcA, \mcB)$ is a $1$-cotorsion pair cut on $\mcS\subseteq \mcC$ and $\mcA, \mcB\subseteq \mcS$ if, and only if, $(\mcA, \mcB)$ is a cotorsion pair
in the extriangulated category $\mcS$.
\end{proposition}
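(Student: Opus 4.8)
The plan is to show that, once the definitions are unwound, the two sets of conditions coincide almost verbatim, the whole content being a careful bookkeeping argument enabled by the hypotheses $\mcA,\mcB\subseteq\mcS$ and $\mcS$ closed under extensions and direct summands. First I would record the basic compatibilities of the induced structure $(\mcS,\mathbb{E}_{\mcS},\mathfrak{s}_{\mcS})$ of \cite[Rmk. 2.18]{Nakaoka1}: since $\mcS$ is closed under direct summands it contains the zero object and is closed under isomorphisms, so it is genuinely extriangulated and the statement makes sense; moreover $\mathbb{E}_{\mcS}(B,A)=\mathbb{E}(B,A)$ for $A,B\in\mcS$, and a sequence $A\to B\to C$ with all three terms in $\mcS$ is a conflation in $\mcS$ if and only if it is a conflation in $\mcC$ (because $\mathfrak{s}_{\mcS}$ is the restriction of $\mathfrak{s}$ and $\mathbb{E}_{\mcS}$ the restriction of $\mathbb{E}$).

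Next I would specialize Definition~\ref{def: CnCP ext} to $n=1$. Because $\mcA,\mcB\subseteq\mcS$ we have $\mcA\cap\mcS=\mcA$ and $\mcB\cap\mcS=\mcB$, so condition (2) of both the left and right cut definitions reads $\mathbb{E}^{1}(\mcA,\mcB)=0$; as $\mathbb{E}^{1}=\mathbb{E}$, this is precisely (CP1) of \cite[Def. 4.1]{Nakaoka1}, and in particular no higher extension groups are needed. For the completeness conditions I would observe $\mcB^{\wedge}_{n-1}=\mcB^{\wedge}_{0}=\mcB$ and, dually, $\mcA^{\vee}_{0}=\mcA$: an object admitting a length-$0$ $\mcB$-resolution is isomorphic to an object of $\mcB$, and $\mcB$ is closed under isomorphisms since it is closed under direct summands. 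With this, the left completeness datum (an $\mathbb{E}$-triangle $K\to A\to S\dashrightarrow$ with $A\in\mcA$, $K\in\mcB$) matches the first conflation $V\to U\to C$ of (CP2) via $U=A$, $V=K$, $C=S$, and right completeness matches the second conflation $C\to V'\to U'$. Since in each case all three terms already lie in $\mcS$, these pass back and forth between $\mcC$ and $\mcS$ by the compatibility recorded above, so the completeness conditions for the cut pair (ranging over $S\in\mcS$) are literally (CP2) for the category $\mcS$.

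Finally I would reconcile the closure-under-direct-summands hypotheses. Here is the one place where $\mcS$ closed under direct summands in $\mcC$ is used essentially: if $\mcA$ (say) is closed under direct summands in $\mcS$ and $X\oplus Y\in\mcA$ with $X,Y\in\mcC$, then $X\oplus Y\in\mcA\subseteq\mcS$ forces $X,Y\in\mcS$, whence $X,Y\in\mcA$; the converse inclusion is trivial. Thus ``closed under direct summands in $\mcC$'' and ``closed under direct summands in $\mcS$'' agree for $\mcA$ and $\mcB$. Assembling the three matchings gives both implications simultaneously, establishing the equivalence.

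The argument is essentially a definition chase, so I do not anticipate a serious obstacle; the points demanding care are (i) the identifications $\mcB^{\wedge}_{0}=\mcB$ and $\mcA^{\vee}_{0}=\mcA$, which rest on direct-summand (hence isomorphism) closedness, and (ii) the faithful transfer of conflations between $\mcC$ and the induced structure on $\mcS$, which is exactly where the closure of $\mcS$ under extensions and direct summands enters.
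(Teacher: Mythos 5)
Your proof is correct and is essentially the paper's argument written out in full: the paper's own proof is a one-line remark that $\mcS$ is closed under the coproducts $X\oplus Y$ of its objects (hence carries the induced extriangulated structure and the direct-summand conditions transfer), leaving the rest as exactly the definition chase you perform. The points you flag as needing care --- $\mcB^{\wedge}_{0}=\mcB$, $\mcA^{\vee}_{0}=\mcA$, the matching of conflations between $\mcC$ and $\mcS$, and the agreement of the two notions of direct-summand closedness --- are precisely the content the paper leaves implicit, and you handle them correctly.
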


\begin{proof} It follows from the fact that any coproduct $X\oplus Y$ in  $\mcC,$ with $X,Y\in\mcS,$ belongs to $\mcS$ since $\mcS$ is closed under extensions and direct summands in $\mcC.$
\end{proof}

There are some constructions on extriangulated categories which are neither exact nor triangulated  \cite[Prop. 3.30]{Nakaoka1}. Some of these constructions had been worked, for instance, in \cite{happel1988triangulated, beligiannis2000relative} where it is established that, under certain 
conditions, quotients categories carry triangulated structure. With this in mind, in 
\cite{zhou2018triangulated} is unified different constructions of triangulated quotients categories. 
One of them gives Frobenius categories by considering functorially finite subcategories of a triangulated category
with Auslander-Reiten translation. Recall that a triangulated category $\mcC$ is Frobenius
if  $\mcC$ has enough $\mbE$-projectives and $\mbE$-injectives and $\mathcal{P}_{\mbE}(\mcC)=\mathcal{I}_{\mbE}(\mcC)$. Specifically, they proved the following result. 

\begin{corollary}\cite[Cor. 4.10-(2)]{zhou2018triangulated}
Let $\mcC$ be a triangulated category
with Auslander-Reiten translation $\tau$,
and $\mcX$ a functorially finite 
subcategory of $\mcC$, which satisfies
$\tau \mcX=\mcX$. For any $A, C\in \mcC$, define 
$\mathbb{E}'(C, A):=\mcC(C, A[1])$ to be the collection of all equivalence classes of triangles of the form
$A\mathop{\to}\limits^{f} B\mathop{\to}\limits^{g} C\to A[1]$,
where $f$ is $\mcX$-monic \cite[Def. 3.1]{zhou2018triangulated} and let $\mathfrak{s}'(\delta):=[A\mathop{\to}\limits^{f} B\mathop{\to}\limits^{g} C]$ , for any $\delta\in \mathbb{E}'(C, A)$. Then,
$(\mcC, \mathbb{E}', \mathfrak{s}')$ is a 
Frobenius extriangulated category
whose $\mbE'$-projective objects are precisely $\mcX$.
\end{corollary}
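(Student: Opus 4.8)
This is stated as \cite[Cor.~4.10-(2)]{zhou2018triangulated}, so in our exposition it would be cited rather than reproved; nonetheless, the argument can be organised as follows. The overall strategy is to realise $\mathbb{E}'$ as a \emph{closed additive sub-bifunctor} of the canonical biadditive functor $\mathbb{E}=\mcC(-,-[1])$ attached to the triangulation, and then to read off the Frobenius structure from the functorial finiteness of $\mcX$ together with the stability condition $\tau\mcX=\mcX$. First I would characterise the candidate group $\mathbb{E}'(C,A)\subseteq \mcC(C,A[1])$ as the set of those $\delta\colon C\to A[1]$ for which $\psi[1]\circ\delta=0$ for every $\psi\in\mcC(A,X)$ and every $X\in\mcX$; rotating the defining triangle and applying $\mcC(-,X)$ shows, via the resulting long exact sequence, that this is exactly the condition that the inflation $f$ be $\mcX$-monic. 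Written this way, $\mathbb{E}'(C,A)$ is an intersection of kernels of group homomorphisms, hence a subgroup, and the characterisation is manifestly functorial in both variables, so $\mathbb{E}'$ is an additive sub-bifunctor.

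The same computation, run with $\mcC(X,-)$ and the Auslander--Reiten--Serre duality $\mcC(X,C[1])\cong D\,\mcC(C,\tau X)$ (with $D$ the duality over the base and using $\tau\mcX=\mcX$), yields the key symmetry that $f$ is $\mcX$-monic if and only if the deflation $g$ is $\mcX$-epic; this makes the class of realised triangles self-dual. Granting the proper-class closure axioms for this class, the restriction $\mathfrak{s}'=\mathfrak{s}|_{\mathbb{E}'}$ is an additive realisation satisfying (ET1)--(ET4), by the same principle that in Example~\ref{ex: GP, P} lets a proper class induce an extriangulated structure (cf.\ \cite[Thm.~3.2]{HZZgorensteinness}). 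Next I would produce enough $\mathbb{E}'$-projectives and injectives. Given $C\in\mcC$, functorial finiteness supplies a right $\mcX$-approximation $g\colon X\to C$; completing $g$ to a triangle $A\xrightarrow{f}X\xrightarrow{g}C\dashrightarrow$ gives, since $g$ is $\mcX$-epic, an $\mathbb{E}'$-triangle with middle term in $\mcX$. Dually, left $\mcX$-approximations yield $\mathbb{E}'$-triangles $C\to X'\to B\dashrightarrow$ with $X'\in\mcX$.

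Finally, to compute the projectives: if $P\in\mcX$ then for any $\mathbb{E}'$-triangle $A\to B\xrightarrow{g}C\dashrightarrow$ the deflation $g$ is $\mcX$-epic by the symmetry above, so $\mcC(P,g)$ is surjective and $P$ is $\mathbb{E}'$-projective. Conversely, if $P$ is $\mathbb{E}'$-projective, applying $\mcC(P,-)$ to the approximation triangle $A\to X\xrightarrow{g}P\dashrightarrow$ lets us lift $1_P$ through $g$, so $P$ is a direct summand of $X\in\mcX$ and hence $P\in\mcX$. This establishes $\mcP_{\mathbb{E}'}(\mcC)=\mcX$, and dually $\mcI_{\mathbb{E}'}(\mcC)=\mcX$; together with the existence of enough projectives and injectives, the equality $\mcP_{\mathbb{E}'}(\mcC)=\mcX=\mcI_{\mathbb{E}'}(\mcC)$ shows that $(\mcC,\mathbb{E}',\mathfrak{s}')$ is Frobenius with $\mathbb{E}'$-projectives exactly $\mcX$.

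The main obstacle is the self-duality underlying Steps~1 and~3: proving that $\mcX$-monic and $\mcX$-epic cut out the \emph{same} class of triangles, and that the $\mathbb{E}'$-projectives and $\mathbb{E}'$-injectives coincide. Both rest squarely on the Auslander--Reiten duality formula combined with $\tau\mcX=\mcX$; without $\tau$-stability one obtains at best a (possibly non-Frobenius) extriangulated structure. A secondary technical point is verifying the octahedral closure making the $\mcX$-monic triangles a genuine proper class, which is where (ET4) has to be checked by hand.
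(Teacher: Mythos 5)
The paper does not prove this statement: it is imported verbatim as \cite[Cor.~4.10-(2)]{zhou2018triangulated} and used only as a source of examples (Example~\ref{ex: ni exact ni triang}), so there is no in-paper argument to compare yours against. Judged on its own terms, your reconstruction is sound and follows the route one would expect. The characterisation of $\mathbb{E}'(C,A)$ as $\{\delta : \psi[1]\circ\delta=0 \text{ for all } \psi\in\mcC(A,X),\, X\in\mcX\}$ is correct (rotate the triangle and apply $\mcC(-,X)$), and it does exhibit $\mathbb{E}'$ as an additive sub-bifunctor. Your key symmetry --- $f$ is $\mcX$-monic iff $g$ is $\mcX$-epic --- is also correct: under Serre duality the map $\psi\mapsto\psi[1]\delta$ on $\mcC(A,X')$ is dual to post-composition with $\tau\delta$ on $\mcC(X',\tau C)$, and $\tau\mcX=\mcX$ together with faithfulness of $\tau$ converts the vanishing of one family into the vanishing of the other; this is exactly where $\tau$-stability enters, as you say. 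Two points deserve tightening. First, the step you defer (``granting the proper-class closure axioms'') is the actual content of Zhou--Zhu's theorem; the efficient way to discharge it is to invoke the criterion that an additive sub-bifunctor of $\mathbb{E}=\mcC(-,-[1])$ yields an extriangulated substructure precisely when its inflations are closed under composition, and $\mcX$-monic morphisms visibly compose (extend $A\to X$ over $f$, then over $f'$), so (ET4) need not be checked by hand. Second, the identification of injectives is easier than you suggest: $X\in\mcX$ is $\mathbb{E}'$-injective directly from the definition of $\mcX$-monic, with no duality needed, whereas $\mcP_{\mathbb{E}'}(\mcC)=\mcX$ genuinely requires the symmetry plus the standing convention that $\mcX$ is closed under direct summands (needed to conclude $P\in\mcX$ from $P$ being a summand of an approximation). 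With those two remarks your outline matches the cited proof in structure and substance.
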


\begin{example}\label{ex: ni exact ni triang}
Let $\mcC$ be a triangulated category
with Auslander-Reiten translation $\tau$,
and $\mcX$ a functorially finite 
subcategory of $\mcC$, which satisfies
$\tau \mcX=\mcX$. Then, the pair $(\mcX, \mcC)$ is an $n$-cotorsion pair cut on $\mcS$
in the extriangulated category $(\mcC, \mathbb{E}', \mathfrak{s}')$,  for any $\mcS\subseteq \mcC$ and
any $n\geq 1$. If in addition $\{0\}\neq \mcX\subsetneqq \mcC$ then $(\mcC, \mathbb{E}', \mathfrak{s}')$ is 
neither exact nor triangulated \cite[Rmk. 4.11]{zhou2018triangulated}.
\end{example}

\section{\textbf{Cut AB-contexts vs. Cut $n$-cotorsion pairs}}

We begin by proving results that involve the concepts seen in Sections 3 and 4. 
We show that, under certain conditions, they are related in some sense. 
These results will be used in the last part of this section to prove the second correspondence theorem of this work.

\begin{proposition}\label{prop: ABcontext->cut1Cot}
Let $\mcC$ be an extriangulated category with higher extension groups, $(\mcA, \mcB)$ be a left weak AB context cut on $\mcS$ and $\omega:=\mcA\cap \mcB$. Then, the following statements hold true.
\begin{enumerate}
\item $(\omega\cap \mcS)^{\wedge}=\mcB\cap \mcS, \id_{\mcA\cap\mcS}(\mcB\cap\mcS)=0$ and $\Thick(\mcA\cap\mcS)=(\mcA\cap\mcS)^{\wedge}.$
\item $(\mcA\cap \mcS, \mcB\cap \mcS)$ is a 
1-cotorsion pair cut on $\Thick(\mcA\cap\mcS)$ with $\id_{\mcA\cap\mcS}(\mcB\cap\mcS)=0$.
\end{enumerate}
\end{proposition}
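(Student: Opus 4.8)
The plan is to prove part (1) first, with the description $\Thick(\mcA\cap\mcS)=(\mcA\cap\mcS)^{\wedge}$ as the easiest piece, then the equality $(\omega\cap\mcS)^{\wedge}=\mcB\cap\mcS$, and finally $\id_{\mcA\cap\mcS}(\mcB\cap\mcS)=0$ as an immediate corollary; afterwards part (2) is obtained by assembling these facts with the Auslander--Buchweitz approximations attached to the left Frobenius pair $(\mcA\cap\mcS,\omega\cap\mcS)$. Write $\mcY:=\omega\cap\mcS=\mcA\cap\mcB\cap\mcS$ and $\mcT:=\Thick(\mcA\cap\mcS)$. I use freely that, by Definition~\ref{def: cut left AB context}, $(\mcA\cap\mcS,\mcY)$ is a left Frobenius pair in $\mcC$, that $\mcB\cap\mcS$ is right thick (hence closed under extensions, direct summands and cones), and that $\mcB\cap\mcS\subseteq(\mcA\cap\mcS)^{\wedge}$. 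From Remark~\ref{rmk: frob1} applied to $(\mcA\cap\mcS,\mcY)$ I record that $(\mcA\cap\mcS)^{\wedge}$ is thick and that $\id_{\mcA\cap\mcS}(\mcY^{\wedge})=0$.

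For $\Thick(\mcA\cap\mcS)=(\mcA\cap\mcS)^{\wedge}$ I would argue by double inclusion: since $(\mcA\cap\mcS)^{\wedge}$ is thick and contains $\mcA\cap\mcS$, it contains $\mcT$; conversely, a straightforward induction on $\resdim_{\mcA\cap\mcS}$, using the first $\mathbb{E}$-triangle of a finite $(\mcA\cap\mcS)$-resolution together with the fact that thick subcategories are closed under cones, gives $(\mcA\cap\mcS)^{\wedge}\subseteq\mcT$. For the inclusion $\mcY^{\wedge}\subseteq\mcB\cap\mcS$ I run the same kind of induction on $\resdim_{\mcY}$: the base case is $\mcY\subseteq\mcB\cap\mcS$, and the inductive step realizes $C$ as the cone of an $\mathbb{E}$-triangle $Z\to W_0\to C\dashrightarrow$ with $W_0\in\mcY$ and $\resdim_{\mcY}(Z)$ one smaller, so $C\in\mcB\cap\mcS$ because $\mcB\cap\mcS$ is closed under cones.

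The heart of part (1) is the reverse inclusion $\mcB\cap\mcS\subseteq\mcY^{\wedge}$. Given $B\in\mcB\cap\mcS\subseteq(\mcA\cap\mcS)^{\wedge}$, I invoke the Auslander--Buchweitz precover attached to the Frobenius pair $(\mcA\cap\mcS,\mcY)$ to produce an $\mathbb{E}$-triangle $K\to X\to B\dashrightarrow$ with $X\in\mcA\cap\mcS$ and $K\in\mcY^{\wedge}$ (this is the standard relative approximation, cf.\ \cite{MDZtheoryAB}). By the inclusion already proven, $K\in\mcY^{\wedge}\subseteq\mcB\cap\mcS$; since $\mcB\cap\mcS$ is closed under extensions and $K,B\in\mcB\cap\mcS$, the middle term satisfies $X\in\mcB\cap\mcS$, whence $X\in\mcA\cap\mcB\cap\mcS=\mcY$. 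Thus $K\to X\to B\dashrightarrow$ has $K\in\mcY^{\wedge}$ and $X\in\mcY\subseteq\mcY^{\wedge}$, and because $\mcY^{\wedge}=(\omega\cap\mcS)^{\wedge}$ is closed under cones (Remark~\ref{rmk: corresp 1}, via Lemma~\ref{lem:main_lemma}) I conclude $B\in\mcY^{\wedge}$. This establishes $(\omega\cap\mcS)^{\wedge}=\mcB\cap\mcS$, and then $\id_{\mcA\cap\mcS}(\mcB\cap\mcS)=\id_{\mcA\cap\mcS}(\mcY^{\wedge})=0$ follows at once from Remark~\ref{rmk: frob1}, completing part (1).

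For part (2) I would verify the six conditions of Definition~\ref{def: CnCP ext} for $n=1$ and cut class $\mcT=(\mcA\cap\mcS)^{\wedge}$. Closedness under direct summands of $\mcA\cap\mcS$ and of $\mcB\cap\mcS$ comes from left and right thickness, respectively; the orthogonality requirements $\mathbb{E}^{1}((\mcA\cap\mcS)\cap\mcT,\mcB\cap\mcS)=0$ and $\mathbb{E}^{1}(\mcA\cap\mcS,(\mcB\cap\mcS)\cap\mcT)=0$ are special cases of $\id_{\mcA\cap\mcS}(\mcB\cap\mcS)=0$ from part (1). For left completeness, any $T\in\mcT=(\mcA\cap\mcS)^{\wedge}$ admits the AB precover $K\to X\to T\dashrightarrow$ with $X\in\mcA\cap\mcS$ and $K\in\mcY^{\wedge}=\mcB\cap\mcS=(\mcB\cap\mcS)^{\wedge}_{0}$; dually, the AB preenvelope $T\to W\to X'\dashrightarrow$ with $W\in\mcY^{\wedge}=\mcB\cap\mcS$ and $X'\in\mcA\cap\mcS=(\mcA\cap\mcS)^{\vee}_{0}$ gives right completeness. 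Collecting these shows that $(\mcA\cap\mcS,\mcB\cap\mcS)$ is a $1$-cotorsion pair cut on $\mcT$, with $\id_{\mcA\cap\mcS}(\mcB\cap\mcS)=0$ already recorded. The step I expect to be the main obstacle is precisely the crux inclusion $\mcB\cap\mcS\subseteq\mcY^{\wedge}$: it depends on the AB precover landing in the \emph{cut} classes $\mcA\cap\mcS$ and $(\omega\cap\mcS)^{\wedge}$ (which is why one must work with the pair $(\mcA\cap\mcS,\mcY)$ directly rather than with $(\mcA,\omega)$), and on the cone-closedness of $(\omega\cap\mcS)^{\wedge}$, a property the paper derives through Lemma~\ref{lem:main_lemma} and the orthogonal description of $(\omega\cap\mcS)^{\wedge}$; one should therefore ensure those inputs are available here (e.g.\ that $\mcS$ is closed under cones and cocones).
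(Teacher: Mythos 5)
Your proposal follows the paper's proof in all essentials: the crux inclusion $\mcB\cap\mcS\subseteq(\omega\cap\mcS)^{\wedge}$ is obtained, exactly as in the paper, by taking the Auslander--Buchweitz approximation $K\to X\to B\dashrightarrow$ attached to the left Frobenius pair $(\mcA\cap\mcS,\omega\cap\mcS)$ and using extension-closure of $\mcB\cap\mcS$ to force $X\in\mcA\cap\mcB\cap\mcS=\omega\cap\mcS$, and part (2) is then assembled from the same two approximation triangles. The only substantive deviation is your last step of that inclusion: you conclude $B\in(\omega\cap\mcS)^{\wedge}$ by appealing to cone-closure of $(\omega\cap\mcS)^{\wedge}$ via Remark~\ref{rmk: corresp 1} and Lemma~\ref{lem:main_lemma}, and you correctly worry that those inputs are established under the hypothesis that $\mcS$ is closed under cones and cocones, which Proposition~\ref{prop: ABcontext->cut1Cot} does not assume. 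That detour is unnecessary: from the $\mathbb{E}$-triangle $K\to X\to B\dashrightarrow$ with $X\in\omega\cap\mcS$ and $K\in(\omega\cap\mcS)^{\wedge}$ one gets $\resdim_{\omega\cap\mcS}(B)\leq\resdim_{\omega\cap\mcS}(K)+1<\infty$ directly, by splicing a finite $(\omega\cap\mcS)$-resolution of $K$ onto this triangle; this is how the paper concludes, and it removes the only hypothesis-dependence you flagged. With that one step replaced the argument is complete; your direct double-inclusion proof of $\Thick(\mcA\cap\mcS)=(\mcA\cap\mcS)^{\wedge}$ (where the paper simply cites an external reference) and your explicit verification of the conditions of Definition~\ref{def: CnCP ext} in part (2) are both fine.
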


\begin{proof}
(1) The containment $(\omega\cap \mcS)^{\wedge}\subseteq\mcB\cap \mcS$ is clear since $\mcB\cap \mcS$ is closed under cones. On the other hand, let $B\in \mcB\cap \mcS$. Since $(\mcA\cap \mcS, \omega\cap \mcS)$ is
a left Frobenius pair in $\mcC$ and $\mcB\cap\mcS\subseteq (\mcA\cap\mcS)^{\wedge}$, there is an $\mathbb{E}$-triangle 
$K\to A\to B\dashrightarrow$ with $K\in (\omega\cap \mcS)^{\wedge}\subseteq \mcB\cap\mcS$ and $A\in \mcA\cap\mcS$ from \cite[Thm. 3.7]{MDZtheoryAB}. Moreover, $A\in \omega\cap \mcS$ due to $\mcB\cap\mcS$ is closed under extensions.
Thus, by considering $K\to A\to B\dashrightarrow$ we get that $B\in (\omega\cap\mcS)^{\wedge}$. Hence, 
$(\omega\cap \mcS)^{\wedge}=\mcB\cap \mcS$ holds true. Moreover, from \cite[Lem. 3.8]{MDZtheoryAB} we also have that $\id_{\mcA\cap \mcS}(\mcB\cap\mcS)=\id_{\mcA\cap\mcS}((\omega\cap\mcS)^{\wedge})=
\id_{\mcA\cap\mcS}(\omega\cap \mcS)=0$. Finally, the last equality follows from \cite[Prop. 3.8]{ma2021new}.

(2) Since $\mcA\cap\mcS$ and $\mcB\cap\mcS$ are closed under direct summands (by definition),
$\mathbb{E}(\mcA\cap\mcS, \mcB\cap\mcS)=0$ by the paragraph above and the existence of the $\mathbb{E}$-triangles in
Definition~\ref{def: CnCP ext} follows by \cite[Thm. 3.7]{MDZtheoryAB}, we can conclude that 
$(\mcA\cap\mcS, \mcB\cap\mcS)$ is a 1-cotorsion pair cut on $(\mcA\cap\mcS)^{\wedge}$.
\end{proof}

\begin{lemma}\label{lem:Frobenius_from_cut_cotorsion}
Let $\mcC$ be an extriangulated category with higher extension groups and let $(\mcF,\mcG)$ be a $1$-cotorsion pair cut on
$\Thick(\mcF)$ such that $\id_{\mcF}(\mcG) = 0$. Then,
the following statements hold true.
\begin{enumerate}
\item $(\mcF,\mcF \cap \mcG)$ is a left Frobenius pair in $\mcC$ and $\Thick(\mcF)=\mcF^{\wedge}$.

\item $\mcF \cap \mcG = \mcF \cap \mcF^{\perp_1} = \mcF \cap (\mcF \cap \mcG)^\wedge$.

\item $(\mcF \cap \mcG)^\wedge = \mcF^{\perp} \cap \mcF^\wedge= \mcG \cap \mcF^{\wedge}$.
\end{enumerate}
\end{lemma}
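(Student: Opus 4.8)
\emph{Setup and strategy.} Write $\omega := \mcF \cap \mcG$ and note that $\mcS := \Thick(\mcF)$ contains $\mcF$, so $\mcF \cap \mcS = \mcF$. Unwinding Definition~\ref{def: CnCP ext} with $n = 1$, the hypotheses amount to: $\mcF$ and $\mcG$ are closed under direct summands; $\mathbb{E}^{1}(\mcF, \mcG) = 0$, strengthened to $\mathbb{E}^{\geq 1}(\mcF, \mcG) = 0$ by $\id_{\mcF}(\mcG) = 0$; and for each $S \in \mcS$ there are $\mathbb{E}$-triangles $K \to A \to S \dashrightarrow$ with $A \in \mcF$, $K \in \mcG$ (left completeness) and $S \to B \to C \dashrightarrow$ with $B \in \mcG$, $C \in \mcF$ (right completeness). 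The single device driving every step is the following: whenever the defining class of an $\mathbb{E}$-triangle lies in a morphism-orthogonality group that the hypotheses force to vanish, the triangle splits, and closure under direct summands then places one of its terms in $\mcF$, $\mcG$, or $\omega^{\wedge}$.

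\emph{Part (1).} The crux is that $\mcF$ is left thick; summand-closure is given, so it remains to treat extensions and cocones. For an $\mathbb{E}$-triangle $A \to B \to C \dashrightarrow$ with $A, C \in \mcF$, Lemma~\ref{longExSeq} together with $\mathbb{E}^{1}(\mcF, \mcG) = 0$ yields $\mathbb{E}^{1}(B, \mcG) = 0$; since $B \in \mcS$, the left-completeness triangle $K \to A' \to B \dashrightarrow$ has class in $\mathbb{E}(B, K) = 0$, hence splits, so $B$ is a summand of $A' \in \mcF$. The cocone case is identical but uses $\mathbb{E}^{2}(\mcF, \mcG) = 0$ to kill $\mathbb{E}^{1}(A, \mcG)$---this is precisely where the full vanishing $\id_{\mcF}(\mcG) = 0$, not merely the degree-one cotorsion condition, is needed. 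Then (lF2) is the given summand-closure of $\omega$, and for (lF3) one has $\id_{\mcF}(\omega) = 0$ (as $\omega \subseteq \mcG$) and, for $X \in \mcF$, the right-completeness triangle $X \to B \to C \dashrightarrow$ has $B \in \mcF \cap \mcG = \omega$ by extension-closure, exhibiting $\omega$ as a relative cogenerator. Finally $\Thick(\mcF) = \mcF^{\wedge}$ follows from \cite[Prop. 3.8]{ma2021new} applied to the left Frobenius pair just obtained.

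\emph{Part (2).} For $\omega = \mcF \cap \mcF^{\perp_1}$, the inclusion ``$\subseteq$'' is $\mathbb{E}^{1}(\mcF, \omega) = 0$, while for ``$\supseteq$'' I take $x \in \mcF \cap \mcF^{\perp_1}$ and split the cogenerator triangle $x \to W \to x' \dashrightarrow$ (its class sits in $\mathbb{E}(x', x) = 0$), realizing $x$ as a summand of $W \in \mcG$. For $\omega = \mcF \cap \omega^{\wedge}$ the inclusion ``$\subseteq$'' is clear; ``$\supseteq$'' I reduce to the auxiliary fact $\omega^{\wedge} \subseteq \mcF^{\perp}$, proved by induction on $\resdim_{\omega}$ via a syzygy triangle and Lemma~\ref{longExSeq} (base case $\id_{\mcF}(\omega) = 0$), since then $\mcF \cap \omega^{\wedge} \subseteq \mcF \cap \mcF^{\perp} \subseteq \mcF \cap \mcF^{\perp_1} = \omega$.

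\emph{Part (3).} I will establish the cyclic chain $\omega^{\wedge} \subseteq \mcF^{\perp} \cap \mcF^{\wedge} \subseteq \mcG \cap \mcF^{\wedge} \subseteq \omega^{\wedge}$, which forces the two displayed equalities. The first inclusion combines $\omega^{\wedge} \subseteq \mcF^{\wedge}$ (from $\omega \subseteq \mcF$) with the auxiliary $\omega^{\wedge} \subseteq \mcF^{\perp}$ from Part (2). For the second, an object $C \in \mcF^{\perp} \cap \mcF^{\wedge} \subseteq \mcS$ admits a right-completeness triangle $C \to B \to C' \dashrightarrow$ whose class in $\mathbb{E}(C', C) = 0$ splits, exhibiting $C$ as a summand of $B \in \mcG$. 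The third inclusion is the main obstacle: here I invoke the co-approximation half of Auslander--Buchweitz theory \cite[Thm. 3.7]{MDZtheoryAB}, which for $C \in \mcF^{\wedge}$ produces an $\mathbb{E}$-triangle $C \to Y \to X \dashrightarrow$ with $Y \in \omega^{\wedge}$ and $X \in \mcF$; for $C \in \mcG$ its class lies in $\mathbb{E}(X, C) = 0$, so the triangle splits and $C$ is a summand of $Y \in \omega^{\wedge}$, which is summand-closed by Lemma~\ref{lem:main_lemma}. The delicate point throughout is bookkeeping the exact orientation of the AB approximation triangles---confirming that \cite[Thm. 3.7]{MDZtheoryAB} supplies the pre-envelope form $C \to Y \to X \dashrightarrow$ and not only the pre-cover form used in Proposition~\ref{prop: ABcontext->cut1Cot}---together with the initial verification that $\mcF$ is left thick, on which all later splitting arguments depend.
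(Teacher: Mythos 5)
Your proof is correct, and for parts (1) and (2) it follows essentially the same route as the paper, only writing out in full what the paper compresses: the paper first records the identities $\mcF={}^{\perp_1}\mcG\cap\Thick(\mcF)$ and $\mcG\cap\Thick(\mcF)=\mcF^{\perp_1}\cap\Thick(\mcF)$ (obtained by exactly your splitting of the completeness triangles) and then asserts closure under extensions and cocones from these together with $\id_{\mcF}(\mcG)=0$, delegating $\id_{\mcF}\bigl((\mcF\cap\mcG)^{\wedge}\bigr)=0$ to \cite[Prop.\ 3.9]{MDZtheoryAB} where you run the induction on $\resdim_{\omega}$ yourself; your explicit use of $\mathbb{E}^{2}(\mcF,\mcG)=0$ for cocone-closure is precisely the point the paper leaves implicit. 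Part (3) is where you genuinely diverge: the paper quotes $(\mcF\cap\mcG)^{\wedge}=\mcF^{\perp}\cap\mcF^{\wedge}$ from \cite[Lem.\ 3.10]{ma2021new} and then gets $\mcF^{\perp}\cap\mcF^{\wedge}=\mcG\cap\mcF^{\wedge}$ from the splitting argument in one direction and the one-line observation $\mcG\subseteq\mcF^{\perp}$ (immediate from $\id_{\mcF}(\mcG)=0$) in the other, whereas you close a three-term cycle whose nontrivial leg $\mcG\cap\mcF^{\wedge}\subseteq(\mcF\cap\mcG)^{\wedge}$ invokes the $\omega^{\wedge}$-preenvelope half of the Auslander--Buchweitz theorem \cite[Thm.\ 3.7]{MDZtheoryAB} plus summand-closure of $\omega^{\wedge}$. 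Your route effectively re-proves the cited lemma, which is a reasonable trade: it is more self-contained but costs the heavier AB approximation (whose preenvelope form does hold once (1) gives the left Frobenius pair $(\mcF,\mcF\cap\mcG)$) and a verification of the hypotheses of Lemma~\ref{lem:main_lemma}-(3) for $\omega=\mcF\cap\mcG$ that you do not spell out but which go through since $\omega=\mcF\cap\mcF^{\perp_1}$ is closed under extensions. Had you noticed that $\mcG\subseteq\mcF^{\perp}$ is free from the hypothesis $\id_{\mcF}(\mcG)=0$, the second leg of your cycle would have collapsed to a triviality and the AB theorem would only be needed where the paper also implicitly needs it.
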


\begin{proof}
(1) From Definition~\ref{def: CnCP ext}, it is easy to see that the equalities 
$$\mcF=\mcF\cap \Thick(\mcF)={}^{\perp_1}\mcG\cap \Thick(\mcF) \mbox{ and }\mcG\cap \Thick(\mcF)=\mcF^{\perp_1}\cap \Thick(\mcF)$$
hold true. By using this, along with condition $\id_{\mcF}(\mcG)=0,$ one can prove that $\mcF$ is closed under extensions and cocones.
Moreover, since $\mcF$ and $\mcG$ are closed under direct summands, so is $\mcF\cap \mcG$. Finally, 
we have $\mcF\cap\mcG$ is an $\mcF$-injective cogenerator
in $\mcF$ and $\id_{\mcF}((\mcF\cap \mcG)^{\wedge})=0$ from Definition~\ref{def: CnCP ext}-(3) and \cite[Prop. 3.9]{MDZtheoryAB}. The equality $\Thick(\mcF)=\mcF^{\wedge}$ follows by \cite[Prop. 3.8]{ma2021new}.

(2) Since $\mcG\cap \Thick(\mcF)=\mcF^{\perp_1}\cap \Thick(\mcF)$ holds, so does $\mcF\cap \mcG=\mcF\cap\mcF^{\perp_1}$. For the equality $\mcF \cap \mcF^{\perp_1} = \mcF \cap (\mcF \cap \mcG)^\wedge$, 
since $\id_{\mcF}((\mcF\cap \mcG)^{\wedge})=0$ we get the inclusion $\mcF \cap \mcF^{\perp_1} \supseteq \mcF \cap (\mcF \cap \mcG)^\wedge$. On 
the other hand, if $M\in \mcF\cap \mcF^{\perp_{1}}$ we can consider an $\mathbb{E}$-triangle 
$M\to G\to F\dashrightarrow$ with $F\in \mcF$ and $G\in \mcG$ which is a split $\mathbb{E}$-triangle
since $M\in \mcF^{\perp_{1}}$. Hence, $M\in \mcG$ due to $\mcG$ is closed under direct summands.

(3) The first equality is due to \cite[Lem. 3.10]{ma2021new}.
On the other hand, the equality $\mcF^{\perp} \cap \mcF^\wedge= \mcG \cap \mcF^{\wedge}$ follows as
the second one in (2).
\end{proof}

\begin{lemma}\label{lem:correspondence_2}
Let $\mcC$ be an extriangulated category with higher extension groups, $\mcS\subseteq \mcC$ be a thick subcategory of $\mcC$ and let $(\mcF,\mcG)$ be a 1-cotorsion pair cut on
$\Thick(\mcF)$ such that $\id_{\mcF}(\mcG) = 0$. If $\mcF \cap \mcG \cap \mcS$ is both a generator and cogenerator in $\mcF \cap \mcG$, then $(\mcF, \mcF \cap \mcG)$ is a left Frobenius pair cut on $\mcS$. 
\end{lemma}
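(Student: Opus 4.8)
The plan is to verify the four defining conditions of a left Frobenius pair cut on $\mcS$ (Definition~\ref{def: cut left Frobenius pair}) for the pair $(\mcF,\omega)$, where $\omega:=\mcF\cap\mcG$. A large portion of the work is already done by Lemma~\ref{lem:Frobenius_from_cut_cotorsion}: part (1) asserts that $(\mcF,\omega)$ is a left Frobenius pair in $\mcC$, so $\mcF$ is left thick (this is condition (1)), $\omega$ is closed under direct summands, and $\omega$ is an $\mcF$-injective relative cogenerator in $\mcF$. What remains is to upgrade this absolute data to the relative data ``cut on $\mcS$''.

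For condition (4) I would note that $\omega$ is closed under direct summands by the above, and closed under extensions because $\omega=\mcF\cap\mcF^{\perp_1}$ by Lemma~\ref{lem:Frobenius_from_cut_cotorsion}-(2): given a conflation $A\to B\to C\dashrightarrow$ with $A,C\in\omega$, we get $B\in\mcF$ since $\mcF$ is extension-closed, and $B\in\mcF^{\perp_1}$ from the exact sequence of \cite[Cor. 3.12]{Nakaoka1} applied to $\mathbb{E}(F,-)$ with $F\in\mcF$, whence $B\in\omega$. For condition (3), the relative generator part is exactly the hypothesis that $\omega\cap\mcS$ is a generator in $\omega$; and $\omega\cap\mcS$ is $\omega$-projective because $\mathbb{E}^{\geq 1}(\omega\cap\mcS,\omega)\subseteq\mathbb{E}^{\geq 1}(\mcF,\mcG)=0$, the last equality being $\id_{\mcF}(\mcG)=0$.

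The bulk of condition (2), i.e. that $(\mcF\cap\mcS,\omega\cap\mcS)$ is a left Frobenius pair in $\mcC$, splits as follows. Conditions (lF1) and (lF2) are immediate: $\mcF\cap\mcS$ is left thick, being the intersection of the left thick $\mcF$ with the thick $\mcS$, and $\omega\cap\mcS$ is closed under direct summands, being the intersection of summand-closed classes. The $(\mcF\cap\mcS)$-injectivity in (lF3) again follows from $\mathbb{E}^{\geq1}(\mcF\cap\mcS,\omega\cap\mcS)\subseteq\mathbb{E}^{\geq1}(\mcF,\mcG)=0$. The genuinely new point, and the main obstacle, is to show that $\omega\cap\mcS$ is a relative cogenerator in $\mcF\cap\mcS$, i.e. that every $X\in\mcF\cap\mcS$ admits an $\mathbb{E}$-triangle $X\to V\to E\dashrightarrow$ with $V\in\omega\cap\mcS$ and $E\in\mcF\cap\mcS$.

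To produce this triangle I would first use that $\omega$ is a relative cogenerator in $\mcF$ to obtain $X\to B\to C\dashrightarrow$ with $B\in\omega$ and $C\in\mcF$; then apply the ``cogenerator'' hypothesis to $B\in\omega$ to obtain $B\to V\to B'\dashrightarrow$ with $V\in\omega\cap\mcS$ and $B'\in\omega$. Gluing these two $\mathbb{E}$-triangles along $B$ via (ET4) yields an object $E$ together with $\mathbb{E}$-triangles $X\to V\to E\dashrightarrow$ and $C\to E\to B'\dashrightarrow$. From the latter, $E\in\mcF$ since $C,B'\in\mcF$ and $\mcF$ is extension-closed; and since $X,V\in\mcS$ while $\mcS$ is thick (hence closed under cones), the triangle $X\to V\to E\dashrightarrow$ forces $E\in\mcS$. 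Thus $E\in\mcF\cap\mcS$, and $X\to V\to E\dashrightarrow$ is the desired triangle, completing the verification of (lF3) and hence of condition (2). I expect this octahedral gluing step --- balancing the $\mcF$-cogenerator triangle against the $\mcS$-cogenerator triangle so that both the middle term and the cone land in the correct intersection --- to be the crux; the remaining conditions are formal once Lemma~\ref{lem:Frobenius_from_cut_cotorsion} is in hand.
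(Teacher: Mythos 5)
Your proposal is correct and follows essentially the same route as the paper: reduce everything to Lemma~\ref{lem:Frobenius_from_cut_cotorsion} plus the hypotheses, observe that all closure and orthogonality conditions are formal, and isolate the cogenerator property of $\mcF\cap\mcG\cap\mcS$ in $\mcF\cap\mcS$ as the only substantive point, which you settle by the same (ET4) gluing of an $\omega$-cogenerator triangle for $X$ with an $(\omega\cap\mcS)$-cogenerator triangle for its middle term, using thickness of $\mcS$ to place the cone in $\mcS$. The only cosmetic difference is that the paper extracts the first triangle $F\to G\to F'$ directly from the right-completeness of the cut $1$-cotorsion pair rather than from the cogenerator property recorded in Lemma~\ref{lem:Frobenius_from_cut_cotorsion}-(1), which amounts to the same thing.
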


\begin{proof}
From Lemma \ref{lem:Frobenius_from_cut_cotorsion} we get that $(\mcF, \mcF\cap \mcG)$ is a left Frobenius pair in $\mcC$ and so by Remark~\ref{rmk: frob} we have that $\mcF$ is closed under extensions, direct summands and cocones, and $\mcF\cap \mcG$ is
closed under extensions and direct summands. Now, since $\mcF\cap \mcG\cap \mcS$ is a generator in $\mcF\cap \mcG,$ by
assumption and
$\id_{\mcF}(\mcG)=0,$ we have that condition (3) in Definition~\ref{def: cut left Frobenius pair} also holds. Thus, it remains to show that $(\mcF \cap \mcS, \mcF \cap \mcG \cap \mcS)$ is a left Frobenius pair in $\mcC$.
Indeed, $\mcF\cap\mcS$ is closed under extensions, direct summands and cocones since $\mcF$ and 
$\mcS$ satisfy these conditions. In a similar way, since $\mcF, \mcG$ and $\mcS$ are closed under direct summands, so is $\mcF\cap \mcG\cap \mcS$. Now, using that $\id_{\mcF}(\mcG) = 0$, it is only left to show that $\mcF \cap \mcG \cap \mcS$ is a cogenerator in $\mcF \cap \mcS$.
For this, since $(\mcF,\mcG)$ is a
1-cotorsion pair cut on $\Thick(\mcF)$, for any $F \in \mcF \cap \mcS$ there is an $\mathbb{E}$-triangle $F \to G \to F'\dashrightarrow$ with $G \in \mcF \cap \mcG$ and $F' \in \mcF$ as $\mcF$ is closed under extensions. Thus, by using that $\mcF \cap \mcG \cap \mcS$ is a cogenerator in $\mcF \cap \mcG$, we can find an $\mathbb{E}$-triangle
$G \to L \to G'\dashrightarrow$ with $L \in \mcF \cap \mcG \cap \mcS$ and $G' \in \mcF \cap \mcG$. The result follows after applying (ET4) to the $\mathbb{E}$-triangles $F \to G \to F'\dashrightarrow$ and $G \to L \to G'\dashrightarrow$. 
\end{proof}

For a fixed class of objects $\mcS \subseteq \mcC$, we denote by $\mfP_{\mcS}$ the class of pairs $(\mcF,\mcG)$ of classes of objects in $\mcC$ satisfying that $(\mcF, \mcG)$ is a 1-cotorsion pair cut on $\Thick(\mcF)$ with 
$\id_{\mcF}(\mcG) = 0$ and such that
$\mcF \cap \mcG \cap \mcS$ is both a generator and cogenerator in $\mcF \cap \mcG$. We define
a equivalence relation on $\mfP_{\mcS}$ as follows.

\begin{definition}\label{def:equiv rel cut cot}
Let $(\mcF,\mcG), (\mcF',\mcG') \in \mfP_{\mcS}$. We shall say that $(\mcF,\mcG)$ is \textbf{related} to $(\mcF',\mcG')$ in $\mfP_{\mcS}$, denoted $(\mcF,\mcG) \sim (\mcF',\mcG')$, if $\mcF \cap \mcS = \mcF' \cap \mcS$ and $\mcF \cap \mcG \cap \mcS = \mcF' \cap \mcG' \cap \mcS$. 
We denote by $[\mcF,\mcG]_{\mfP_{\mcS}}$ the equivalence class of the representative $(\mcF,\mcG) \in \mfP_{\mcS}$.
\end{definition}

We are now ready to show that there exists a one-to-one correspondence between the quotient classes $\mfP_{\mcS} /\!\!\sim$ and $\mfC_{\mcS} /\!\!\sim$. This result generalizes \cite[Thm. 4.12]{HMPcut}.

\begin{theorem}[Second correspondence theorem]\label{theo:correspondence_2}
Let $\mcC$ be an extriangulated category with higher extension groups and let $\mcS\subseteq \mcC$ be a thick subcategory. Then, the correspondence $\Lambda_{\mcS} : \mfP_{\mcS} /\!\! \sim\,  \to \mfC_{\mcS} /\!\! \sim,\;[\mcF,\mcG]_{\mfP_{\mcS}} \mapsto [\mcF,(\mcF \cap \mcG)^\wedge]_{\mfC_{\mcS}},$ is bijective
with inverse
$\Upsilon_{\mcS}$ given by  
$[\mcA,\mcB]_{\mfC_{\mcS}} \mapsto [\mcA \cap \mcS,\mcB \cap \mcS]_{\mfP_{\mcS}}$.
\end{theorem}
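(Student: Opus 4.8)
The plan is to show that $\Lambda_{\mcS}$ and $\Upsilon_{\mcS}$ are well-defined maps on the quotients and then to verify directly that the two composites $\Upsilon_{\mcS}\circ\Lambda_{\mcS}$ and $\Lambda_{\mcS}\circ\Upsilon_{\mcS}$ are the identities; this yields bijectivity together with the asserted inverse. Two structural identities will do all the real work: $\mcF\cap(\mcF\cap\mcG)^{\wedge}=\mcF\cap\mcG$ from Lemma~\ref{lem:Frobenius_from_cut_cotorsion}-(2), and $(\omega\cap\mcS)^{\wedge}=\mcB\cap\mcS$ (with $\omega:=\mcA\cap\mcB$) from Proposition~\ref{prop: ABcontext->cut1Cot}-(1). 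I will also use throughout that a thick $\mcS$ is in particular closed under cones and cocones, so that Theorem~\ref{theo:correspondence_1} is available.

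First I would check that $\Upsilon_{\mcS}$ is well-defined. Given $(\mcA,\mcB)\in\mfC_{\mcS}$, Proposition~\ref{prop: ABcontext->cut1Cot}-(2) already says $(\mcA\cap\mcS,\mcB\cap\mcS)$ is a $1$-cotorsion pair cut on $\Thick(\mcA\cap\mcS)$ with $\id_{\mcA\cap\mcS}(\mcB\cap\mcS)=0$; since $(\mcA\cap\mcS)\cap(\mcB\cap\mcS)\cap\mcS$ coincides with $(\mcA\cap\mcS)\cap(\mcB\cap\mcS)$, the generator-and-cogenerator requirement in $\mfP_{\mcS}$ is the trivial one of a class inside itself (realized by split $\mathbb{E}$-triangles), so $(\mcA\cap\mcS,\mcB\cap\mcS)\in\mfP_{\mcS}$. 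Independence of the representative is immediate because the two invariants $\mcA\cap\mcS$ and $\mcA\cap\mcB\cap\mcS$ defining $\sim$ on $\mfC_{\mcS}$ are exactly what $\Upsilon_{\mcS}$ remembers. For $\Lambda_{\mcS}$ I would avoid redoing closure computations by factoring it through the first correspondence: Lemma~\ref{lem:correspondence_2} gives $(\mcF,\mcF\cap\mcG)\in\mfF_{\mcS}$, and then $\Lambda_{\mcS}([\mcF,\mcG]_{\mfP_{\mcS}})=[\mcF,(\mcF\cap\mcG)^{\wedge}]_{\mfC_{\mcS}}=\Phi_{\mcS}([\mcF,\mcF\cap\mcG]_{\mfF_{\mcS}})$, so $\Lambda_{\mcS}=\Phi_{\mcS}\circ\Theta_{\mcS}$, where $\Theta_{\mcS}([\mcF,\mcG]_{\mfP_{\mcS}}):=[\mcF,\mcF\cap\mcG]_{\mfF_{\mcS}}$ visibly respects $\sim$; well-definedness of $\Lambda_{\mcS}$, and that it lands in $\mfC_{\mcS}/\!\!\sim$, then follow from Theorem~\ref{theo:correspondence_1}.

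Next I would compute the two composites by pure bookkeeping with intersections. For $\Upsilon_{\mcS}\circ\Lambda_{\mcS}$, starting at $[\mcF,\mcG]_{\mfP_{\mcS}}$ I land at $[\mcF\cap\mcS,(\mcF\cap\mcG)^{\wedge}\cap\mcS]_{\mfP_{\mcS}}$; the first invariant is unchanged, and for the second I use $(\mcF\cap\mcS)\cap((\mcF\cap\mcG)^{\wedge}\cap\mcS)\cap\mcS=\mcF\cap(\mcF\cap\mcG)^{\wedge}\cap\mcS=\mcF\cap\mcG\cap\mcS$ by Lemma~\ref{lem:Frobenius_from_cut_cotorsion}-(2), so the class equals $[\mcF,\mcG]_{\mfP_{\mcS}}$. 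For $\Lambda_{\mcS}\circ\Upsilon_{\mcS}$, starting at $[\mcA,\mcB]_{\mfC_{\mcS}}$ and writing $\omega=\mcA\cap\mcB$, I land at $[\mcA\cap\mcS,(\omega\cap\mcS)^{\wedge}]_{\mfC_{\mcS}}$; again the first invariant is unchanged, and for the second, Proposition~\ref{prop: ABcontext->cut1Cot}-(1) gives $(\mcA\cap\mcS)\cap(\omega\cap\mcS)^{\wedge}\cap\mcS=(\mcA\cap\mcS)\cap(\mcB\cap\mcS)=\mcA\cap\mcB\cap\mcS$, so the class equals $[\mcA,\mcB]_{\mfC_{\mcS}}$.

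The step I expect to be the main obstacle is the well-definedness of $\Lambda_{\mcS}$, namely confirming that $(\mcF,(\mcF\cap\mcG)^{\wedge})$ really is a left weak AB context cut on $\mcS$ — in particular that $(\mcF\cap\mcG)^{\wedge}\cap\mcS$ is right thick and sits inside $(\mcF\cap\mcS)^{\wedge}$. Rather than re-prove these closure statements (the delicate closed-under-extensions, direct-summands and cones arguments already settled inside the proof of Theorem~\ref{theo:correspondence_1}), the factorization $\Lambda_{\mcS}=\Phi_{\mcS}\circ\Theta_{\mcS}$ lets me import them wholesale. Everything else reduces to routine manipulation of intersections with the thick class $\mcS$, which the two highlighted identities collapse immediately.
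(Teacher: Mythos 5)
Your proposal is correct and follows essentially the same route as the paper: well-definedness of $\Lambda_{\mcS}$ via Lemma~\ref{lem:correspondence_2} composed with Theorem~\ref{theo:correspondence_1}, well-definedness of $\Upsilon_{\mcS}$ via Proposition~\ref{prop: ABcontext->cut1Cot} together with the trivial generator/cogenerator observation, and the mutual-inverse check via the identities $\mcF\cap(\mcF\cap\mcG)^{\wedge}=\mcF\cap\mcG$ and $(\omega\cap\mcS)^{\wedge}=\mcB\cap\mcS$. You merely make explicit the composite computations that the paper leaves as "follows from Lemma~\ref{lem:Frobenius_from_cut_cotorsion} and Proposition~\ref{prop: ABcontext->cut1Cot}."
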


\begin{proof}
First, we see that the mappings $\Lambda_{\mcS}$ and $\Upsilon_{\mcS}$ are well-defined.

On the one hand, if $(\mcF,\mcG)\sim (\mcF',\mcG')$ in $\mfP_{\mcS}$, by Lemma \ref{lem:correspondence_2} and Theorem \ref{theo:correspondence_1}, we have that $(\mcF,(\mcF \cap \mcG)^\wedge)$ and 
$(\mcF',(\mcF' \cap \mcG')^\wedge)$ are left weak AB-contexts cut on $\mcS$. Moreover, 
from Lemma \ref{lem:Frobenius_from_cut_cotorsion}-(2) we get the equalities $\mcF \cap \mcG \cap \mcS = \mcF \cap (\mcF \cap \mcG)^\wedge \cap \mcS$ and $\mcF' \cap \mcG' \cap \mcS = \mcF' \cap (\mcF' \cap \mcG')^\wedge \cap \mcS$ and so $(\mcF,(\mcF \cap \mcG)^\wedge) \sim (\mcF', (\mcF' \cap \mcG')^\wedge)$ in $\mathfrak{C}_{\mcS}$. 
Hence, $\Lambda_{\mcS}$ does not depend on representatives.

On the other hand, let $(\mcA,\mcB) \in \mfC_{\mcS}$. By Proposition~\ref{prop: ABcontext->cut1Cot} we have that $(\mcA \cap \mcS, \mcB \cap \mcS)$ is a 1-cotorsion pair cut on $\Thick(\mcA\cap\mcS)$ satisfying $\id_{\mcA \cap \mcS}(\mcB \cap \mcS) = 0$. Furthermore, it is clear that
$\mcA\cap \mcB\cap\mcS$ is a generator and cogenerator in itself, which implies that
$(\mcA\cap\mcS, \mcB\cap\mcS)\in 
\mathfrak{P}_{\mcS}$. Finally, it is also clear that $\Upsilon_{\mcS}$ does not depend on representatives.

The fact that $\Upsilon_{\mcS}$ is the inverse of $\Lambda_{\mcS}$
follows from Lemma \ref{lem:Frobenius_from_cut_cotorsion} and Proposition~\ref{prop: ABcontext->cut1Cot}.
\end{proof}

\begin{example} Consider Example~\ref{ex: GP, P2}. We know that $\mathcal{GP}(\xi)$ and $\mathcal{P}(\xi)$ are
closed under extensions and cocones by \cite[Cor. 3.23]{Hegorensteinobjects}. By taking $\mathcal{S}:=\mathcal{P}(\xi)^{\wedge},$ we assert that $(\mathcal{GP}(\xi), \mathcal{P}(\xi)^{\wedge})$ and $(\mathcal{P}(\xi), \mathcal{GP}(\xi)^{\perp})$ belong to $\mathfrak{P}_{\mathcal{S}}$. Indeed, it follows due to the fact that the equalities $\mathcal{GP}(\xi)\cap \mathcal{P}(\xi)^{\wedge}=\mathcal{P}(\xi)=\mathcal{P}(\xi)\cap \mathcal{GP}(\xi)^{\perp}$ hold true \cite[Prop. 5.4]{HZZgorensteinness} and $\mathcal{P}(\xi)$
is a generator and cogenerator in itself. Moreover, from the previous equalities, we
get that such pairs are related
in $\mathfrak{P}_{\mathcal{S}}$. 
\[
\xymatrix{
& [\mathcal{GP}(\xi), \mathcal{P}(\xi)^{\wedge}]_{\mathfrak{C}_{\mathcal{S}}}\ar[dr]^{\Upsilon_{\mathcal{S}}} & \\
[\mathcal{GP}(\xi), \mathcal{P}(\xi)^{\wedge}]_{\mathfrak{P}_{\mathcal{S}}}\ar[ur]^{\Lambda_{\mathcal{S}}}
\ar@{=}[rr] &
& [\mathcal{P}(\xi), \mathcal{P}(\xi)^{\wedge}]_{\mathfrak{P}_{\mathcal{S}}}
}
\]
\[
\xymatrix{
& [\mathcal{P}(\xi), \mathcal{P}(\xi)^{\wedge}]_{\mathfrak{C}_{\mathcal{S}}}\ar[dr]^{\Upsilon_{\mathcal{S}}} & \\
[\mathcal{P}(\xi), \mathcal{GP}(\xi)^{\perp}]_{\mathfrak{P}_{\mathcal{S}}}\ar[ur]^{\Lambda_{\mathcal{S}}}
\ar@{=}[rr] &
& [\mathcal{P}(\xi), \mathcal{P}(\xi)^{\wedge}]_{\mathfrak{P}_{\mathcal{S}}}
}
\]
\end{example}

As we can see in the previous example, one equivalence class could have several representatives. In
the following result we prove that the previous correspondence is also valid if we restrict to certain 
representatives of cut cotorsion pairs. For a fixed class $\mcS\subseteq \mcC$, we denote by $\widetilde{\mathfrak{P}}_{\mcS}$ the subclass of $\mathfrak{P}_{\mcS}$
whose elements are $(\mcF,\mcG)\in \mathfrak{P}_{\mcS}$ satisfying $\mcG\subseteq \Thick(\mcF)$.

\begin{corollary}\label{cor:correspondence_3}
Let $\mcC$ be an extriangulated category with higher extension groups and let $\mcS\subseteq \mcC$ be a thick subcategory. Then, the correspondence 
$$\widetilde{\Lambda}_{\mcS} \colon \widetilde{\mfP}_{\mcS} /\!\! \sim \mbox{} \to \mfC_{\mcS} /\!\! \sim,\;[\mcF,\mcG]_{\mathfrak{P}_{\mcS}} \mapsto [\mcF, \mcG]_{\mfC_{\mcS}},$$
 is bijective, with inverse
$\widetilde{\Upsilon}_{\mcS}$  given by 
$[\mcA,\mcB]_{\mfC_{\mcS}} \mapsto [\mcA \cap \mcS,\mcB \cap \mcS]_{\mfP_{\mcS}}$.
\end{corollary}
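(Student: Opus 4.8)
The plan is to recognize $\widetilde{\Lambda}_{\mcS}$ as simply the restriction of the bijection $\Lambda_{\mcS}$ from Theorem~\ref{theo:correspondence_2} to the subclass $\widetilde{\mfP}_{\mcS}$, and then to check that its inverse $\Upsilon_{\mcS}$ already takes its values in $\widetilde{\mfP}_{\mcS}/\!\!\sim$. Once these two facts are in place, the bijectivity of $\widetilde{\Lambda}_{\mcS}$, together with the formula for $\widetilde{\Upsilon}_{\mcS}$, is inherited verbatim from the corresponding statements for $\Lambda_{\mcS}$ and $\Upsilon_{\mcS}$.

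First I would establish that $\widetilde{\Lambda}_{\mcS}$ agrees with $\Lambda_{\mcS}$ on the restricted domain. For any $(\mcF,\mcG)\in\mfP_{\mcS}$, Lemma~\ref{lem:Frobenius_from_cut_cotorsion} provides both $\Thick(\mcF)=\mcF^{\wedge}$ (part (1)) and $(\mcF\cap\mcG)^{\wedge}=\mcG\cap\mcF^{\wedge}=\mcG\cap\Thick(\mcF)$ (part (3)). Hence, as soon as $(\mcF,\mcG)\in\widetilde{\mfP}_{\mcS}$, that is $\mcG\subseteq\Thick(\mcF)$, one obtains $(\mcF\cap\mcG)^{\wedge}=\mcG$. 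Consequently $\widetilde{\Lambda}_{\mcS}([\mcF,\mcG]_{\mfP_{\mcS}})=[\mcF,\mcG]_{\mfC_{\mcS}}=[\mcF,(\mcF\cap\mcG)^{\wedge}]_{\mfC_{\mcS}}=\Lambda_{\mcS}([\mcF,\mcG]_{\mfP_{\mcS}})$, so $\widetilde{\Lambda}_{\mcS}$ coincides with $\Lambda_{\mcS}|_{\widetilde{\mfP}_{\mcS}}$. In particular it is well-defined and injective, being a restriction of the injective map $\Lambda_{\mcS}$.

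The main step, and the only genuinely non-formal one, is to verify that $\Upsilon_{\mcS}$ corestricts to $\widetilde{\mfP}_{\mcS}/\!\!\sim$; this is exactly where the hypothesis $\mcG\subseteq\Thick(\mcF)$ is needed. Given $(\mcA,\mcB)\in\mfC_{\mcS}$ with $\omega:=\mcA\cap\mcB$, Proposition~\ref{prop: ABcontext->cut1Cot} yields the equalities $\mcB\cap\mcS=(\omega\cap\mcS)^{\wedge}$ and $\Thick(\mcA\cap\mcS)=(\mcA\cap\mcS)^{\wedge}$. Since $\omega\cap\mcS\subseteq\mcA\cap\mcS$ and the resolution closure $(-)^{\wedge}$ is monotone, I would conclude $\mcB\cap\mcS=(\omega\cap\mcS)^{\wedge}\subseteq(\mcA\cap\mcS)^{\wedge}=\Thick(\mcA\cap\mcS)$, so that $(\mcA\cap\mcS,\mcB\cap\mcS)\in\widetilde{\mfP}_{\mcS}$ and $\widetilde{\Upsilon}_{\mcS}([\mcA,\mcB]_{\mfC_{\mcS}})=\Upsilon_{\mcS}([\mcA,\mcB]_{\mfC_{\mcS}})$ is a legitimate class of $\widetilde{\mfP}_{\mcS}/\!\!\sim$.

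Finally I would assemble the conclusion. By Theorem~\ref{theo:correspondence_2} the maps $\Lambda_{\mcS}$ and $\Upsilon_{\mcS}$ are mutually inverse; by the previous paragraph $\Upsilon_{\mcS}$ factors through $\widetilde{\mfP}_{\mcS}/\!\!\sim$, defining $\widetilde{\Upsilon}_{\mcS}$, and by the step before $\widetilde{\Lambda}_{\mcS}=\Lambda_{\mcS}|_{\widetilde{\mfP}_{\mcS}}$. The identities $\widetilde{\Lambda}_{\mcS}\circ\widetilde{\Upsilon}_{\mcS}=\mathrm{id}$ and $\widetilde{\Upsilon}_{\mcS}\circ\widetilde{\Lambda}_{\mcS}=\mathrm{id}$ then follow directly from the corresponding identities for $\Lambda_{\mcS}$ and $\Upsilon_{\mcS}$, since every class involved is represented by an element of $\widetilde{\mfP}_{\mcS}$. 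This shows $\widetilde{\Lambda}_{\mcS}$ is bijective with inverse $\widetilde{\Upsilon}_{\mcS}$, as claimed.
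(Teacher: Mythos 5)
Your proposal is correct and follows essentially the same route as the paper: both reduce the corollary to Theorem~\ref{theo:correspondence_2} by using Lemma~\ref{lem:Frobenius_from_cut_cotorsion}-(1),(3) to see that $(\mcF\cap\mcG)^{\wedge}=\mcG\cap\Thick(\mcF)=\mcG$ on $\widetilde{\mfP}_{\mcS}$, and Proposition~\ref{prop: ABcontext->cut1Cot} to see that $\Upsilon_{\mcS}$ lands in $\widetilde{\mfP}_{\mcS}/\!\!\sim$. The only cosmetic difference is that you derive $\mcB\cap\mcS\subseteq\Thick(\mcA\cap\mcS)$ by monotonicity of $(-)^{\wedge}$, whereas the paper reads it off directly from Definition~\ref{def: cut left AB context}-(3).
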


\begin{proof}
Let $(\mcF,\mcG)\sim (\mcF',\mcG')$ in $\widetilde{\mfP}_{\mcS}$, that is, $\mcF\cap \mcS=\mcF'\cap \mcS$ and $\mcF\cap \mcG\cap \mcS=\mcF'\cap \mcG'\cap \mcS$. By Lemma \ref{lem:correspondence_2} and Theorem \ref{theo:correspondence_1}, we have that $(\mcF,(\mcF \cap \mcG)^\wedge)$ and 
$(\mcF',(\mcF' \cap \mcG')^\wedge)$ are left weak AB-contexts cut on $\mcS$. Moreover, since both pairs are in $\widetilde{\mathfrak{P}}_{\mcS}$ we get that $(\mcF\cap \mcG)^{\wedge}=\mcG$ and $(\mcF'\cap \mcG')^{\wedge}=\mcG'$ from Lemma~\ref{lem:Frobenius_from_cut_cotorsion}-(3). Thus, $(\mcF, \mcG)\sim (\mcF', \mcG')$ in $\mathfrak{C}_{\mcS}$. Therefore, $\widetilde{\Lambda}_{\mcS}$ is well-defined.

Let $(\mcA,\mcB) \in \mfC_{\mcS}$. From Theorem~\ref{theo:correspondence_2}, Definition~\ref{def: cut left AB context}-(3) and Proposition~\ref{prop: ABcontext->cut1Cot}-(1), we have that 
$(\mcA\cap\mcS, \mcB\cap\mcS)\in 
\mathfrak{P}_{\mcS}$, $\mcB\cap \mcS\subseteq (\mcA\cap \mcS)^{\wedge}$ and $(\mcA\cap \mcS)^{\wedge}=\Thick(\mcA\cap \mcS)$, respectively. Hence, 
$(\mcA\cap\mcS, \mcB\cap \mcS)\in \widetilde{\mathfrak{P}}_{\mcS}$. Furthermore, it is clear that 
$(\mcA\cap\mcS, \mcB\cap \mcS)\sim (\mcA'\cap\mcS, \mcB'\cap \mcS)$ in $\widetilde{\mfP}_{\mcS}$ if
$(\mcA, \mcB)\sim (\mcA', \mcB')$ in $\mathfrak{C}_{\mcS}$. Therefore, $\widetilde{\Upsilon}_{\mcS}$ is well-defined.

Finally, we show that they are mutually inverse. In fact,
by definition of $\widetilde{\Lambda}_{\mcS}$ and $\widetilde{\Upsilon}_{\mcS}$, we have that 
$[\mcF, \mcG]_{\mfP_{\mcS}}\mapsto [\mcF, \mcG]_{\mathfrak{C}_{\mcS}} \mapsto [\mcF\cap\mcS, \mcG\cap \mcS]_{\mfP_{\mcS}}$, for any 
$(\mcF, \mcG)\in \widetilde{\mathfrak{P}}_{\mcS}$ and $[\mcA, \mcB]_{\mathfrak{C}_{\mcS}}\mapsto [\mcA\cap \mcS, \mcB\cap \mcS]_{\mfP_{\mcS}}\mapsto 
[\mcA\cap \mcS, \mcB\cap \mcS]_{\mathfrak{C}_{\mcS}}$, for any $(\mcA, \mcB)\in \mathfrak{C}_{\mcS}$. 
In both compositions it is clear that the first and third equivalence class coincide.
\end{proof}

\begin{remark}\label{rmk: S=C} By taking $\mathcal{S}:=\mcC$ in Definitions~\ref{def:Frobenius_and_AB_relations} and~\ref{def:equiv rel cut cot}, the following hold true. 
\begin{enumerate}
\item For any $(\mcX,\omega), (\mcX',\omega')\in 
\mathfrak{F}_{\mathcal{S}}$,
$(\mcX,\omega)\sim (\mcX',\omega')$ if, and only if, 
$\mcX=\mcX'$ and $\omega=\omega'$.

\item For any $(\mcA,\mcB), (\mcA',\mcB')\in 
\mathfrak{C}_{\mathcal{S}}$,
$(\mcA,\mcB)\sim (\mcA',\mcB')$ if, and only if, 
$\mcA=\mcA'$ and $\mcA\cap\mcB=\mcA'\cap\mcB'$ if, and only if, $\mcA=\mcA'$ and $\mcB=\mcB'$ by
Proposition~\ref{prop: ABcontext->cut1Cot}.

\item For any
 $(\mcF,\mcG), (\mcF',\mcG')\in \widetilde{\mathfrak{P}}_{\mathcal{S}}$,
$(\mcF,\mcG)\sim (\mcF',\mcG')$ if, and only if, $\mcF=\mcF'$ and $\mcF\cap \mcG=\mcF\cap\mcG'$ if,
and only if, $\mcG=\mcG'$ by Lemma~\ref{lem:Frobenius_from_cut_cotorsion}-(3). So,
every equivalence class in $\widetilde{\mathfrak{P}}_{\mathcal{S}}/\sim$ 
only has one element.
\end{enumerate}
\end{remark}

\begin{corollary}\cite[Thm. 3.12]{ma2021new}\label{cor: leftFrob<->cotor pairs in Thick}
Let $\mcC$ be an extriangulated category with higher extension groups. Then, the maps
$(\mcX,  \omega)\mapsto (\mcX, \omega^{\wedge}) \mbox{ and } (\mcF, \mcG)\mapsto (\mcF, \mcF\cap \mcG)$
give mutually inverse one-to-one correspondences between the following classes:
\begin{enumerate}
\item Left Frobenius pairs $(\mcX, \omega)$ in $\mcC$.
\item Cotorsion pairs $(\mcF, \mcG)$ in the extriangulated category $\Thick(\mcF)$ with $\id_{\mcF}(\mcG)=0$.
\end{enumerate}
\end{corollary}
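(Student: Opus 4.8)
The plan is to derive this statement as the special case $\mcS := \mcC$ of the two correspondence theorems already established, after identifying the relevant quotient classes with the objects appearing in (1) and (2). First I would invoke Remark~\ref{rmk: S=C}: when $\mcS = \mcC$ each of the equivalence relations on $\mfF_{\mcC}$, $\mfC_{\mcC}$ and $\widetilde{\mfP}_{\mcC}$ collapses, so that every equivalence class is a singleton and the passage to quotients $/\!\!\sim$ may be suppressed throughout. Consequently Theorem~\ref{theo:correspondence_1} furnishes a bijection $\Phi_{\mcC}\colon \mfF_{\mcC} \to \mfC_{\mcC}$, $(\mcX,\omega)\mapsto(\mcX,\omega^{\wedge})$, with inverse $\Psi_{\mcC}\colon(\mcA,\mcB)\mapsto(\mcA,\mcA\cap\mcB)$, while Corollary~\ref{cor:correspondence_3} furnishes a bijection $\widetilde{\Lambda}_{\mcC}\colon \widetilde{\mfP}_{\mcC}\to\mfC_{\mcC}$, $(\mcF,\mcG)\mapsto(\mcF,\mcG)$, with inverse $\widetilde{\Upsilon}_{\mcC}\colon(\mcA,\mcB)\mapsto(\mcA\cap\mcC,\mcB\cap\mcC)=(\mcA,\mcB)$.

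Next I would identify the two sides of the asserted correspondence with $\mfF_{\mcC}$ and $\widetilde{\mfP}_{\mcC}$. For the left-hand side, I would check that at $\mcS=\mcC$ the conditions of Definition~\ref{def: cut left Frobenius pair} reduce exactly to $(\mcX,\omega)$ being a left Frobenius pair in $\mcC$: condition (2) becomes this requirement verbatim, condition (3) holds trivially (since $\omega\subseteq\mcX$ together with the $\mcX$-injectivity of $\omega$ forces $\mathbb{E}^{\geq 1}(\omega,\omega)=0$, and split $\mathbb{E}$-triangles make $\omega$ a relative generator in itself), and condition (4) is automatic from Remark~\ref{rmk: frob1}. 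Hence $\mfF_{\mcC}$ is precisely the class in (1). For the right-hand side, I would apply Proposition~\ref{pro: 1cot<->complete cot} with the thick subcategory $\Thick(\mcF)$ (which is closed under extensions and direct summands): a pair $(\mcF,\mcG)$ with $\mcF,\mcG\subseteq\Thick(\mcF)$ is a $1$-cotorsion pair cut on $\Thick(\mcF)$ if and only if it is a cotorsion pair in the induced extriangulated category $\Thick(\mcF)$. Since $\mcF\subseteq\Thick(\mcF)$ always holds, the extra condition $\mcG\subseteq\Thick(\mcF)$ defining $\widetilde{\mfP}_{\mcC}$ is exactly what is needed, and the constraint $\id_{\mcF}(\mcG)=0$ carries over on both sides; so $\widetilde{\mfP}_{\mcC}$ coincides with the class in (2).

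Finally I would compose the two correspondences. The map from (1) to (2) is $\widetilde{\Upsilon}_{\mcC}\circ\Phi_{\mcC}$; since $\widetilde{\Upsilon}_{\mcC}$ merely intersects with $\mcC$ and hence acts as the identity on pairs of subcategories of $\mcC$, this composite sends $(\mcX,\omega)$ to $(\mcX,\omega^{\wedge})$, exactly as claimed, landing in $\widetilde{\mfP}_{\mcC}$ by construction of $\widetilde{\Upsilon}_{\mcC}$. Dually the map from (2) to (1) is $\Psi_{\mcC}\circ\widetilde{\Lambda}_{\mcC}$, which sends $(\mcF,\mcG)$ to $(\mcF,\mcF\cap\mcG)$. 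Being composites of bijections whose factors are pairwise mutually inverse, these two maps are automatically mutually inverse, so no separate roundtrip computation is needed; the pair-level identities $\mcX\cap\omega^{\wedge}=\omega$ (Lemma~\ref{lem:relative_equality} at $\mcS=\mcC$) and $(\mcF\cap\mcG)^{\wedge}=\mcG$ (Lemma~\ref{lem:Frobenius_from_cut_cotorsion}-(3) together with $\mcG\subseteq\Thick(\mcF)=\mcF^{\wedge}$) are then recovered as byproducts. I expect the main obstacle to lie not in any computation but in the conceptual identification of the previous paragraph: recognizing, via Proposition~\ref{pro: 1cot<->complete cot}, that the relative notion of a ``$1$-cotorsion pair cut on $\Thick(\mcF)$ with $\mcG\subseteq\Thick(\mcF)$'' is genuinely the same as an absolute cotorsion pair inside the induced extriangulated category $\Thick(\mcF)$.
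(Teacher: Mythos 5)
Your proposal is correct and follows essentially the same route as the paper: specialize to $\mcS=\mcC$ via Remark~\ref{rmk: S=C}, compose $\widetilde{\Upsilon}_{\mcC}\circ\Phi_{\mcC}$ and $\Psi_{\mcC}\circ\widetilde{\Lambda}_{\mcC}$ from Theorem~\ref{theo:correspondence_1} and Corollary~\ref{cor:correspondence_3}, and identify the class (2) with $\widetilde{\mfP}_{\mcC}$ through Proposition~\ref{pro: 1cot<->complete cot}. Your explicit check that $\mfF_{\mcC}$ coincides with the class of left Frobenius pairs is only spelled out more fully than in the paper's proof, which leaves that identification implicit.
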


\begin{proof}
By taking $\mcS:=\mcC$, by Remark~\ref{rmk: S=C} every equivalence class in $\mathfrak{F}_{\mcS}/\!\!\sim$, $\mathfrak{C}_{\mcS}/\!\!\sim$ and
$\widetilde{\mathfrak{P}}_{\mcS}/\!\!\sim$ has one element. Thus, from Theorem~\ref{theo:correspondence_1} and Corollary~\ref{cor:correspondence_3}, we get that
\begin{align*}
\widetilde{\Upsilon}_{\mcS}\Phi_{\mcS}: \mathfrak{F}_{\mcS}/\!\!\sim\, \to \widetilde{\mfP}_{\mcS}/\!\!\sim 
\quad \mbox{ is given by }\quad (\mcX, \omega)\mapsto (\mcX, \omega^{\wedge}), \\
\Psi_{\mcS}\widetilde{\Lambda}_{\mcS}: \widetilde{\mfP}_{\mcS}/\!\!\sim \to \mathfrak{F}_{\mcS}/\!\!\sim
\quad \mbox{ is given by }\quad (\mcF, \mcG)\mapsto (\mcF, \mcF\cap\mcG).
\end{align*}

Notice that if $(\mcX, \omega)$ is a left Frobenius pair then $(\mcX, \omega^{\wedge})\in \mathfrak{C}_{\mcS}$ and so by definition and Proposition~\ref{prop: ABcontext->cut1Cot}, we obtain
$\omega^{\wedge}\subseteq \mcX^{\wedge}$ and $\id_{\mcX}(\omega^{\wedge})=0$. Since $\Thick(\mcX)$ is
closed under extensions and direct summands in 
$\mcC,$ we get that 
$\Thick(\mcX)$ is an extriangulated category \cite[Rmk. 2.18]{Nakaoka1}. Now, using that $(\mcX, \omega^{\wedge})$ is a $1$-cotorsion pair cut on $\Thick(\mcX)$ with $\mcX, \omega^{\wedge}\subseteq \Thick(\mcX),$ it follows from Proposition~\ref{pro: 1cot<->complete cot} that 
$(\mcX, \omega^{\wedge})$ is a cotorsion pair in the extriangulated category $\Thick(\mcX)$ with $\id_{\mcX}(\omega^{\wedge})=0$. 

Let $(\mcF, \mcG)$ be a cotorsion pair in the extriangulated category $\Thick(\mcF)$ with
$\id_{\mcF}(\mcG)=0$. By Proposition~\ref{pro: 1cot<->complete cot}, we have that
$(\mcF, \mcG)$ is a $1$-cotorsion pair cut on $\Thick(\mcF)$ with $\mcG\subseteq \Thick(\mcF)$ and $\id_{\mcF}(\mcG)=0$. 
Thus, by Corollary~\ref{cor:correspondence_3} we get that $(\mcF, \mcG)\in \mathfrak{C}_{\mcS}$ and so 
$(\mcF, \mcF\cap \mcG)$ is a left Frobenius pair by Theorem~\ref{theo:correspondence_1}.
\end{proof}

\section{\textbf{Connecting several contexts}}\label{Sec: applications}

In the previous sections we established notions and results in the general context that an extriangulated category provides. However all these results can be applied for abelian, exact and triangulated categories. This last section is devoted to show that different contexts can interplay by
using the cut notions mentioned previously.\\ 

For a triangulated category $\mcT$ with shift functor [1], it is possible to get abelian categories
when there exists certain kind of structures. Examples of such structures are bounded $t$-structures for which there exists an associated abelian category  $\mathcal{H}$
called \emph{ the heart} \cite[Lem. 3.2]{bridgeland2007stability}. Among the highlighted properties of
the heart, we have that it is closed under extensions \cite[Lem. 1.3]{lorenzinuniqueness2020} and 
$\mathcal{T}(A, B[n])=0$ for any $A, B\in \mathcal{H}$ and any $n<0$ \cite[Def. 1.2]{lorenzinuniqueness2020}. The following generalizes this notion.

\begin{definition}\cite[Dyer's Thm. A.2]{lorenzin2022compatibility}\label{def: first ext group}
Let $\mathcal{T}$ be a triangulated category and $\mathcal{H}\subseteq \mathcal{T}$ be closed under extensions, having a zero object of $\mathcal{T}$ and satisfying that $\mathcal{T}(A, B[-1])=0,$ for any 
$A, B\in \mathcal{H}$. Then, $\mathcal{H}$ has a natural exact structure, given by defining $A\rightarrowtail B\twoheadrightarrow C$ as a conflation if 
$A\to B\to C\to A[1]$ is a distinguished triangle in $\mathcal{T},$ for some $C\to A[1]$. This association gives 
rise to a natural isomorphism $\Ext^{1}_{\mathcal{H}}(A, B)
\cong \mathcal{T}(A, B[1]),$ for all $A, B\in \mathcal{H}$.
\end{definition}

Definition~\ref{def: first ext group} can be extended
to higher extensions groups. Morever, there is a relation between short exact sequences in $\mathcal{H}$ and distinguished triangles in $\mcT$ as we can see in the following two results.

\begin{proposition}\cite[Thm. A. 7 \& Def. A.12]{lorenzin2022compatibility}\label{pro: A7}
Let $\mathcal{H}$ be an exact subcategory of $\mathcal{T}$ as in Dyer's Theorem A.2. Then there is 
a well-defined map $$f_{n,A,B}:\Ext^{n}_{\mathcal{H}}(A,B)\to \mathcal{T}(A, B[n]),$$ for any $A, B\in \mathcal{H}$ and $n\geq 0$. We say that \emph{$\mathcal{T}$ has all the Ext groups
of $\mathcal{H}$} if the morphism $f_{n,A,B}$ is an
isomorphism for any $A, B\in \mathcal{H}$ and  
$n\in \mathbb{N}$.
\end{proposition}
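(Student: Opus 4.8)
The plan is to build $f_{n,A,B}$ from the case $n=1$ (Dyer's Theorem~A.2, recorded in Definition~\ref{def: first ext group}) together with the multiplicative structures present on both sides, and then to reduce well-definedness to that of $f_{1}$. Recall that, since $\mathcal{H}$ is an exact category, the group $\Ext^{n}_{\mathcal{H}}(A,B)$ admits Yoneda's description as the set of equivalence classes of $n$-fold extensions
$$\xi\colon\quad 0\to B\to X_{n}\to X_{n-1}\to\cdots\to X_{1}\to A\to 0$$
in $\mathcal{H}$, where the equivalence relation is generated by morphisms of $n$-extensions fixing the end terms $A$ and $B$; moreover, Yoneda splicing gives an associative product $\Ext^{m}_{\mathcal{H}}(C,B)\times\Ext^{n}_{\mathcal{H}}(A,C)\to\Ext^{m+n}_{\mathcal{H}}(A,B)$. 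On the triangulated side, the shift furnishes a matching composition $\mathcal{T}(C,B[m])\times\mathcal{T}(A,C[n])\to\mathcal{T}(A,B[m+n])$, $(g,h)\mapsto g[n]\circ h$, which is bilinear and associative. For $n=0$ I set $f_{0}$ to be the identity $\Ext^{0}_{\mathcal{H}}(A,B)=\mathcal{T}(A,B)$ (the Hom-sets agree as $\mathcal{H}$ is full), and for $n=1$ I take $f_{1}$ to be the natural isomorphism of Dyer's Theorem~A.2.

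For the construction, given a representative $\xi$ as above I would split it into $n$ conflations by taking the images $K_{i}:=\Ima(X_{i+1}\to X_{i})$ (with $K_{0}:=A$ and $K_{n}:=B$), obtaining conflations $\sigma_{i}\colon K_{i}\rightarrowtail X_{i}\twoheadrightarrow K_{i-1}$ for $1\le i\le n$. By Definition~\ref{def: first ext group} each $\sigma_{i}$ is realized by a distinguished triangle $K_{i}\to X_{i}\to K_{i-1}\xrightarrow{\delta_{i}}K_{i}[1]$, so that $f_{1}(\sigma_{i})=\delta_{i}\in\mathcal{T}(K_{i-1},K_{i}[1])$, and I define
$$f_{n}([\xi]):=\delta_{n}[n-1]\circ\delta_{n-1}[n-2]\circ\cdots\circ\delta_{2}[1]\circ\delta_{1}\ \in\ \mathcal{T}(A,B[n]).$$
Equivalently, $f_{n}([\xi])$ is the iterated composition $f_{1}(\sigma_{n})\bullet\cdots\bullet f_{1}(\sigma_{1})$ under the pairing above. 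The multiplicativity $f_{m+n}(\eta\bullet\xi)=f_{m}(\eta)\cdot f_{n}(\xi)$ then holds by construction, since splicing of extensions corresponds exactly to concatenating the two chains of connecting morphisms.

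The real work is well-definedness, which I would organize in three steps. First, independence of the chosen splitting: any two choices of images are canonically isomorphic, and the induced isomorphisms of the $K_{i}$ intertwine the triangles realizing the $\sigma_{i}$, so they cancel telescopically in the composite. Second, independence of the chosen distinguished triangle realizing a given $\sigma_{i}$: this is precisely what Dyer's Theorem~A.2 guarantees, the vanishing hypothesis $\mathcal{T}(A,B[-1])=0$ being exactly the input that rigidifies the connecting morphism and makes $f_{1}$ (hence each $f_{1}(\sigma_{i})$) canonical and natural. Third, invariance under Yoneda equivalence: a morphism of $n$-extensions fixing $A$ and $B$ restricts to compatible morphisms of the short pieces $\sigma_{i}$, and naturality of $f_{1}$ together with functoriality of the shift forces the two composites to agree; compatibility with the Baer sum is checked on the $\Ext^{1}$-level and propagated by bilinearity of the pairing.

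I expect the main obstacle to be the third step, namely matching the Yoneda equivalence relation with triangulated composition, because the third morphism of a distinguished triangle is not unique in an arbitrary triangulated category. The cleanest route around this is to make multiplicativity the pivot: having proved $f_{m+n}(\eta\bullet\xi)=f_{m}(\eta)\cdot f_{n}(\xi)$ directly (via the octahedral axiom applied to a spliced pair of conflations), one observes that every $n$-extension is a splice of $n$ conflations and that the Yoneda relations in degree $n$ are generated by the relations already present in degree $1$; well-definedness of $f_{n}$ then descends from that of $f_{1}$, which is Dyer's theorem. The terminology \emph{$\mathcal{T}$ has all the Ext groups of $\mathcal{H}$} is, finally, just the requirement that each $f_{n,A,B}$ be bijective, so no further argument is needed there.
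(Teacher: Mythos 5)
This proposition is imported verbatim from the cited source (Thm.~A.7 and Def.~A.12 of \cite{lorenzin2022compatibility}); the paper supplies no proof of its own, so the only meaningful comparison is with that reference, whose construction is exactly the one you give: splice the $n$-extension into conflations, apply the degree-one map of Dyer's Theorem~A.2 to each piece, and compose the shifted connecting morphisms. Your reconstruction is correct in outline, and in particular you rightly identify the hypothesis $\mathcal{T}(A,B[-1])=0$ as what makes the completing morphism $C\to A[1]$ of a fixed conflation unique (if $(f,g,h)$ and $(f,g,h')$ are both distinguished, the comparison isomorphism $c$ satisfies $c-1_{C}=v\circ h$ with $v\in\mathcal{T}(A[1],C)\simeq\mathcal{T}(A,C[-1])=0$, whence $h=h'$), which is the rigidity that makes the whole composite canonical and reduces invariance under Yoneda equivalence to the telescoping argument you describe.
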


\begin{lemma}\cite[Lem. 1.9]{lorenzinuniqueness2020}\label{lem: exact<->triang}
Let $\mathcal{T}$ be a triangulated category and $\mathcal{H}\subseteq \mathcal{T}$ be closed under extensions, having a zero object of $\mathcal{T}$ and satisfying that $\mathcal{T}(A, B[-1])=0,$ for any 
$A, B\in \mathcal{H}$. Then,  
$A\mathop{\to}\limits^{f} B\mathop{\to}\limits^{g} C
\mathop{\to}\limits^{h} A[1]$ is a distinguished triangle in
$\mathcal{T}$ if, and only if, $A\mathop{\rightarrowtail}\limits^{f} B
\mathop{\twoheadrightarrow}\limits^{g} C$ is a short exact sequence in $\mathcal{H}$.
\end{lemma}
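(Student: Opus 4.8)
The plan is to pass everything through the exact structure on $\mathcal{H}$ supplied by Dyer's Theorem (Definition~\ref{def: first ext group}): by that definition, a pair $A\xrightarrow{f}B\xrightarrow{g}C$ is a conflation of $\mathcal{H}$ precisely when it can be completed to a distinguished triangle $A\xrightarrow{f}B\xrightarrow{g}C\xrightarrow{h}A[1]$ in $\mathcal{T}$. Under this reading the two implications become almost formal; the only real input beyond the definition is the vanishing hypothesis $\mathcal{T}(A,B[-1])=0$ for $A,B\in\mathcal{H}$, which I will use to certify that the displayed arrows are a genuine monomorphism and epimorphism in $\mathcal{H}$, and hence that $A\rightarrowtail B\twoheadrightarrow C$ really is a short exact sequence.

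For the forward implication, suppose $A\xrightarrow{f}B\xrightarrow{g}C\xrightarrow{h}A[1]$ is a distinguished triangle. Then $(f,g)$ is a conflation by Definition~\ref{def: first ext group}, so I only have to check that $f$ is monic and $g$ is epic in $\mathcal{H}$. To see that $f$ is monic I fix $X\in\mathcal{H}$, rotate to the distinguished triangle $C[-1]\to A\xrightarrow{f}B$, and apply the covariant functor $\mathcal{T}(X,-)$; the resulting long exact sequence identifies $\ker\!\bigl(f_{\ast}\colon\mathcal{T}(X,A)\to\mathcal{T}(X,B)\bigr)$ with the image of $\mathcal{T}(X,C[-1])$, and this group vanishes by hypothesis, so $f_{\ast}$ is injective for every $X$. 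Dually, applying the contravariant functor $\mathcal{T}(-,Y)$ for $Y\in\mathcal{H}$ to the triangle and invoking $\mathcal{T}(A[1],Y)=\mathcal{T}(A,Y[-1])=0$ shows that $g^{\ast}\colon\mathcal{T}(C,Y)\to\mathcal{T}(B,Y)$ is injective for every $Y$, so $g$ is epic. Therefore $A\rightarrowtail B\twoheadrightarrow C$ is a short exact sequence in $\mathcal{H}$.

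For the converse, suppose $A\rightarrowtail B\twoheadrightarrow C$ is a short exact sequence in $\mathcal{H}$; this says exactly that $(f,g)$ is a conflation of the Dyer exact structure. By the definition of that structure in Definition~\ref{def: first ext group}, there exists $h\colon C\to A[1]$ with $A\xrightarrow{f}B\xrightarrow{g}C\xrightarrow{h}A[1]$ a distinguished triangle, which is the desired conclusion.

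The delicate point, and the place where the hypotheses are indispensable, is twofold. First, one must read the map $h$ of the statement as the completion produced by Definition~\ref{def: first ext group}, rather than an arbitrarily prescribed morphism; the long exact sequences above make transparent that any such completion is governed by the same obstruction groups. Second, the exact structure itself is legitimate only because $\mathcal{T}(A,B[-1])=0$ on $\mathcal{H}$: this is precisely what forces the obstruction groups $\mathcal{T}(X,C[-1])$ and $\mathcal{T}(A[1],Y)$ to vanish, so that $f$ and $g$ are mono and epi and $(f,g)$ is a kernel--cokernel pair. I would therefore foreground the vanishing condition as the hypothesis doing all the work, with Definition~\ref{def: first ext group} supplying the dictionary between conflations and distinguished triangles.
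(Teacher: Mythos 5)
The paper does not actually prove this lemma: it is quoted verbatim from [Lem.\ 1.9] of the cited reference, so there is no in-paper argument to compare against. Your proof is correct and is the natural one: the equivalence is essentially Definition~\ref{def: first ext group} read in both directions, with the vanishing hypothesis $\mathcal{T}(A,B[-1])=0$ doing the real work. Two small remarks. First, the two exact sequences you invoke, namely $0=\mathcal{T}(X,C[-1])\to\mathcal{T}(X,A)\to\mathcal{T}(X,B)\to\mathcal{T}(X,C)$ and $0=\mathcal{T}(A[1],Y)\to\mathcal{T}(C,Y)\to\mathcal{T}(B,Y)\to\mathcal{T}(A,Y)$ for $X,Y\in\mathcal{H}$, actually give more than monicity of $f$ and epicity of $g$: they show that $f$ is a kernel of $g$ and $g$ is a cokernel of $f$ in $\mathcal{H}$, which is what ``short exact sequence'' genuinely requires, so it is worth stating the conclusion in that stronger form rather than stopping at mono/epi. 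Second, your caveat about $h$ is exactly right and should be kept: for an arbitrarily prescribed $h$ (e.g.\ $h=0$ over a nonsplit conflation) the backward implication would be false, so the statement must be read existentially in $h$, as you do.
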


With this in mind, it is natural to ask when cut notions in $\mathcal{H}$ can be lifted to
cut ones in $\mathcal{T}$. The following result establishes a relation between
the abelian and triangulated context through the notions previously seen. Recall that, given a triangulated category $\mathcal{T}$, a subcategory $\mcS\subseteq \mathcal{T}$
is \emph{resolving} (resp., coresolving) if it is closed under extensions, has a zero object of $\mathcal{T}$ and 
$\mcS[-1]\subseteq \mcS$ (resp., $\mcS[1]\subseteq \mcS$). Equivalently, $\mcS$ is resolving (resp., coresolving) if, and only if, for any
distinguished triangle $U\to V\to W\to U[1]$ (resp., $W\to V\to U\to W[1]$) in $\mathcal{T}$ with
$W\in \mcS$, one has that $U\in \mcS$ if and only if $V\in \mcS$.

\begin{proposition}\label{pro: ABtheory in H}
For a triangulated category $\mathcal{T}$ having all the Ext groups of $\mathcal{H}\subseteq \mathcal{T}$ with $\mathcal{H}$ closed under extensions and direct summands, the following statements hold true.
\begin{enumerate}
\item Let $(\mcX, \omega)$ be a left Frobenius pair in the exact category $\mathcal{H}.$ Then, 
$(\mcX, \omega)$ is a left Frobenius pair in $\mathcal{T}$ if, and only if, $\mcX[-1]\subseteq \mcX$.
\item Let $(\mcA, \mcB)$ be a left weak AB-context in the exact category $\mathcal{H}.$ Then, 
$(\mcA, \mcB)$ is a left weak AB-context in $\mcT$ if, and only if,
$\mcA[-1]\subseteq \mcA$ and $\mcB[1]\subseteq \mcB$.
\item For any $n\geq 1$, if $(\mcF, \mcG)$ is an $n$-cotorsion pair cut on $\mcS$ in the 
exact category $\mathcal{H},$ then so is in $\mathcal{T}$. 
\end{enumerate}
\end{proposition}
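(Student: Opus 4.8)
The plan is to transfer the defining conditions across the two bridges supplied in the excerpt: Lemma~\ref{lem: exact<->triang}, which identifies the conflations $A\rightarrowtail B\twoheadrightarrow C$ of $\mathcal{H}$ with the distinguished triangles $A\to B\to C\to A[1]$ of $\mathcal{T}$ all of whose vertices lie in $\mathcal{H}$; and Proposition~\ref{pro: A7}, which gives $\mathbb{E}^{i}_{\mathcal{H}}(A,B)=\Ext^{i}_{\mathcal{H}}(A,B)\cong\mathcal{T}(A,B[i])=\mathbb{E}^{i}_{\mathcal{T}}(A,B)$ for all $A,B\in\mathcal{H}$ and $i\geq 1$. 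Two elementary observations make most of the verification automatic. First, since $\mathcal{H}$ is closed under direct summands in $\mathcal{T}$, any subcategory $\mathcal{Z}\subseteq\mathcal{H}$ that is closed under summands in $\mathcal{H}$ is closed under summands in $\mathcal{T}$. Second, by Lemma~\ref{lem: exact<->triang} every $\mathcal{Z}$-resolution in $\mathcal{H}$ is an acyclic $\mathbb{E}$-triangle sequence in $\mathcal{T}$, so $\resdim^{\mathcal{T}}_{\mathcal{Z}}(C)\leq\resdim^{\mathcal{H}}_{\mathcal{Z}}(C)$ and the classes $\mathcal{Z}^{\wedge}$ and $\mathcal{Z}^{\wedge}_{m}$ computed in $\mathcal{H}$ are contained in those computed in $\mathcal{T}$. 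Together with Proposition~\ref{pro: A7}, these transfer from $\mathcal{H}$ to $\mathcal{T}$ every condition of the form: closure under summands; the orthogonality vanishings $\mathbb{E}^{i}(-,-)=0$; the relative (co)generator conflations; $\mcX$-injectivity and $\mcX$-projectivity; and the containments $\mcB\subseteq\mcA^{\wedge}$ and $K\in\mcG^{\wedge}_{n-1}$.

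The only conditions not covered are closure under extensions and under (co)cones, since the middle (resp. third) vertex of a triangle with two vertices in a subcategory of $\mathcal{H}$ need not a priori lie in $\mathcal{H}$, so Lemma~\ref{lem: exact<->triang} cannot be applied to it directly. For extensions this is harmless: in a triangle $A\to B\to C\to A[1]$ with $A,C$ in a subcategory $\mathcal{Z}\subseteq\mathcal{H}$ closed under extensions, one has $B\in\mathcal{H}$ because $\mathcal{H}$ is closed under extensions, whence Lemma~\ref{lem: exact<->triang} applies and $B\in\mathcal{Z}$. For (co)cones I would use the characterization of resolving and coresolving subcategories recalled before the statement. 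In part~(1), once $\mcX$ is known to be closed under extensions in $\mathcal{T}$, the hypothesis $\mcX[-1]\subseteq\mcX$ together with $0\in\mcX$ says exactly that $\mcX$ is resolving in $\mathcal{T}$; applying the equivalent form to a triangle $C\to X\to Y\to C[1]$ with $X,Y\in\mcX$ (so that the hypothesis vertex $W=Y\in\mcX$ and $V=X\in\mcX$) forces $C\in\mcX$, i.e. closure under cocones. Dually, $\mcB[1]\subseteq\mcB$ makes $\mcB$ coresolving in $\mathcal{T}$ and hence closed under cones. For the converse implications, cocone-closure applied to $X[-1]\to 0\to X\xrightarrow{\mathrm{id}}X$ yields $\mcX[-1]\subseteq\mcX$, and cone-closure applied to $B\to 0\to B[1]\xrightarrow{\mathrm{id}}B[1]$ yields $\mcB[1]\subseteq\mcB$.

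With these ingredients the three parts assemble directly. For (1), the forward direction extracts $\mcX[-1]\subseteq\mcX$ from the cocone-closure of a left Frobenius pair in $\mathcal{T}$, and the backward direction verifies (lF1)--(lF3) of Definition~\ref{def: left Frob} in $\mathcal{T}$, the sole nonroutine point being cocone-closure, handled as above. Part~(2) combines part~(1) applied to $(\mcA,\mcA\cap\mcB)$ with the coresolving/cone argument for $\mcB$, the transfer of $\mcB\subseteq\mcA^{\wedge}$, and the transfer of the auxiliary closure conditions on $\omega:=\mcA\cap\mcB$ via the extension argument; its forward direction extracts $\mcA[-1]\subseteq\mcA$ and $\mcB[1]\subseteq\mcB$ from cocone- and cone-closure, respectively. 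Part~(3) is the most direct: conditions (1)--(3) of Definition~\ref{def: CnCP ext} and their duals transfer verbatim through the two observations of the first paragraph and Lemma~\ref{lem: exact<->triang} applied to the defining conflations, whose vertices all lie in $\mathcal{H}$; no shift hypothesis is needed here because no closure under (co)cones is demanded.

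I expect the (co)cone step to be the main obstacle, as it is the only place where the ambient triangulated structure can produce objects outside $\mathcal{H}$; the shift-closure hypotheses are exactly what repairs this, and the clean route is through the resolving/coresolving reformulation rather than through Lemma~\ref{lem: exact<->triang}, which is inapplicable precisely when the relevant vertex is not yet known to lie in $\mathcal{H}$.
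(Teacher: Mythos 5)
Your proposal is correct and follows essentially the same route as the paper's (much terser) proof: extensions and summands transfer because the classes live in $\mathcal{H}$, which is closed under extensions and summands in $\mathcal{T}$; orthogonality transfers via Proposition~\ref{pro: A7}; the defining conflations and (co)resolutions transfer via Lemma~\ref{lem: exact<->triang}; and (co)cone-closure is equivalent to shift-closure through the resolving/coresolving characterization. Your write-up merely makes explicit the details the paper leaves implicit, including the converse shift extraction from the triangles $X[-1]\to 0\to X\to X$ and $B\to 0\to B[1]\to B[1]$.
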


\begin{proof}
Closedness by co(cones) follows by the previous reminder, closedness by extensions in $\mathcal{T}$ holds due to the fact that $\mathcal{H}$ has that property  in 
$\mathcal{T}$ and all the classes considered are contained in $\mathcal{H},$ while closedness by 
summands follows as in the proof of Proposition~\ref{pro: 1cot<->complete cot}. 
On the
other hand, orthogonality
relations between the involved classes are a consequence of Proposition~\ref{pro: A7}. Finally,
from Lemma~\ref{lem: exact<->triang} we get 
the existence of the desired distinguished triangles or (co)resolutions depending on the case. 
\end{proof}

For the following result, we recall that a \emph{co-$t$-structure in a triangulated category
$\mathcal{T}$} consists of a pair of subcategories $\mcX, \mcY\subseteq \mcT$ closed under direct
summands and satisfying that $\mcX[-1]\subseteq \mcX$, $\mathcal{T}(\mcX, \mcY)=0$ and $\mathcal{T}=
\mcX*\mcY$ \cite{bondarko2010weight, pauksztello2008compact}.

\begin{theorem}\label{thm: t-str y cut cot}
Let $\mathcal{T}$ be a triangulated category having all the Ext groups of $\mathcal{H}\subseteq \mathcal{T}$ with $\mathcal{H}$ closed under extensions and direct summands. Consider the following classes:
\[
\begin{array}{l}
\mathfrak{A}:=\{(\mcX, \omega) : (\mcX, \omega) \mbox{ is a left Frobenius pair in the exact category } \mathcal{H} 
\mbox{ and } \mcX[-1]\subseteq \mcX\},\\
\mathfrak{B}:=\{(\mcX, \omega) : (\mcX, \omega) \mbox{ is a left Frobenius pair in } \mathcal{T}\},\\
\mathfrak{C}:=\{(\mcA, \mcB) : (\mcA, \mcB) \mbox{ is a co-$t$-structure in the triangulated category } \Thick(\mcA)\},\\
\mathfrak{D}:=\{(\mcA, \mcB) : (\mcA, \mcB) \mbox{ is a cotorsion pair cut on } \Thick(\mcA) 
\mbox{ in } \mcT \mbox{ with }\mcA, \mcB\subseteq \Thick(\mcA)\}.
\end{array}
\]
Then, $\mathfrak{A}=\mathfrak{B}$, $\mathfrak{C}=\mathfrak{D}$ and there exists a one-to-one correspondence between $\mathfrak{B}$ and $\mathfrak{C}.$ 
\end{theorem}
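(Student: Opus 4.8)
The plan is to derive all three assertions from the correspondence machinery already established, quoting Proposition~\ref{pro: ABtheory in H}, Proposition~\ref{pro: 1cot<->complete cot} and Corollary~\ref{cor: leftFrob<->cotor pairs in Thick} rather than arguing from scratch. Throughout I work with the ambient convention that all the subcategories occurring in $\mathfrak{A},\mathfrak{B}$ are contained in $\mathcal{H}$, and I use three features of the triangulated setting: a thick subcategory of a triangulated category is again triangulated; in the triangulated (extriangulated) structure of $\mcT$ one has $\mathbb{E}(C,A)=\mcT(C,A[1])$ and $\mathbb{E}^{i}(X,Y)=\mcT(X,Y[i])$; and short exact sequences of the exact category $\mathcal{H}$ are exactly the distinguished triangles of $\mcT$ with all terms in $\mathcal{H}$ (Lemma~\ref{lem: exact<->triang}), with the $\Ext$ groups matching by Proposition~\ref{pro: A7}.

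For $\mathfrak{A}=\mathfrak{B}$, the inclusion $\mathfrak{A}\subseteq\mathfrak{B}$ is the forward implication of Proposition~\ref{pro: ABtheory in H}(1). For the reverse, I start from a left Frobenius pair $(\mcX,\omega)$ in $\mcT$ with objects in $\mathcal{H}$. Since $\mcX$ is left thick it contains $0$ and is closed under cocones, and the distinguished triangle $X[-1]\to 0\to X\to X$ exhibits $X[-1]$ as a cocone of objects of $\mcX$, so $\mcX[-1]\subseteq\mcX$. Because the $\mathcal{H}$-conflations form a subclass of the $\mcT$-triangles and the two sets of $\Ext$ groups agree, the defining closure and orthogonality conditions of a left Frobenius pair transfer from $\mcT$ to $\mathcal{H}$; hence $(\mcX,\omega)$ is a left Frobenius pair in $\mathcal{H}$ with $\mcX[-1]\subseteq\mcX$, i.e. $(\mcX,\omega)\in\mathfrak{A}$.

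For $\mathfrak{C}=\mathfrak{D}$, I first apply Proposition~\ref{pro: 1cot<->complete cot} with $\mcS:=\Thick(\mcA)$, which is closed under extensions and direct summands and is therefore triangulated, to identify a cotorsion pair cut on $\Thick(\mcA)$ having $\mcA,\mcB\subseteq\Thick(\mcA)$ with a genuine cotorsion pair in the triangulated category $\Thick(\mcA)$. It then remains to match, inside a triangulated category, cotorsion pairs with co-$t$-structures: in one direction I rotate the two completeness triangles $K\to A\to S\to K[1]$ and $S\to B\to C\to S[1]$ into the truncation triangle witnessing $\Thick(\mcA)=\mcA*\mcB$, and in the other direction I rotate the co-$t$-structure truncation of each object to recover the two conflations of (CP2). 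The passage between the Hom-orthogonality $\mcT(\mcA,\mcB)=0$ of a co-$t$-structure and the Ext-orthogonality $\mathbb{E}(\mcA,\mcB)=\mcT(\mcA,\mcB[1])=0$ of a cotorsion pair is made through the identity $\mcT(\mcA,\mcB[1])=\mcT(\mcA[-1],\mcB)$ together with the shift-closure $\mcA[-1]\subseteq\mcA$.

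Finally, the bijection $\mathfrak{B}\leftrightarrow\mathfrak{C}$ is obtained by composition: using $\mathfrak{A}=\mathfrak{B}$ to regard $\mathfrak{B}$ as the class of left Frobenius pairs in $\mcT$, Corollary~\ref{cor: leftFrob<->cotor pairs in Thick} (applied to the extriangulated category $\mcT$) places these in one-to-one correspondence with cotorsion pairs $(\mcF,\mcG)$ in $\Thick(\mcF)$ satisfying $\id_{\mcF}(\mcG)=0$, via $(\mcX,\omega)\mapsto(\mcX,\omega^{\wedge})$ and $(\mcF,\mcG)\mapsto(\mcF,\mcF\cap\mcG)$; combining this with $\mathfrak{C}=\mathfrak{D}$ yields the assertion. \emph{The main obstacle is exactly the triangulated identification $\mathfrak{C}=\mathfrak{D}$}: one must verify that the feature distinguishing a co-$t$-structure from an arbitrary cotorsion pair in $\Thick(\mcA)$, namely the shift-closure $\mcA[-1]\subseteq\mcA$, is equivalent through the long exact sequences of Lemma~\ref{longExSeq} to the vanishing $\id_{\mcA}(\mcB)=0$ supplied on the Frobenius side, and that the cut-completeness taken over the whole of $\Thick(\mcA)$ enforces this shift-closure. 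Keeping the bookkeeping of $[1]$ and $[-1]$ consistent across the two descriptions, so that the rotated truncation and completeness triangles correspond without an unwanted shift, is where the care is needed.
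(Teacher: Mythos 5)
Your skeleton matches the paper's: $\mathfrak{A}=\mathfrak{B}$ via Proposition~\ref{pro: ABtheory in H}(1), the identification of $\mathfrak{D}$ with genuine cotorsion pairs in the triangulated category $\Thick(\mcA)$ via Proposition~\ref{pro: 1cot<->complete cot}, and the bijection $\mathfrak{B}\leftrightarrow\mathfrak{C}$ by composing with Corollary~\ref{cor: leftFrob<->cotor pairs in Thick}. The treatment of $\mathfrak{A}=\mathfrak{B}$ is fine and in fact fills in the reverse inclusion more explicitly than the paper does.

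The genuine gap is exactly where you locate it, and your proposed way around it does not work. For $\mathfrak{D}\subseteq\mathfrak{C}$ you hope that ``the cut-completeness taken over the whole of $\Thick(\mcA)$ enforces the shift-closure $\mcA[-1]\subseteq\mcA$.'' It does not: if $(\mcT^{\leq 0},\mcT^{\geq 0})$ is a bounded $t$-structure on $\mcT=D^b(\fgMod\Lambda)$, then $(\mcT^{\leq 0},\mcT^{\geq 2})$ is a complete cotorsion pair in $\mcT=\Thick(\mcT^{\leq 0})$ in the sense of Definition~\ref{def: CnCP ext} with $\mcS=\Thick(\mcA)$, yet $\mcT^{\leq 0}[-1]\not\subseteq\mcT^{\leq 0}$; completeness alone is symmetric in the two possible shift directions and cannot single out co-$t$-structures. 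The correct equivalence is the one you state just before (via Lemma~\ref{longExSeq}): $\mcA[-1]\subseteq\mcA$ holds if and only if $\id_{\mcA}(\mcB)=0$, i.e.\ $\mcT(\mcA,\mcB[i])=0$ for all $i\geq 1$, and this vanishing is \emph{not} part of the definition of $\mathfrak{D}$ --- it is only ``supplied on the Frobenius side.'' The paper's proof accordingly never argues from completeness: it restricts throughout to cotorsion pairs $(\mcF,\mcG)$ with $\id_{\mcF}(\mcG)=0$ (the image of the correspondence of Corollary~\ref{cor: leftFrob<->cotor pairs in Thick}) and extracts $\mcF[-1]\subseteq\mcF$ from the closedness of $\mcF$ under cocones, which is carried over from the left-thickness of the Frobenius class. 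To repair your argument you must either import the hypothesis $\id_{\mcA}(\mcB)=0$ into $\mathfrak{D}$ (reading the theorem as the paper's proof actually uses it) or prove it for every element of $\mathfrak{D}$, which the example above shows is impossible. A second, smaller unresolved point in your ``rotation'' paragraph: passing from the Ext-orthogonality $\mcT(\mcA,\mcB[1])=0$ of a cotorsion pair back to the degree-zero Hom-orthogonality $\mcT(\mcA,\mcB)=0$ of a co-$t$-structure requires $\mcA[1]\subseteq\mcA$ or $\mcB[-1]\subseteq\mcB$, neither of which is available; this is part of the $[1]$/$[-1]$ bookkeeping you flag but do not carry out, and it needs to be settled before the two completeness triangles can be matched ``without an unwanted shift.''
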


\begin{proof} The equality
$\mathfrak{A}=\mathfrak{B}$ holds true by Proposition~\ref{pro: ABtheory in H}-(1). 

For the
correspondence between $\mathfrak{B}$ and $\mathfrak{C}$, from Corollary~\ref{cor: leftFrob<->cotor pairs in Thick} it suffices to show that
cotorsion pairs $(\mcF, \mcG)$ in the extriangulated category $\Thick(\mcF)$ with $\id_{\mcF}(\mcG)=0$ are co-$t$-structures in the triangulated category $\Thick(\mcF)$.
On the one hand, notice that the correspondence given in Corollary~\ref{cor: leftFrob<->cotor pairs in Thick} sends
left Frobenius pairs to cotorsion pairs $(\mcF, \mcG)$ with the first class
closed under extensions and cocones in a triangulated category. Since closedness by cocones is equivalent to $\mcF[-1]\subseteq \mcF$ it follows that $(\mcF, \mcG)$ is a co-$t$-structure in the 
triangulated category $\Thick(\mcF)$. On the other hand, if $(\mcF, \mcG)$ is a co-$t$-structure in $\Thick(\mcF)$ is easy to see that
it is a cotorsion pair in $\Thick(\mcF)$ with $\id_{\mcF}(\mcG)=0$.

Finally, by the above paragraph, co-$t$-structures in $\Thick(\mcA)$ are cotorsion pairs in the triangulated category $\Thick(\mcA)$ which coincide with $\mathfrak{D}$ by
Proposition~\ref{pro: 1cot<->complete cot}.
\end{proof}

It is known that for co-$t$-structures there is an associated presilting subcategory $\mcS$ called \emph{the coheart}  \cite{aihara2012silting, keller1988aisles, bondarko2010weight, mendoza2010auslander}. The corollary below is an adaptation of a result for
presilting subcategories given in \cite[Lem. 3.3]{pauksztello2020co}.

\begin{proposition}\label{pro: 1-cot<->cot S*S[1]}
Let $\mcT$ be a triangulated category, $\mcS\subseteq \mathcal{T}$ and let $\mcX, \mcY\subseteq \mathcal{T}$ be closed under extensions and direct summands in $\mathcal{T}$ satisfying that
$\mcS\subseteq \mcX$, $\mcS[1]\subseteq \mcY$ and $\mathcal{T}(\mcX, \mcY[1])=0$. 
Then, the
following statements are equivalent.
\begin{enumerate}
\item $(\mcX, \mcY)$ is a $1$-cotorsion pair cut on $\mcS*\mcS[1]$.
\item $(\mcX, \mcY)$ is a left $1$-cotorsion pair cut on $\mcS[1]$.
\item $(\mcX, \mcY)$ is a right $1$-cotorsion pair cut on $\mcS$.
\end{enumerate}
\end{proposition}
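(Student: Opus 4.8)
The plan is to first strip the problem down to its completeness clauses. Since $\mathcal{T}(\mcX,\mcY[1])=0$ is exactly $\mathbb{E}(\mcX,\mcY)=0$, and in the triangulated case $\mathbb{E}^1=\mathbb{E}$ (Remark~\ref{RkAbTri}), all the orthogonality requirements in Definition~\ref{def: CnCP ext} hold automatically in each of (1), (2), (3): for any subclass $\mathcal{T}'\subseteq\mathcal{T}$ one has $\mathbb{E}(\mcX\cap\mathcal{T}',\mcY)=0$ and $\mathbb{E}(\mcX,\mcY\cap\mathcal{T}')=0$ because $\mcX\cap\mathcal{T}'\subseteq\mcX$ and $\mcY\cap\mathcal{T}'\subseteq\mcY$. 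The closure-under-direct-summands clauses are part of the hypotheses on $\mcX$ and $\mcY$. Hence the three statements differ only in their existence-of-$\mathbb{E}$-triangle (completeness) clauses, and the whole proposition reduces to producing the appropriate $\mathbb{E}$-triangles.

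Next I would dispatch (2)$\Leftrightarrow$(3) by rotation. Given $S\in\mcS$, a right completing $\mathbb{E}$-triangle $S\to Y\to C\dashrightarrow$ (with $Y\in\mcY$, $C\in\mcX$) corresponds to a distinguished triangle $S\to Y\to C\to S[1]$, whose forward rotation $Y\to C\to S[1]\to Y[1]$ reads as the $\mathbb{E}$-triangle $Y\to C\to S[1]\dashrightarrow$, i.e. a left completing $\mathbb{E}$-triangle for $S[1]\in\mcS[1]$; conversely, the backward rotation of a left completing $\mathbb{E}$-triangle $K\to X\to S[1]\dashrightarrow$ is the right completing $\mathbb{E}$-triangle $S\to K\to X\dashrightarrow$. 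So left completeness on $\mcS[1]$ and right completeness on $\mcS$ are literally the same family of triangles read in two directions.

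The heart of the argument is (2)/(3)$\Rightarrow$(1). Fix $M\in\mcS*\mcS[1]$, with $\mathbb{E}$-triangle $S_1\to M\to S_2[1]\dashrightarrow$ where $S_1,S_2\in\mcS$. For left completeness I would take the left completing $\mathbb{E}$-triangle $K_2\to X_2\to S_2[1]\dashrightarrow$ of $S_2[1]$ supplied by (2) and feed it, together with $S_1\to M\to S_2[1]\dashrightarrow$, into Proposition~\ref{Nakaoka 3.15} (both triangles ending in $S_2[1]$). This yields an object $X$ fitting into $\mathbb{E}$-triangles $S_1\to X\to X_2\dashrightarrow$ and $K_2\to X\to M\dashrightarrow$; the first gives $X\in\mcX$ since $\mcX$ is closed under extensions and $S_1\in\mcS\subseteq\mcX$, $X_2\in\mcX$, while the second is the desired left completion of $M$ (with $K_2\in\mcY$). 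Dually, taking the right completing $\mathbb{E}$-triangle $S_1\to Y_1\to D_1\dashrightarrow$ of $S_1$ from (3) and applying the dual of Proposition~\ref{Nakaoka 3.15} to it and $S_1\to M\to S_2[1]\dashrightarrow$ (both starting in $S_1$) produces $Y$ fitting into $Y_1\to Y\to S_2[1]\dashrightarrow$ and $M\to Y\to D_1\dashrightarrow$; the former forces $Y\in\mcY$ (as $Y_1\in\mcY$, $S_2[1]\in\mcS[1]\subseteq\mcY$, $\mcY$ closed under extensions) and the latter is the right completion of $M$. This is the step I expect to be the main obstacle, essentially the octahedral bookkeeping needed to ensure the correct outer terms land in $\mcX$ and $\mcY$.

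Finally, (1)$\Rightarrow$(2) and (1)$\Rightarrow$(3) are restrictions, once one records the inclusions $\mcS\subseteq\mcS*\mcS[1]$ and $\mcS[1]\subseteq\mcS*\mcS[1]$, witnessed by the split $\mathbb{E}$-triangles $S\to S\to 0\dashrightarrow$ and $0\to S[1]\to S[1]\dashrightarrow$ (using $0\in\mcS$): the completeness granted by (1) then specializes on $\mcS$ to the clause of (3) and on $\mcS[1]$ to the clause of (2). Combining the restriction maps with the rotation equivalence (2)$\Leftrightarrow$(3) and the construction (2)/(3)$\Rightarrow$(1) closes the cycle and proves the three statements equivalent.
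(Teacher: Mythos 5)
Your proof is correct and follows essentially the same route as the paper's: the reduction to the completeness clauses, the rotation argument for (2)$\Leftrightarrow$(3), and the construction for (2)/(3)$\Rightarrow$(1) via Proposition~\ref{Nakaoka 3.15} and its dual is exactly the octahedral diagram the paper writes out. You are in fact slightly more explicit than the paper, which leaves the restrictions (1)$\Rightarrow$(2),(3) implicit; note only that your appeal to $0\in\mcS$ there (to get $\mcS\cup\mcS[1]\subseteq\mcS*\mcS[1]$) is a harmless convention the paper also needs but does not state.
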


\begin{proof}
We prove that (2)$\Rightarrow$(3).
Suppose that $(\mcX, \mcY)$ is a left $1$-cotorsion pair cut on $\mcS[1]$. Under the above conditions it suffices to prove that, 
for every $M\in \mcS,$ there is a distinguished triangle $M\to Y\to X\to M[1],$ with $Y\in \mcY$ and
$X\in \mcX$. However, that fact follows by the left completeness of the pair $(\mcX, \mcY)$ in
Definition~\ref{def: CnCP ext}. From similar reasons, one can justify (3)$\Rightarrow$(2).

For (2)$\Rightarrow$(1), suppose that (2) holds. We assert that $(\mcX, \mcY)$ is a left $1$-cotorsion pair cut on $S*\mcS[1]$. Notice that under the above conditions, we only need to show that 
for every $M\in \mcS*\mcS[1]$ there is 
a distinguished triangle $M[-1]\to Y\to X\to M,$ with $Y\in \mcY$ and
$X\in \mcX$. Indeed, let $M\in \mcS*\mcS[1]$. Then,
there exist two distinguished triangles $S\to M\to R\to S[1]$ and $Y\to X'\to R\to Y[1]$ with $S\in \mcS, R\in \mcS[1], X'\in \mcX$ and $Y\in \mcY$. From the octahedral axiom, we get the 
following commutative diagram of distinguished triangles
\[
\begin{tikzpicture}[description/.style={fill=white,inner sep=2pt}] 
\matrix (m) [matrix of math nodes, row sep=2.5em, column sep=2.5em, text height=1.25ex, text depth=0.25ex] 
{ 
{} & Y & Y & {}\\
S & X & X' & S[1]  \\
S & M & R  & S[1] \\
{} & Y[1] & Y[1], & {}\\
};  
\path[->] 
(m-2-3) edge (m-2-4) (m-3-3) edge (m-3-4)
(m-3-2) edge (m-4-2) (m-3-3) edge (m-4-3)
(m-2-1) edge (m-2-2) (m-2-2) edge (m-2-3) 
(m-3-1) edge (m-3-2) (m-3-2) edge (m-3-3) 
(m-1-2) edge (m-2-2) (m-2-2) edge (m-3-2)
(m-1-3) edge (m-2-3) (m-2-3) edge (m-3-3)
; 
\path[-,font=\scriptsize]
(m-2-4) edge [double, thick, double distance=2pt] (m-3-4)
(m-4-2) edge [double, thick, double distance=2pt] (m-4-3)
(m-1-2) edge [double, thick, double distance=2pt] (m-1-3)
(m-2-1) edge [double, thick, double distance=2pt] (m-3-1)
;
\end{tikzpicture} 
\]
where $S\to X\to X'\to S[1]$ is a distinguished triangle with $X\in \mcX$ since $\mcS\subseteq \mcX$ and
$\mcX$ is closed under extensions. Hence, by considering the distinguished triangle
$M[-1]\to Y\to X\to M$ obtained from
$Y\to X\to M\to Y[-1]$ the assertion follows. Now, by using the equivalence (2)$\Leftrightarrow$(3) and
procceding as above we also get that $(\mcX, \mcY)$ is a right $1$-cotorsion pair
cut on $\mcS*\mcS[1]$. Hence, (1) holds true.
\end{proof}

A remarkable difference between $t$-structures and co-$t$-structures is that 
the coheart of a co-$t$-structure in general may not be abelian. However, 
\emph{the extended coheart} of a co-$t$-structure is an extriangulated category with 
the extriangulated  structure induced by the triangulated structure of $\mathcal{T}$ \cite[Lem. 1.6]{pauksztello2020co}. Moreover, given a co-$t$-structure in a triangulated category, there
is a one-to-one correspondence with co-$t$-structures ``sufficiently close'' to the initial one 
and cotorsion pairs in its extended coheart \cite[Thm. 2.1]{pauksztello2020co}. By using an alternative description of the extended coheart \cite[Lem. 2.1]{iyama2014intermediate}, we get the following result.

\begin{theorem}\label{thm: co-t-st y cut cot}
Let $\mcT$ be a triangulated category, $(\mcA, \mcB)$ be a co-$t$-structure in $\mcT$, $\mcS:=\mcA[1]\cap \mcB$ be the coheart of $(\mcA, \mcB)$ and 
$\mathcal{Z}:=\mcA[2]\cap \mcB$ be the extended coheart of $(\mcA, \mcB)$. Consider the following classes:

\[
\begin{array}{l}
\mathfrak{A}:=\{(\mcA', \mcB') : (\mcA',\mcB') \mbox{ is a co-$t$-structure in }
\mcT \mbox{ with } \mcA\subseteq \mcA'\subseteq \mcA[1]\},\\
\mathfrak{B}:=\{(\mcX, \mcY) : (\mcX,\mcY) \mbox{ is a cotorsion pair in the extriangulated category }
\mathcal{Z}\},\\
\mathfrak{C}:=\{(\mcX, \mcY) : (\mcX,\mcY) \mbox{ is a $1$-cotorsion pair cut on }
\mathcal{Z} \mbox{ in } \mcT \mbox{ with } \mcX, \mcY\subseteq \mathcal{Z}\},\\
\mathfrak{D}:=\{(\mcX, \mcY) : (\mcX,\mcY) \mbox{ is a left $1$-cotorsion pair cut on }
\mcS[1] \mbox{ in } \mcT \mbox{ with } \mcX, \mcY\subseteq \mathcal{Z}\},\\
\mathfrak{E}:=\{(\mcX, \mcY) : (\mcX,\mcY) \mbox{ is a right $1$-cotorsion pair cut on }
\mcS \mbox{ in } \mcT \mbox{ with } \mcX, \mcY\subseteq \mathcal{Z}\}.\\
\end{array}
\]
Then, there is a one-to-one correspondence between $\mathfrak{A}$ and $\mathfrak{B}$ and
$\mathfrak{B}=\mathfrak{C}=\mathfrak{D}=\mathfrak{E}$.
\end{theorem}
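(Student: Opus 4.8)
The plan is to reduce the whole statement to the two transfer results already established, Proposition~\ref{pro: 1cot<->complete cot} and Proposition~\ref{pro: 1-cot<->cot S*S[1]}, after pinning down the extended coheart as a $*$-product. First I would record the structural identity $\mathcal{Z}=\mcS*\mcS[1]$, which is the alternative description of the extended coheart from \cite[Lem. 2.1]{iyama2014intermediate}. The inclusion $\mcS*\mcS[1]\subseteq\mathcal{Z}$ is direct: from $\mcA[-1]\subseteq\mcA$ and $\mcB[1]\subseteq\mcB$ one gets $\mcS\subseteq\mcA[1]\subseteq\mcA[2]$ and $\mcS[1]\subseteq\mcA[2]\cap\mcB[1]\subseteq\mcA[2]\cap\mcB=\mathcal{Z}$, and both $\mcA[2]$ and $\mcB$ are closed under extensions. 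At the same time $\mathcal{Z}=\mcA[2]\cap\mcB$ is closed under extensions and direct summands in $\mcT$, so $(\mathcal{Z},\mathbb{E}_{\mathcal{Z}},\mathfrak{s}_{\mathcal{Z}})$ is an extriangulated category with $\mathbb{E}_{\mathcal{Z}}(-,-)=\mcT(-,-[1])$ restricted to $\mathcal{Z}$. With this in place the one-to-one correspondence between $\mathfrak{A}$ and $\mathfrak{B}$ is precisely \cite[Thm. 2.1]{pauksztello2020co}, matching co-$t$-structures $(\mcA',\mcB')$ with $\mcA\subseteq\mcA'\subseteq\mcA[1]$ to cotorsion pairs in $\mathcal{Z}$.

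Next I would obtain $\mathfrak{B}=\mathfrak{C}$ as a verbatim instance of Proposition~\ref{pro: 1cot<->complete cot}, with $\mathcal{Z}$ in the role of the ambient subcategory there: a pair $(\mcX,\mcY)$ with $\mcX,\mcY\subseteq\mathcal{Z}$ is a cotorsion pair in the extriangulated category $\mathcal{Z}$ if and only if it is a $1$-cotorsion pair cut on $\mathcal{Z}$ in $\mcT$. For $\mathfrak{C}=\mathfrak{D}=\mathfrak{E}$ I would invoke Proposition~\ref{pro: 1-cot<->cot S*S[1]} with its $\mcS,\mcX,\mcY$ specialized to the coheart and the given components, using $\mcS*\mcS[1]=\mathcal{Z}$, so that its conditions $(1)$, $(2)$, $(3)$ are exactly membership in $\mathfrak{C}$, $\mathfrak{D}$, $\mathfrak{E}$. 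The forward inclusions $\mathfrak{C}\subseteq\mathfrak{D}$ and $\mathfrak{C}\subseteq\mathfrak{E}$ are immediate once one checks that a member of $\mathfrak{C}=\mathfrak{B}$, being a cotorsion pair in $\mathcal{Z}$, satisfies the standing hypotheses of Proposition~\ref{pro: 1-cot<->cot S*S[1]}: closure under extensions and direct summands comes from $\mcX={}^{\perp_1}\mcY$ and $\mcY=\mcX^{\perp_1}$ computed in $\mathcal{Z}$; the orthogonality $\mcT(\mcX,\mcY[1])=\mathbb{E}_{\mathcal{Z}}(\mcX,\mcY)=0$ is (CP1); and $\mcS\subseteq\mcX$, $\mcS[1]\subseteq\mcY$ follow from the positional vanishing $\mcT(\mcA[i],\mcB[i])\cong\mcT(\mcA,\mcB)=0$. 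Indeed, for $S\in\mcS\subseteq\mcA[1]$ and $Y\in\mcY\subseteq\mcB$ one has $Y[1]\in\mcB[1]$, hence $\mcT(S,Y[1])=0$ and $S\in{}^{\perp_1}\mcY\cap\mathcal{Z}=\mcX$; dually, for $S'\in\mcS[1]\subseteq\mcA[2]\cap\mcB[1]$ and $X\in\mcX\subseteq\mcA[2]$ one has $S'[1]\in\mcB[2]$, so $\mcT(X,S'[1])=0$ and $S'\in\mcX^{\perp_1}\cap\mathcal{Z}=\mcY$.

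The hard part will be the reverse inclusions $\mathfrak{D}\subseteq\mathfrak{C}$ and $\mathfrak{E}\subseteq\mathfrak{C}$. For a member of $\mathfrak{D}$ the cut axioms supply only the partial orthogonality $\mcT(\mcX\cap\mcS[1],\mcY[1])=0$ together with a one-sided completeness over $\mcS[1]$, whereas running Proposition~\ref{pro: 1-cot<->cot S*S[1]} backwards needs the full standing hypotheses, in particular $\mcT(\mcX,\mcY[1])=0$ and $\mcS\subseteq\mcX$, $\mcS[1]\subseteq\mcY$; none of these is forced positionally, since in a co-$t$-structure $\mcT(\mcA[2],\mcB[1])\cong\mcT(\mcA,\mcB[-1])$ need not vanish, and feeding a presentation $S_1\to X\to S_2[1]\to S_1[1]$ of $X\in\mcX\subseteq\mcS*\mcS[1]$ into the sequence $\mcT(-,\mcY[1])$ kills the $\mcS$-term but leaves an uncontrolled $\mcS[1]$-term. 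I expect the clean way to close this gap is to route the reverse inclusions through the bijection $\mathfrak{A}\leftrightarrow\mathfrak{B}=\mathfrak{C}$: the left-completeness over $\mcS[1]$ recorded by a member of $\mathfrak{D}$ is exactly the approximation datum that \cite[Thm. 2.1]{pauksztello2020co} uses to reconstruct a co-$t$-structure $(\mcA',\mcB')$ with $\mcA\subseteq\mcA'\subseteq\mcA[1]$, whence the pair lies in the image of the correspondence and hence in $\mathfrak{C}$; once it is a cotorsion pair in $\mathcal{Z}$, the full orthogonality and both-sided completeness come for free, and the three-way equivalence of Proposition~\ref{pro: 1-cot<->cot S*S[1]} then matches on the nose. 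This reconstruction of the ambient co-$t$-structure from the coheart-restricted, one-sided cut data is the delicate step, and it is precisely where the identity $\mathcal{Z}=\mcS*\mcS[1]$ and the strong vanishing $\mcT(\mcA[i],\mcB[i])=0$ carry the argument.
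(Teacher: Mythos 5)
Your first two steps coincide with the paper's own proof: the bijection $\mathfrak{A}\leftrightarrow\mathfrak{B}$ is quoted from \cite[Thm. 2.1]{pauksztello2020co}, the equality $\mathfrak{B}=\mathfrak{C}$ is Proposition~\ref{pro: 1cot<->complete cot} applied to the extension-closed, summand-closed subcategory $\mathcal{Z}$, and the remaining equalities are reduced to Proposition~\ref{pro: 1-cot<->cot S*S[1]} via the identification $\mathcal{Z}=\mcS*\mcS[1]$ of \cite[Lem. 2.1]{iyama2014intermediate}. Your explicit verification that a member of $\mathfrak{B}$ satisfies the standing hypotheses of Proposition~\ref{pro: 1-cot<->cot S*S[1]} (closure under extensions and summands from $\mcX={}^{\perp_1}\mcY\cap\mathcal{Z}$ and $\mcY=\mcX^{\perp_1}\cap\mathcal{Z}$, and $\mcS\subseteq\mcX$, $\mcS[1]\subseteq\mcY$ from the positional vanishing) is more detailed than anything the paper writes down and handles the forward inclusions $\mathfrak{C}\subseteq\mathfrak{D}$, $\mathfrak{C}\subseteq\mathfrak{E}$ correctly.

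The divergence, and the gap, is in the reverse inclusions $\mathfrak{D}\subseteq\mathfrak{C}$ and $\mathfrak{E}\subseteq\mathfrak{C}$. The paper simply applies Proposition~\ref{pro: 1-cot<->cot S*S[1]}; that is, it reads $\mathfrak{C}$, $\mathfrak{D}$, $\mathfrak{E}$ as ranging over pairs already subject to that proposition's standing hypotheses (closed under extensions and direct summands, $\mcS\subseteq\mcX$, $\mcS[1]\subseteq\mcY$, $\mcT(\mcX,\mcY[1])=0$), with the single extra condition $\mcX,\mcY\subseteq\mathcal{Z}$ appended to each of its statements (1)--(3). You instead take the definitions of $\mathfrak{D}$ and $\mathfrak{E}$ literally, observe (correctly) that the bare cut axioms do not force those hypotheses --- indeed $\mcT(\mcA[2],\mcB[1])\cong\mcT(\mcA[1],\mcB)$ need not vanish --- and then propose to recover membership in $\mathfrak{C}$ by reconstructing a co-$t$-structure $(\mcA',\mcB')$ with $\mcA\subseteq\mcA'\subseteq\mcA[1]$ from the one-sided approximation data. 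That reconstruction is asserted, not carried out: ``the pair lies in the image of the correspondence'' is exactly the claim that needs proof, and you yourself flag it as the delicate step. As written your argument therefore establishes $\mathfrak{B}=\mathfrak{C}\subseteq\mathfrak{D}\cap\mathfrak{E}$ but not the converse inclusions. The fix is either to adopt the paper's reading --- build the standing hypotheses of Proposition~\ref{pro: 1-cot<->cot S*S[1]} into the definitions of $\mathfrak{C}$, $\mathfrak{D}$, $\mathfrak{E}$, after which the equalities are an immediate instance of that proposition --- or to supply the reconstruction of $(\mcA',\mcB')$ in full; your criticism of the statement's phrasing is fair, but it does not substitute for the missing half of the equivalence.
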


\begin{proof}
The correspondence between $\mathfrak{A}$ and $\mathfrak{B}$ follows by \cite[Thm. 2.1]{pauksztello2020co}.
On the other hand, we have from Proposition~\ref{pro: 1cot<->complete cot} that
the equality $\mathfrak{B}=\mathfrak{C}$ holds true. The remaining equalities hold after noticing that $\mathcal{Z}=\mcS*\mcS[1]$ \cite[Lem. 2.1]{iyama2014intermediate} and the equivalence in Proposition~\ref{pro: 1-cot<->cot S*S[1]} is also valid if we add the condition $\mcX, \mcY\subseteq \mathcal{Z}$ in every
statement. 
\end{proof}


\section*{\textbf{Acknowledgements}}


\section*{\textbf{Funding}}

The first author thanks Programa de Becas Posdoctorales DGAPA-UNAM.


\bibliographystyle{plain}
\bibliography{bibliohmp17}
\end{document}